\documentclass[final]{siamltex}
\usepackage{epsfig}
\usepackage{amsmath}
\usepackage{amsfonts}
\usepackage{algorithm}
\usepackage{algorithmic}
\usepackage{graphicx}
\usepackage{subfigure}

\bibliographystyle{plain}

\title{ A Subspace Method for Large-Scale Eigenvalue Optimization }
\author{
	Fatih Kangal\footnotemark[1]	
	\and Karl Meerbergen\footnotemark[2]
   	\and Emre Mengi\footnotemark[3]
   	\and Wim Michiels\footnotemark[2]
}

\begin{document}
\maketitle

\renewcommand{\thefootnote}{\fnsymbol{footnote}}
\footnotetext[1]{Department of Mathematics, Ko\c{c} University, Rumelifeneri Yolu, 34450 Sar\i yer-\.{I}stanbul, Turkey (fkangal@ku.edu.tr).}
\footnotetext[2]{Department of Computer Science, KU~Leuven, University of Leuven, 3001 Heverlee, Belgium 
(Karl.Meerbergen@cs.kuleuven.be, Wim.Michiels@cs.kuleuven.be).
The work of the authors were supported by OPTEC, the Optimization in Engineering Center of KU~Leuven, the Research Foundation Flanders (project G.0712.11N) and by the project UCoCoS, funded  by the 
European Union's Horizon 2020 research and innovation programme 
under the Marie Sklodowska-Curie Grant Agreement
No 675080.}
\footnotetext[3]{Department of Mathematics, Ko\c{c} University, Rumelifeneri Yolu, 34450 Sar\i yer-\.{I}stanbul, Turkey (emengi@ku.edu.tr).
The work of the author was supported in part by the European Commision grant
PIRG-GA-268355, TUBITAK - FWO (Scientific and Technological Research Council of Turkey -
Belgian Research Foundation, Flanders) joint grant 113T053, and by the BAGEP program of Turkish 
Academy of Science.}

\begin{abstract}
\noindent
We consider the minimization or maximization of the $J$th largest eigenvalue of 
an analytic and Hermitian matrix-valued function, and build on  
Mengi \emph{et al.} (2014, \emph{SIAM J. Matrix Anal. Appl.}, 35, 699-724).  
This work addresses the setting when the matrix-valued function involved is 
very large. We describe subspace procedures that convert the original problem into a small-scale 
one by means of orthogonal projections and restrictions to certain subspaces,
and that gradually expand these subspaces based on the optimal solutions of small-scale 
problems. Global convergence and superlinear rate-of-convergence results with respect to the dimensions of 
the subspaces are presented in the infinite dimensional setting, where  the matrix-valued function is 
replaced by a compact operator depending on parameters. In practice, it suffices to solve eigenvalue 
optimization problems involving matrices with sizes on the scale of tens, instead of the original 
problem involving matrices with sizes on the scale of thousands. 
\\

\noindent
\textbf{Key words.}
Eigenvalue optimization, large-scale, orthogonal projection, eigenvalue perturbation theory, parameter dependent compact operator, matrix-valued function
\\

\noindent
\textbf{AMS subject classifications.} 65F15, 90C26, 47B37, 47B07

\end{abstract}

\pagestyle{myheadings}
\thispagestyle{plain}
\markboth{ F. KANGAL, K. MEERBERGEN, E. MENGI, AND W. MICHIELS }{ LARGE-SCALE EIGENVALUE OPTIMIZATION }

\section{Introduction}
We are concerned with the global optimization problems 
\[
	\textbf{(MN)} \;\;
	{\rm minimize} \:
	\{ \lambda_J(\omega)	\; | \;	{\omega \in \Omega}	\}
			\;\;\;\;\;\;\;	{\rm and}		\;\;\;\;\;\;\;
	\textbf{(MX)} \;\;
	{\rm maximize} \:
	\{ \lambda_J(\omega)	\; | \;	{\omega \in \Omega}	\}.
\]
The feasible region $\Omega$ of these optimization problems is a compact subset of ${\mathbb R}^d$.
Furthermore, letting  $\ell^2({\mathbb N})$ denote the sequence space consisting of square summable infinite sequences
of complex numbers equipped with the inner product $\langle w, v \rangle = \sum_{k = 0}^\infty \overline{w_k} \: v_k$ as well as 
the norm $\| v \|_2 = \sqrt{  \sum_{k = 0}^\infty | v_k |^2  }$,
the objective function $\lambda_J(\omega)$ is the $J$th largest eigenvalue of a compact self-adjoint operator 
\begin{equation}\label{eq:mat_func}
	{\mathbf A}(\omega) : \ell^2({\mathbb N})	\rightarrow  \ell^2({\mathbb N}),	\quad
	{\mathbf A}(\omega)	:=	\sum_{\ell = 1}^\kappa		f_\ell(\omega)	{\mathbf A}_\ell		
\end{equation}
for every $\omega \in \overline{\Omega}$, an open subset of ${\mathbb R}^d$ containing the feasible region $\Omega$.
Above ${\mathbf A}_\ell : \ell^2({\mathbb N}) \rightarrow \ell^2({\mathbb N})$ and $f_\ell : \overline{\Omega} \rightarrow {\mathbb R}$ 
for $\ell = 1,\dots,\kappa$ represent given compact self-adjoint operators and real-analytic functions, respectively.
Throughout the text, ${\mathbf A}(\omega)$ for each $\omega$ and ${\mathbf A}_1, \dots, {\mathbf A}_\kappa$ as in (\ref{eq:mat_func})
could intuitively be considered as infinite dimensional Hermitian matrices.

Our interest in the infinite dimensional eigenvalue optimization problems \textbf{(MN)} and \textbf{(MX)}
rise from their finite dimensional counterparts, which for given Hermitian matrices $A_\ell \in {\mathbb C}^{n\times n}$ 
for $\ell = 1, \dots, \kappa$ involve the matrix-valued function
\begin{equation}\label{eq:mat_func2}
	A(\omega)	:= 	\sum_{\ell = 1}^\kappa		f_\ell(\omega)	A_\ell
\end{equation}
instead of the parameter dependent operator ${\mathbf A}(\omega)$. These problems come with
standard challenges due to their nonsmoothness and nonconvexity.
But we would like to tackle a different challenge, when the matrices in them are very large, that is $n$ is very large. 
Thus, the primary purpose of this paper is to deal with large-dimensionality, it does not address the inherent difficulties due to 
nonconvexity and nonsmoothness. We introduce the ideas in the idealized infinite dimensional setting, only because 
this makes a rigorous convergence analysis possible.

To deal with large dimensionality, we propose restricting the domain and projecting the range of the map 
$v \mapsto {\mathbf A}(\omega) v$ to small subspaces. This gives rise to eigenvalue optimization problems involving 
small matrices, which we call reduced problems. Two greedy procedures are presented here to construct small 
subspaces so that the optimal solution of the reduced problem is close to the optimal solution of the original problem. 
For both procedures, we observe a superlinear rate of decay in the error with respect to the subspace dimension. 
The first procedure is more straightforward and constructs smaller subspaces, shown to converge
at a superlinear rate when $d=1$, but lacks a complete formal argument 
justifying its quick convergence when $d \geq 2$. The second constructs larger subspaces, but comes with a formal 
proof of superlinear convergence for all $d$.

While the proposed procedures operate on \textbf{(MN)} and \textbf{(MX)} similarly, 
there are remarkable differences in their convergence behaviors in these two contexts. 
The proposed subspace restrictions and projections on the map $v \mapsto {\mathbf A}(\omega) v$ 
lead to global lower envelopes for $\lambda_J(\omega)$. These lower envelopes in turn make the
convergence to the globally smallest value of $\lambda_J(\omega)$ possible as the dimensions
of the subspaces by the procedures grow to infinity, which we prove formally. Such a global convergence 
behavior does not hold for the maximization problem: if the subspace dimension is let grow
to infinity, the globally maximal values of the reduced problems converge to a locally maximal
value of $\lambda_J(\omega)$, that is not necessarily globally maximal. 
But the maximization problem possesses a remarkable
low-rank property: there exists a $J$ dimensional subspace such that when the map
 $v \mapsto {\mathbf A}(\omega) v$ is restricted and projected to this subspace, the
 resulting reduced problem has the same globally largest value as $\lambda_J(\omega)$.
 The minimization problem does not enjoy an analogous low-rank property.

\subsection{Motivation}
Large eigenvalue optimization problems arise from various applications.
For instance, the distance to instability from a large stable matrix with respect to the matrix 2-norm
yields large singular value optimization problems \cite{VanLoan1985}, that can be converted
into large eigenvalue optimization problems. The computation of the H-infinity norm of the transfer function of a linear
time-invariant (LTI) control system can be considered as a generalization of the computation of the distance to instability. 
This is a norm for the operator that 
maps inputs of the LTI system into outputs, and plays a major role in robust control. The singular value optimization characterization for the H-infinity
norm involves large matrices if the input, output or, in particular, the intermediate state space have large dimension.
The state-of-the-art algorithms for H-infinity norm computation \cite{Boyd1990, Bruinsma1990} cannot 
cope with such large-scale control systems.
In engineering applications,  the largest eigenvalue of a matrix-valued function is often sought 
to be minimized. A particular application is the numerical scheme for the design of the strongest 
column subject to volume constraints \cite{Cox1992}, where the sizes of the matrices depend on the 
fineness of a discretization imposed on a differential operator. These matrices can be very 
large if a fine grid is employed. The standard semidefinite program (SDP) formulations received a lot of attention 
by the convex optimization community since the 1990s \cite{Vandenberghe1996}. They concern
the optimization over the cone of symmetric positive semidefinite matrices of a linear objective function 
subject to linear constraints.  The dual problem of an SDP under mild assumptions can be cast as an 
eigenvalue optimization problem \cite{Helmberg2000}. If the size of the matrix variable of an SDP is large, 
the associated eigenvalue optimization problem involves large matrices. The current SDP solvers are 
usually not suitable to deal with such large-scale problems.

\subsection{Literature}
Subspace projections and restrictions have been applied to particular eigenvalue optimization problems in the past.
But general procedures such as the ones in this paper have not been proposed and studied thoroughly. 
A subspace restriction idea has been employed specifically for the computation of the pseudospectral abscissa 
of a matrix in \cite{Kressner2014}. This computational problem involves the optimization of a linear objective function
subject to a constraint on the smallest singular value of an affine matrix-valued function. Fast convergence is observed 
in that work, and confirmed with a superlinear rate-of-convergence result.  
In the context of standard semidefinite programs, subspace methods have been used for quite a while, in particular
the spectral bundle method \cite{Helmberg2000} is based on subspace ideas. The small-scale optimization problems 
resulting from subspace restrictions and projections are solved by standard SDP solvers \cite{Vandenberghe1996}.
A thorough convergence analysis for them has not been performed, also their efficiency is not fully realized
in practice. Extensions for convex quadratic SDPs \cite{Lin2012} and linear matrix inequalities \cite{Miller2000}
have been considered. All these large-scale problems connected to SDPs are convex. 
We present unified procedures and their convergence analyses that are applicable regardless of whether 
the problem is convex or nonconvex.


\subsection{Spectral Properties and Operator Norm of ${\mathbf A}(\omega)$}
The spectrum of ${\mathbf A}(\omega)$ is defined by
\[
	\sigma	\left( {\mathbf A}(\omega) \right)	\;	:=	\; 	\{ z \in {\mathbb R}	\; |	\; zI - {\mathbf A}(\omega) \; \text{ is not invertible}	\}.
\]
This set contains countably many real eigenvalues each with a finite multiplicity, also the only accumulation 
point of these eigenvalues is 0 (see \cite[page 185, Theorem 6.26]{Kato1995}). We assume, throughout the text, that
${\mathbf A}(\omega)$ has at least $J$ positive eigenvalues for all $\omega \in \Omega$\footnote[3]{Recall that our set-up is motivated by the finite-dimensional 
case (large matrices).  There the corresponding assumption can be made without loosing generality: instead of the optimization of the $J$th largest eigenvalue 
of $A(\omega)$, one may equivalently consider the optimization of the $J$th largest eigenvalue of $A(\omega) + \tau I$ for $\tau\geq 0$ sufficiently large,  
since the added term only introduces a shift of $\tau$ of the eigenvalues, of both the original problems and the problems obtained by orthogonal projection.}.
This ensures that $\lambda_J(\omega)$ is well-defined over $\Omega$.


The eigenspace associated with each eigenvalue of ${\mathbf A}(\omega)$ is also finite dimensional \cite[Corollary 6.44]{Kubrusly2001}. 
Furthermore, the set of eigenvectors of ${\mathbf A}(\omega)$ can be chosen in a way so that they are orthonormal
and complete in $\ell^2({\mathbb N})$ \cite[Theorem 4.2.23]{Davies2007}, i.e., there exist an orthonormal sequence $\{ v^{(j)} \}$ in $\ell^2({\mathbb N})$
and a sequence $\{ \mu^{(j)} \}$ of real numbers such that ${\mathbf A}(\omega) v^{(j)} = \mu^{(j)} v^{(j)}$ for all $j = 1,2,\dots$, 
$\mu^{(j)} \rightarrow 0$ as $j \rightarrow \infty$ and any vector $v \in \ell^2({\mathbb N})$ can be expressed as 
$v = \sum_{j = 1}^\infty \alpha_j v^{(j)}$ for some scalars $\alpha_j$.

Since ${\mathbf A}_1, \dots, {\mathbf A}_\kappa$ and ${\mathbf A}(\omega)$ are compact, they are bounded.
Hence, their operator norms 
\begin{align*}
	\| {\mathbf A}_\ell \|_2 \; := \; {\rm sup} \left\{ \| {\mathbf A}_\ell v \|_2	\; | \; v \in \ell^2({\mathbb N}) \text{ such that } \| v \|_2 = 1	\right\}
	\text{  for } \ell = 1, \dots, \kappa		&	\;\;\; \text{  and  }	\\
	 \| {\mathbf A}(\omega) \|_2 \; := \; {\rm sup} \left\{ \| {\mathbf A}(\omega) v \|_2	\; | \; v \in \ell^2({\mathbb N}) \text{ such that } \| v \|_2 = 1	\right\}	
	 \;\;\;\;\;\;\;\;\;\;\;\;\;\;\;\;\;\;   &
\end{align*}
are also well-defined.

\subsection{Outline}
We formally define the reduced problems, then analyze the relation between the original 
and the reduced problems in the next section. Some of this analysis, in particular the low-rank
property discussed above, apply only to the maximization problem. Last two subsections
of the next section are devoted to the introduction of the two subspace procedures for
eigenvalue optimization. Interpolation properties between the original problem and the
reduced problems formed by these two subspace procedures are also investigated there.
Section \ref{sec:convergence} concerns the convergence analysis of the two subspace procedures.
In particular, Section \ref{sec:global_convergence} establishes the convergence of the subspace procedures
globally to the smallest value of $\lambda_J(\omega)$ for the minimization problem as the subspace dimensions
grow to infinity. In practice, we observe at least a superlinear rate-of-convergence with respect to the dimension
of the subspaces for both of the procedures. We prove this formally in Section \ref{sec:rate_of_convergence}
fully for one of the procedures and partly for the other. Section \ref{sec:var_ext} focuses on variations and extensions of the two
subspace procedures for eigenvalue optimization. Specifically, in Section \ref{sec:subspace_nopast} we argue 
that a variant that disregards the subspaces formed in the past iterations works effectively for the maximization 
problem but not for the minimization problem, in Section \ref{sec:singular_vals} and  \ref{sec:small_singular_vals} 
we extend the procedures to optimize a specified singular value of a compact operator
depending on several parameters analytically, and in Section \ref{sec:cutting_plane} we provide a comparison of
one of the subspace procedures for eigenvalue optimization with the cutting plane method \cite{Kelley1960}, 
which also constructs global lower envelopes repeatedly. Section \ref{sec:applications} 
describes the MATLAB software accompanying this work, and
the efficiency of the proposed subspace procedures on particular applications, namely
the numerical radius, the distance to instability from a matrix, and the minimization of the largest 
eigenvalue of an affine matrix-valued function. The text concludes with a summary
as well as research directions for future in Section \ref{sec:conclusion}.


\section{The Subspace Procedures} \label{sec:eig_subspace}
Let ${\mathcal V}$ be a finite dimensional subspace of $\ell^2({\mathbb N})$, and $V := \{v_1, \dots, v_m \}$
be an orthonormal basis for ${\mathcal V}$. The linear operator 
\begin{equation}\label{eq:change_coor}
	{\mathbf V} : {\mathbb C}^m \rightarrow \ell^2({\mathbb N}),	\quad\quad
	{\mathbf V} \alpha	:=	\sum_{j=1}^m \alpha_j v_j
\end{equation}
maps the coordinates of a vector $v \in {\mathcal V}$ relative to $V$
to itself. On the other hand, its adjoint ${\mathbf V}^\ast :  \ell^2({\mathbb N}) \rightarrow {\mathbb C}^m$ projects 
a vector in $\ell^2({\mathbb N})$ orthogonally onto ${\mathcal V}$ and represents the orthogonal projection 
in coordinates relative to $V$. The procedures that will be introduced in this section are based on
operators of the form
\begin{equation}\label{eq:reduced_mat_fun}
	{\mathbf A}^{{\mathcal V}}(\omega) := {\mathbf V}^\ast {\mathbf A}(\omega) {\mathbf V},
\end{equation}
which is the restriction of ${\mathbf A}(\omega)$ that acts only on ${\mathcal V}$, with its input and output
represented in coordinates relative to $V$. This operator can be expressed of the form
\[
	{\mathbf A}^{{\mathcal V}}(\omega)	=	\sum_{\ell = 1}^\kappa		f_\ell(\omega)	{\mathbf A}^{{\mathcal V}}_\ell,
	\;\;\;\;\;	 {\rm where} \;\;\;
	{\mathbf A}^{{\mathcal V}}_\ell := {\mathbf V}^\ast {\mathbf A}_\ell {\mathbf V},
\]
which is beneficial from computational point of view, because it is possible to form the $m\times m$
matrix representations of the operators ${\mathbf A}^{{\mathcal V}}_\ell$ in advance. 
The reduced eigenvalue optimization problems are defined in terms of the $J$th largest eigenvalue
$\lambda_J^{\mathcal V}(\omega)$ of ${\mathbf A}^{\mathcal V}(\omega)$ as
\begin{equation}\label{eq:reduced_problems}
	{\rm minimize} \:
	\{ \lambda_J^{\mathcal V}(\omega)	\; | \;	{\omega \in \Omega}	\}
			\;\;\;\;\;\;\;	{\rm and}		\;\;\;\;\;\;\;
	{\rm maximize} \:
	\{ \lambda_J^{\mathcal V}(\omega)	\; | \;	{\omega \in \Omega}	\}.
\end{equation}
The discussions above generalize when ${\mathcal V}$ is infinite dimensional.
In the infinite dimensional setting, the operators ${\mathbf A}^{\mathcal V}(\omega)$ and 
${\mathbf A}^{{\mathcal V}}_\ell$ are defined similarly in terms of an infinite countable
orthonormal basis $V = \{ v_j \; | \; j \in {\mathbb Z}^+ \} $ for ${\mathcal V}$ and the associated 
operator 
\begin{equation}\label{eq:change_coor2}
	{\mathbf V} \alpha	:=	\sum_{j=1}^\infty \alpha_j v_j.
\end{equation}

The eigenvalue function $\lambda^{\mathcal V}_J(\omega)$ is Lipschitz continuous, indeed there exists a
uniform Lipschitz constant $\gamma$ for all subspaces ${\mathcal V}$ of $\ell^2({\mathbb N})$ as formally stated 
and proven in the following lemma.
\begin{lemma}[Lipschitz Continuity]\label{lemma:Lipschitz_continuity}
	There exists a real scalar $\gamma > 0$ such that for all subspaces ${\mathcal V}$ of $\ell^2({\mathbb N})$,
	we have
	\[
		\left| \lambda^{\mathcal V}_J(\widetilde{\omega})		-	\lambda^{\mathcal V}_J(\omega) \right|
					\;	\leq	\;
		\gamma \cdot \| \widetilde{\omega} - \omega \|_2
		\quad
		\forall \widetilde{\omega}, \: \omega \in \overline{\Omega}.
	\]
\end{lemma}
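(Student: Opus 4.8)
The plan is to control the oscillation of $\lambda^{\mathcal V}_J$ by passing through three successive estimates: from the eigenvalue to the reduced operator, from the reduced operator ${\mathbf A}^{\mathcal V}(\omega)$ to the full operator ${\mathbf A}(\omega)$, and finally from ${\mathbf A}(\omega)$ down to the scalar coefficient functions $f_\ell$. Concretely, I would establish the chain
\[
	\left| \lambda^{\mathcal V}_J(\widetilde\omega) - \lambda^{\mathcal V}_J(\omega) \right|
	\;\le\;
	\left\| {\mathbf A}^{\mathcal V}(\widetilde\omega) - {\mathbf A}^{\mathcal V}(\omega) \right\|_2
	\;\le\;
	\left\| {\mathbf A}(\widetilde\omega) - {\mathbf A}(\omega) \right\|_2
	\;\le\;
	\gamma \, \| \widetilde\omega - \omega \|_2 ,
\]
in which the final constant $\gamma$ does not involve ${\mathcal V}$. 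The whole point is that both middle inequalities are insensitive to the choice of subspace, so the uniform Lipschitz constant falls out automatically.

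For the first inequality I would invoke the Courant--Fischer (max--min) characterization of the $J$th largest eigenvalue, valid for any self-adjoint compact operator $B$ (and a fortiori for the Hermitian matrix ${\mathbf A}^{\mathcal V}(\omega)$ when ${\mathcal V}$ is finite dimensional),
\[
	\lambda_J(B) \;=\; \max_{\dim S = J} \; \min_{ x \in S, \, \| x \|_2 = 1 } \; \langle B x, x \rangle .
\]
Fixing the $J$-dimensional space $S$ optimal for $C := {\mathbf A}^{\mathcal V}(\omega)$ and estimating $\langle B x, x\rangle = \langle C x, x\rangle + \langle (B-C)x, x\rangle \ge \lambda_J(C) - \|B-C\|_2$ on the unit sphere of $S$, with $B := {\mathbf A}^{\mathcal V}(\widetilde\omega)$, gives $\lambda_J(B) \ge \lambda_J(C) - \|B-C\|_2$; interchanging $B$ and $C$ yields the two-sided Weyl bound $|\lambda_J(B) - \lambda_J(C)| \le \|B-C\|_2$. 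I would first record that ${\mathbf A}^{\mathcal V}(\omega) = {\mathbf V}^\ast {\mathbf A}(\omega) {\mathbf V}$ is self-adjoint and compact, so $\lambda^{\mathcal V}_J$ is well defined and the characterization applies.

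The uniformity over ${\mathcal V}$ enters in the second inequality. Since $V$ is orthonormal, ${\mathbf V}^\ast {\mathbf V} = I$, hence $\|{\mathbf V}\|_2 = \|{\mathbf V}^\ast\|_2 = 1$ regardless of which subspace ${\mathcal V}$ is chosen; thus, writing $X := {\mathbf A}(\widetilde\omega) - {\mathbf A}(\omega)$,
\[
	\left\| {\mathbf A}^{\mathcal V}(\widetilde\omega) - {\mathbf A}^{\mathcal V}(\omega) \right\|_2
	= \left\| {\mathbf V}^\ast X {\mathbf V} \right\|_2
	\le \| {\mathbf V}^\ast \|_2 \, \| X \|_2 \, \| {\mathbf V} \|_2
	= \| X \|_2 .
\]
For the last inequality I would use that each $f_\ell$ is real-analytic, hence $C^1$, hence Lipschitz with some constant $L_\ell$ on the relevant region, so that the triangle inequality gives
\[
	\| X \|_2 = \Big\| \sum_{\ell=1}^\kappa \big( f_\ell(\widetilde\omega) - f_\ell(\omega) \big) {\mathbf A}_\ell \Big\|_2
	\le \sum_{\ell=1}^\kappa L_\ell \, \| {\mathbf A}_\ell \|_2 \, \| \widetilde\omega - \omega \|_2 ,
\]
and $\gamma := \sum_{\ell=1}^\kappa L_\ell \|{\mathbf A}_\ell\|_2$ then serves all ${\mathcal V}$ at once.

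The delicate points — where I would spend the real care, rather than on the arithmetic — are the two that make the constant both uniform and legitimate. First, the Weyl perturbation bound must be justified in the compact self-adjoint, possibly infinite-dimensional, setting, together with the existence of the optimal $J$-dimensional subspace $S$ for $C$; this follows from the discreteness of the spectrum of a compact self-adjoint operator away from $0$, but it is the step that genuinely carries content. Second, producing a single $L_\ell$ valid for all the points appearing in the statement requires the gradients of the $f_\ell$ to be bounded; since $\overline\Omega$ is an open neighborhood of the compact set $\Omega$, this is immediate on any compact subset and should simply be recorded as a harmless restriction to a bounded region. Everything else reduces to the triangle inequality and the non-expansiveness of orthogonal projection.
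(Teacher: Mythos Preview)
Your proof is correct and follows essentially the same route as the paper's: Weyl's perturbation bound via the max--min characterization, non-expansiveness of the orthonormal map ${\mathbf V}$ to strip away the subspace dependence, and Lipschitz continuity of the analytic $f_\ell$ to produce the uniform constant $\gamma = \sum_\ell L_\ell \|{\mathbf A}_\ell\|_2$. The only cosmetic difference is that the paper applies the non-expansiveness term-by-term (writing $\|{\mathbf A}^{\mathcal V}_\ell\|_2 \le \|{\mathbf A}_\ell\|_2$) rather than to the full difference ${\mathbf A}(\widetilde\omega)-{\mathbf A}(\omega)$ as you do; and you are, if anything, more careful than the paper about the two points you flag as delicate.
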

\begin{proof}
By Weyl's theorem (see \cite[Theorem 4.3.1]{Horn1985} for the finite dimensional case;
its extension to the infinite dimensional setting is straightforward by exploiting the maximin
characterization (\ref{eq:maximin}) of $\lambda^{\mathcal V}_J(\omega)$ given below)
\begin{eqnarray*}
	\left| \lambda^{\mathcal V}_J(\widetilde{\omega})		-	\lambda^{\mathcal V}_J(\omega) \right|
				\; & \leq & \;
	\| {\mathbf A}^{\mathcal V}( \widetilde{\omega} ) - {\mathbf A}^{\mathcal V}( \omega )	\|_2	\\
				\; & \leq & \;
	\sum_{\ell=1}^\kappa \left| f_\ell( \widetilde{\omega})  -  f_\ell(\omega) \right|  \| {\mathbf A}^{\mathcal V}_\ell \|_2
	\quad
	\forall \widetilde{\omega}, \: \omega \in \overline{\Omega}.
\end{eqnarray*}
In the last summation $\| {\mathbf A}^{{\mathcal V}}_\ell \|_2 \leq \| {\mathbf A}_\ell \|_2$ and the real analyticity of
$f_\ell(\omega)$ implies its Lipschitz continuity, hence the existence of a constant $\gamma_\ell$ satisfying
\[
	\left| f_\ell(\widetilde{\omega}) - f_\ell(\omega) \right|
		\; \leq \;
	\gamma_\ell \|\widetilde{\omega} - \omega \|_2
	\quad
	\forall \widetilde{\omega}, \: \omega \in \overline{\Omega}.
\]
Combining these observations we obtain
\[
	\left| \lambda^{\mathcal V}_J(\widetilde{\omega})		-	\lambda^{\mathcal V}_J(\omega) \right|
				\; \leq \;
		\left( \sum_{\ell = 1}^\kappa  \gamma_\ell \| {\mathbf A}_\ell \|_2 \right) \cdot \| \widetilde{\omega} - \omega \|_2
		\quad
		\forall \widetilde{\omega}, \: \omega \in \overline{\Omega}
\]
as desired.
\end{proof}

Throughout the rest of this section, we first investigate the relation between $\lambda_J(\omega)$ 
and $\lambda_J^{\mathcal V}(\omega)$ as well as their globally maximal values. Some of these
theoretical results will be frequently used in the subsequent sections. The second and third parts
of this section introduce two subspace procedures for the generation of small dimensional
subspaces ${\mathcal V}$, leading to reduced eigenvalue optimization problems that approximate
the original eigenvalue optimization problems accurately.

\subsection{Relations between $\lambda_J^{\mathcal V}(\omega)$ and $\lambda_J(\omega)$} 
We start with a result about the monotonicity of $\lambda_J^{\mathcal V}(\omega)$ with respect to ${\mathcal V}$. 
This result is an immediate consequence of the following maximin characterization \cite[pages 1543-1544]{Dunford1998} 
(see also \cite[Theorem 4.2.11]{Horn1985} for the finite dimensional case) of $\lambda^{\mathcal V}_J (\omega)$
for all subspaces ${\mathcal V}$ of $\ell^2({\mathbb N})$:
\begin{equation}\label{eq:maximin}
	\lambda^{\mathcal V}_J (\omega)
				\;\; =  \;\;
	\sup_{{\mathcal S} \subseteq {\mathcal V}, \; \dim {\mathcal S} = J}	\;\;
	\min_{v \in {\mathcal S}, \; \| v \|_2 = 1}	\;	\langle {\mathbf A}(\omega) v, v \rangle,
\end{equation}
where $\langle \cdot, \cdot \rangle$ stands for the standard inner product 
$\langle w, v \rangle := \sum_{k=0}^\infty \overline{w_k} \: v_k$ and the outer supremum is 
over all $J$ dimensional subspaces of ${\mathcal V}$. Furthermore, if ${\mathbf A}^{\mathcal V}(\omega)$
has at least $J$ positive eigenvalues, that is if $\lambda_J^{\mathcal V}(\omega) > 0$, or if ${\mathcal V}$ is
finite dimensional, then the outer supremum in (\ref{eq:maximin}) is attained, hence can be replaced by maximum.
The monotonicity result presented next will play a central role later when we analyze the convergence of the subspace procedures.
\begin{lemma}[Monotonicity]\label{lemma:monotonicity}
Let ${\mathcal V}_1$, ${\mathcal V}_2$ be subspaces of $\ell^2 ({\mathbb N})$ of dimension larger 
than or equal to $J$ such that ${\mathcal V}_1  \;\; \subseteq \;\; {\mathcal V}_2$.  The following holds:
\begin{equation}\label{eq:monotone}
	\lambda^{{\mathcal V}_1}_J (\omega)	\;\;	\leq	\;\;		\lambda^{{\mathcal V}_2}_J (\omega)	\;\;	\leq	\;\;	\lambda_J(\omega).
\end{equation}
\end{lemma}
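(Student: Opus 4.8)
The plan is to prove the two inequalities in \eqref{eq:monotone} separately, using the maximin characterization \eqref{eq:maximin} as the central tool in both cases. The key observation is that \eqref{eq:maximin} expresses each of the three quantities as a supremum over $J$-dimensional subspaces ${\mathcal S}$ of the quadratic form $\langle {\mathbf A}(\omega) v, v \rangle$; crucially, \emph{all three} expressions use the same inner quantity $\min_{v \in {\mathcal S}, \|v\|_2 = 1} \langle {\mathbf A}(\omega) v, v \rangle$, and differ only in the set over which the outer supremum ranges. For $\lambda^{{\mathcal V}_1}_J(\omega)$ the supremum is over $J$-dimensional ${\mathcal S} \subseteq {\mathcal V}_1$; for $\lambda^{{\mathcal V}_2}_J(\omega)$ over $J$-dimensional ${\mathcal S} \subseteq {\mathcal V}_2$; and for $\lambda_J(\omega)$ over all $J$-dimensional ${\mathcal S} \subseteq \ell^2({\mathbb N})$ (this last being the standard maximin/Courant--Fischer characterization of the eigenvalue of the unrestricted operator ${\mathbf A}(\omega)$ itself).

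First I would record that under the hypothesis ${\mathcal V}_1 \subseteq {\mathcal V}_2 \subseteq \ell^2({\mathbb N})$ we have the inclusion of index sets
\[
	\{ {\mathcal S} \subseteq {\mathcal V}_1 : \dim {\mathcal S} = J \}
	\;\subseteq\;
	\{ {\mathcal S} \subseteq {\mathcal V}_2 : \dim {\mathcal S} = J \}
	\;\subseteq\;
	\{ {\mathcal S} \subseteq \ell^2({\mathbb N}) : \dim {\mathcal S} = J \}.
\]
Since taking a supremum over a larger set can only increase (or leave unchanged) its value, enlarging the feasible collection of test subspaces ${\mathcal S}$ immediately yields $\lambda^{{\mathcal V}_1}_J(\omega) \leq \lambda^{{\mathcal V}_2}_J(\omega) \leq \lambda_J(\omega)$. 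I should also check that each supremum is taken over a nonempty collection, which is exactly why the hypothesis $\dim {\mathcal V}_1 \geq J$ is imposed: it guarantees that ${\mathcal V}_1$ (and hence ${\mathcal V}_2$ and $\ell^2({\mathbb N})$) contains at least one $J$-dimensional subspace, so $\lambda^{{\mathcal V}_1}_J(\omega)$ is well defined to begin with.

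One subtle point that needs care is the legitimacy of applying the characterization \eqref{eq:maximin} with the ambient operator ${\mathbf A}(\omega)$ appearing in the quadratic form rather than the reduced operator ${\mathbf A}^{{\mathcal V}}(\omega)$. This is precisely what \eqref{eq:maximin} asserts: the $J$th largest eigenvalue of ${\mathbf A}^{{\mathcal V}}(\omega) = {\mathbf V}^\ast {\mathbf A}(\omega) {\mathbf V}$ equals the maximin of $\langle {\mathbf A}(\omega) v, v\rangle$ over $J$-dimensional subspaces of ${\mathcal V}$, because for $v = {\mathbf V}\alpha \in {\mathcal V}$ with $\|v\|_2 = 1$ one has $\langle {\mathbf A}^{{\mathcal V}}(\omega)\alpha, \alpha \rangle = \langle {\mathbf A}(\omega) v, v \rangle$ and ${\mathbf V}$ is an isometry onto ${\mathcal V}$. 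Thus the inner Rayleigh-quotient minimizations genuinely agree across the three expressions, and the whole argument reduces to the monotonicity of the supremum under set inclusion.

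The main obstacle, such as it is, is analytical rather than algebraic: in the infinite dimensional setting the outer supremum in \eqref{eq:maximin} need not be attained, so I must phrase the comparison purely in terms of suprema over nested sets and avoid any argument that presumes an optimal ${\mathcal S}$ exists. The inclusion-of-sets reasoning above is robust to this, since it never requires attainment. If one wished to be fully rigorous about the rightmost inequality, the only additional ingredient is to confirm that the unrestricted maximin over all $J$-dimensional ${\mathcal S} \subseteq \ell^2({\mathbb N})$ indeed equals $\lambda_J(\omega)$, which follows from the spectral decomposition of the compact self-adjoint operator ${\mathbf A}(\omega)$ recorded earlier in the excerpt (completeness of the orthonormal eigenvectors and $\mu^{(j)} \to 0$), exactly as in the finite dimensional Courant--Fischer theorem.
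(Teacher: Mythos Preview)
Your proof is correct and follows exactly the approach indicated in the paper: the lemma is stated there as an immediate consequence of the maximin characterization \eqref{eq:maximin}, and your argument simply spells out that the three quantities are suprema of the same inner functional over nested collections of $J$-dimensional subspaces. Your additional remarks on well-definedness (nonemptiness from $\dim {\mathcal V}_1 \geq J$) and on not needing attainment of the supremum are sound and make the reasoning more explicit than the paper's one-line justification.
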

\vskip -2ex
\noindent
A consequence of this monotonicity result is the following interpolatory property.
\begin{lemma}[Interpolatory Property]\label{lemma:interpolatary}
Let ${\mathcal V}$ be a subspace of  $\ell^2 ({\mathbb N})$ of dimension larger than or equal to $J$, and ${\mathbf V}$
be the operator defined as in (\ref{eq:change_coor}) or (\ref{eq:change_coor2})
in terms of an orthonormal basis $V$ for ${\mathcal V}$. 
If $S := {\rm span}\{ s_1, \dots, s_J \} \; \subseteq \; {\mathcal V}$ where $s_1, \dots, s_J$ are eigenvectors corresponding to
the $J$ largest eigenvalues of ${\mathbf A}(\omega)$, then the following hold:
\begin{enumerate}
	\item[\bf (i)]  $\lambda_J(\omega) = \lambda^{\mathcal V}_J(\omega)$;
	\item[\bf (ii)] ${\mathbf V}^\ast s_J$ is an eigenvector of ${\mathbf V}^\ast {\mathbf A}(\omega) {\mathbf V}$ corresponding to 
	its eigenvalue $\lambda^{\mathcal V}_J(\omega)$.
\end{enumerate}
\end{lemma}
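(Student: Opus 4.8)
The plan is to prove (i) by squeezing $\lambda^{\mathcal V}_J(\omega)$ between two matching bounds, and then to read off (ii) from a simple projection identity. For the upper bound I would merely invoke the Monotonicity Lemma (Lemma~\ref{lemma:monotonicity}) with ${\mathcal V}_1 = {\mathcal V}_2 = {\mathcal V}$, which already yields $\lambda^{\mathcal V}_J(\omega) \le \lambda_J(\omega)$. The substance is the matching lower bound $\lambda^{\mathcal V}_J(\omega) \ge \lambda_J(\omega)$, and for this I would apply the maximin characterization (\ref{eq:maximin}) to the single, judiciously chosen test subspace $S = {\rm span}\{s_1,\dots,s_J\}$: by hypothesis $S \subseteq {\mathcal V}$ and $\dim S = J$, so $S$ is admissible in the outer supremum of (\ref{eq:maximin}).

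The key computation is the value of the inner minimum over $S$. Since $S$ is spanned by eigenvectors of ${\mathbf A}(\omega)$, it is an invariant subspace on which ${\mathbf A}(\omega)$ restricts to a self-adjoint operator whose eigenvalues are precisely the $J$ largest eigenvalues $\mu_1 \ge \dots \ge \mu_J = \lambda_J(\omega)$ of ${\mathbf A}(\omega)$. Expanding any unit vector $v \in S$ in an orthonormal eigenbasis as $v = \sum_{i=1}^J \alpha_i s_i$ with $\sum_{i=1}^J |\alpha_i|^2 = 1$, one gets $\langle {\mathbf A}(\omega) v, v\rangle = \sum_{i=1}^J |\alpha_i|^2 \mu_i \ge \mu_J$, with equality at $v = s_J$. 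Hence the inner minimum over $S$ equals $\lambda_J(\omega)$, and inserting $S$ into (\ref{eq:maximin}) gives $\lambda^{\mathcal V}_J(\omega) \ge \lambda_J(\omega)$; combined with monotonicity this establishes (i).

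For (ii) I would exploit that ${\mathbf V}{\mathbf V}^\ast$ is the orthogonal projector onto ${\mathcal V}$, so that $s_J \in S \subseteq {\mathcal V}$ forces ${\mathbf V}{\mathbf V}^\ast s_J = s_J$, and in particular ${\mathbf V}^\ast s_J \ne 0$. A direct substitution then gives ${\mathbf V}^\ast {\mathbf A}(\omega) {\mathbf V} \,({\mathbf V}^\ast s_J) = {\mathbf V}^\ast {\mathbf A}(\omega) s_J = \mu_J\, {\mathbf V}^\ast s_J$, using ${\mathbf A}(\omega) s_J = \mu_J s_J$. Since part (i) identifies $\mu_J = \lambda_J(\omega) = \lambda^{\mathcal V}_J(\omega)$, this exhibits the nonzero vector ${\mathbf V}^\ast s_J$ as an eigenvector of ${\mathbf A}^{\mathcal V}(\omega)$ for the eigenvalue $\lambda^{\mathcal V}_J(\omega)$, as claimed.

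I do not anticipate a serious obstacle: the whole argument is a Courant--Fischer computation plus bookkeeping with the isometry ${\mathbf V}$. The one point deserving care is the infinite-dimensional maximin characterization (\ref{eq:maximin}), where I must ensure both that $S$ is genuinely admissible and that the inner minimum is actually attained rather than merely approached. Both are automatic here, since $S$ is finite-dimensional of dimension exactly $J$ and the continuous quadratic form $v \mapsto \langle {\mathbf A}(\omega) v, v\rangle$ attains its minimum on the compact unit sphere of $S$; thus no delicate limiting argument over infinite-dimensional subspaces is required.
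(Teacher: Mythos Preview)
Your proof is correct. For part \textbf{(i)} your argument is essentially identical to the paper's: both insert the eigenspace $S={\rm span}\{s_1,\dots,s_J\}$ as a test subspace in the maximin characterization~(\ref{eq:maximin}) to obtain $\lambda_J^{\mathcal V}(\omega)\ge\lambda_J(\omega)$, and both close with Lemma~\ref{lemma:monotonicity}; the only cosmetic difference is that the paper passes to coordinates $\alpha_\ell={\mathbf V}^\ast s_\ell$ in ${\mathbb C}^m$ before invoking the maximin, whereas you work directly in $\ell^2({\mathbb N})$.

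For part \textbf{(ii)} your route is slightly different and, in fact, more direct. The paper infers that $\alpha_J={\mathbf V}^\ast s_J$ is an eigenvector from the variational equalities established in part \textbf{(i)} (i.e., the inner minimum over the optimal subspace is attained at $\alpha_J$, hence $\alpha_J$ must be an eigenvector). You instead verify the eigenvalue equation by a one-line computation using the projector identity ${\mathbf V}{\mathbf V}^\ast s_J=s_J$, which bypasses the variational reasoning entirely. Both arguments are valid; yours is shorter and makes the eigenvector conclusion self-contained, while the paper's version reuses the chain of equalities already built for \textbf{(i)}.
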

\begin{proof}
\textbf{(i)} 
We assume each $s_\ell \in {\mathcal V}$ for $\ell = 1, \dots, J$ is of unit length without loss of generality. 
It follows that there exist  $\alpha_1, \dots, \alpha_J$ of unit length such that $s_\ell = {\mathbf V} \alpha_\ell$ for $\ell = 1,\dots,J$. 
Now define ${\mathcal S}_\alpha := {\rm span} \{ \alpha_1, \dots, \alpha_J \} \subseteq {\mathbb C}^m$, and observe
\begin{eqnarray*}
	\lambda_J \left( 	\omega		\right)	\;  =  \; 
	\langle    {\mathbf A} \left(	\omega	\right) s_J,  s_J \rangle & = &
	\langle  {\mathbf A} \left( \omega \right)   {\mathbf V} \alpha_J , {\mathbf V}\alpha_J  \rangle  \\
	& = & \langle  {\mathbf V}^\ast {\mathbf A} \left( \omega \right)   {\mathbf V} \alpha_J , \alpha_J  \rangle \\	
	& = &	\min_{\alpha \in {\mathcal S}_\alpha, \| \alpha \|_2 = 1}	\; 	\langle {\mathbf V}^\ast    {\mathbf A} \left( \omega \right)  {\mathbf V} \alpha, \alpha \rangle
	\;\;     \leq 	 \;\;   \lambda^{\mathcal V}_J \left(  \omega \right).	
\end{eqnarray*}
The opposite inequality is immediate from Lemma \ref{lemma:monotonicity}, so
$
	\lambda_J \left(	\omega 	\right) 
			=
	\lambda^{\mathcal V}_J \left(  \omega \right)
$
as claimed. \\
\textbf{(ii)}
The equalities 
\[
	 \langle {\mathbf V}^\ast {\mathbf A} \left( \omega \right)  {\mathbf V}  \alpha_J, \alpha_J 	\rangle 	
	\;\; 	= 	\;\; 	\min_{\alpha \in \mathcal S_{\alpha}, \| \alpha \|_2 = 1}	\; 	\langle  {\mathbf V}^\ast {\mathbf A} \left( \omega \right)  {\mathbf V} \alpha, \alpha \rangle
	\;\;     = 	\;\;   \lambda^{\mathcal V}_J \left(  \omega \right).
\]
imply that $\alpha_J = {\mathbf V}^\ast s_J$ is an eigenvector of ${\mathbf V}^\ast {\mathbf A} \left(	  \omega 	\right) {\mathbf V}$ corresponding to the
eigenvalue $\lambda^{\mathcal V}_J \left(	  \omega 	\right)$.
\end{proof}

Maximization of the $J$th largest eigenvalue over a low-dimensional subspace is motivated by the next result.
According to the result, it suffices to perform the optimization on a proper $J$ dimensional subspace, which
is hard to determine in advance. Here and elsewhere, 
$\arg \max_{\omega \in \Omega}\; \lambda_J \left( \omega \right)$ and 
$\arg \min \in_{\omega \in \Omega}\; \lambda_J \left( \omega \right)$ denote
the set of global maximizers and global minimizers of $\lambda_J(\omega)$ over $\omega \in \Omega$, respectively.
\begin{lemma}[Low-Rank Property of Maximization Problems]\label{thm:low_rank}
For a given subspace ${\mathcal V} \subset \ell^2({\mathbb N})$ with dimension larger than or equal to $J$, 
consider the following assertions:
\begin{enumerate}
	\item[\bf (i)]
		$ \max_{\omega \in \Omega} \; \lambda^{\mathcal V}_J \left( \omega \right) \;\;		=		\;\; 
			\max_{\omega \in \Omega} \; \lambda_J \left( \omega \right) $;
	\item[\bf (ii)] ${\mathcal S}_\ast := {\rm span} \left\{ s_1, \dots, s_J \right\} \; \subseteq \; {\mathcal V}$, 
	where $s_1, \dots, s_J$ are eigenvectors corresponding to the $J$ largest eigenvalues of 
	${\mathbf A}(\omega_\ast)$ at some $\omega_\ast \in \arg \max_{\omega \in \Omega}\; \lambda_J \left( \omega \right) $.
\end{enumerate}
Assertion \textbf{(ii)} implies assertion \textbf{(i)}. Furthermore, when $J = 1$, assertions \textbf{(i)} and \textbf{(ii)} are equivalent.
\end{lemma}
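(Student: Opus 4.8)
The plan is to handle the two directions separately, leaning on the Monotonicity and Interpolatory lemmas already in hand.

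First, to show that \textbf{(ii)} implies \textbf{(i)}, I would fix the global maximizer $\omega_\ast$ supplied by \textbf{(ii)} and apply Lemma~\ref{lemma:interpolatary}\textbf{(i)}: since $\mathcal{S}_\ast \subseteq \mathcal{V}$ consists of eigenvectors for the $J$ largest eigenvalues of $\mathbf{A}(\omega_\ast)$, we get at once $\lambda_J(\omega_\ast) = \lambda_J^{\mathcal{V}}(\omega_\ast)$. Monotonicity (Lemma~\ref{lemma:monotonicity}) gives $\lambda_J^{\mathcal{V}}(\omega) \leq \lambda_J(\omega)$ for every $\omega \in \Omega$, hence $\max_\omega \lambda_J^{\mathcal{V}}(\omega) \leq \max_\omega \lambda_J(\omega)$, while the reverse inequality follows from the chain
\[
	\max_\omega \lambda_J(\omega) = \lambda_J(\omega_\ast) = \lambda_J^{\mathcal{V}}(\omega_\ast) \leq \max_\omega \lambda_J^{\mathcal{V}}(\omega).
\]
Combining the two inequalities yields \textbf{(i)}. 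This direction is routine.

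For the converse when $J=1$, the idea is that the optimal vector of the reduced problem is automatically a top eigenvector of $\mathbf{A}$ at a global maximizer. Let $\omega_\ast$ maximize $\lambda_1^{\mathcal{V}}$ over $\Omega$, which exists because $\lambda_1^{\mathcal{V}}$ is continuous by Lemma~\ref{lemma:Lipschitz_continuity} and $\Omega$ is compact, and let $v_\ast \in \mathcal{V}$ with $\| v_\ast \|_2 = 1$ attain $\lambda_1^{\mathcal{V}}(\omega_\ast) = \langle \mathbf{A}(\omega_\ast) v_\ast, v_\ast \rangle$ in the maximin characterization (\ref{eq:maximin}). By \textbf{(i)} this value equals $\max_\omega \lambda_1(\omega)$. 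Since $\langle \mathbf{A}(\omega_\ast) v_\ast, v_\ast \rangle \leq \lambda_1(\omega_\ast) \leq \max_\omega \lambda_1(\omega)$ always holds, both inequalities must be equalities; thus $\omega_\ast$ is a global maximizer of $\lambda_1$ and $v_\ast$ attains $\lambda_1(\omega_\ast)$, so $v_\ast$ is an eigenvector of $\mathbf{A}(\omega_\ast)$ for its largest eigenvalue. Taking $s_1 = v_\ast \in \mathcal{V}$ gives $\mathcal{S}_\ast = {\rm span}\{ s_1 \} \subseteq \mathcal{V}$, which is precisely \textbf{(ii)}.

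The main obstacle lies in the attainment subtleties of this second direction. One must guarantee both that the reduced maximizer $\omega_\ast$ exists (continuity plus compactness of $\Omega$) and that the restricted Rayleigh quotient attains its supremum at some $v_\ast \in \mathcal{V}$. The latter is immediate for finite dimensional $\mathcal{V}$, but in the infinite dimensional case it rests on the standing positivity assumption: since $\lambda_1(\omega) > 0$ on $\Omega$ forces $\max_\omega \lambda_1 > 0$, the supremum in (\ref{eq:maximin}) is attained, whereas without positivity it need not be. Once attainment is secured, identifying $v_\ast$ as a top eigenvector follows purely from the equality case of the Rayleigh quotient bound, with no further computation required.
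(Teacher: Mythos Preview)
Your proposal is correct and follows essentially the same route as the paper: both directions use the Interpolatory Property and Monotonicity for \textbf{(ii)}$\Rightarrow$\textbf{(i)}, and for the converse with $J=1$ both pick a maximizer $\omega_\ast$ of $\lambda_1^{\mathcal V}$, take the vector attaining the Rayleigh quotient (the paper writes it as $\mathbf{V}\alpha$ with $\alpha$ an eigenvector of the reduced operator, you write it directly as $v_\ast\in\mathcal V$), and squeeze to force it to be a top eigenvector of $\mathbf{A}(\omega_\ast)$ at a global maximizer. Your discussion of the attainment subtlety via the standing positivity assumption is, if anything, slightly more explicit than the paper's.
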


\begin{proof}
Suppose $s_1, \dots, s_J \in {\mathcal V}$ satisfies assertion (ii). By Lemma \ref{lemma:interpolatary}, we have
\begin{eqnarray*}
\max_{\omega \in \Omega} \; \lambda_J \left( \omega \right) \;\; =	\;\;
\lambda_J \left( \omega_\ast \right)  \;\; = \;\; 	 \lambda^{\mathcal V}_J \left( \omega_\ast \right)
\;\;\;	\leq		\;\;\;	\max_{\omega \in \Omega} \;  \lambda^{\mathcal V}_J \left(  \omega \right).
\end{eqnarray*}
Lemma \ref{lemma:monotonicity} implies the opposite inequality,  proving assertion (i).

To prove that the assertions are equivalent when $J = 1$, assume that assertion (i) holds. 
Letting
$\omega_\ast$ 
be any point in 
$
	\arg \max_{\omega \in \Omega} \; \lambda^{\mathcal V}_1 \left( \omega \right),
$
denoting by ${\mathbf V}$ an operator defined as in (\ref{eq:change_coor}) or (\ref{eq:change_coor2})
in terms of an orthonormal basis $V$ for ${\mathcal V}$, and denoting by  $\alpha$ a unit eigenvector 
corresponding to the largest eigenvalue of ${\mathbf V}^\ast {\mathbf A}(\omega_\ast) {\mathbf V}$
(note that $\lambda_1^{\mathcal V}(\omega_\ast) = \lambda_1(\omega_\ast) > 0$, so the
unit eigenvector $\alpha$ is well-defined), we have
\begin{equation}\label{eq:eig_inequality}
	\max_{\omega \in {\mathbb R}^d} \; \lambda_1 \left( \omega \right)
					\;  =	\;
	\lambda^{\mathcal V}_1 \left( \omega_\ast  \right)
					\;  =  \;
	\langle  {\mathbf V}^\ast {\mathbf A}(\omega_\ast) {\mathbf V} \alpha, \alpha \rangle
					\;  =  \;
	\langle  {\mathbf A}(\omega_\ast) {\mathbf V} \alpha, {\mathbf V} \alpha \rangle
					\;  \leq  \;
	\lambda_1  \left( \omega_\ast \right).	
\end{equation}
Thus we deduce 
\[
	\lambda_1 \left(  \omega_\ast \right)
						\;\; =	\;\;
	\max_{\omega \in \Omega} \; \lambda_1 \left(  \omega \right)
						\;\; = \;\;
	\langle  {\mathbf A}(\omega_\ast) {\mathbf V} \alpha, {\mathbf V} \alpha \rangle.
\]
Consequently, ${\mathbf V}\alpha$ is a unit eigenvector of ${\mathbf A}(\omega_\ast)$ corresponding to its largest eigenvalue,
where 
$
	\omega_\ast
$
belongs to
$
	\arg \max_{\omega \in \Omega} \; \lambda_1 \left( \omega \right).
$
Furthermore, ${\rm span} \left\{ {\mathbf V} \alpha \right\} \; \subseteq \; {\mathcal V}$. This proves assertion (ii).
\end{proof}

When ${\mathcal V}$ does not contain the optimal subspace ${\mathcal S}_\ast$ (as in part (ii) of Lemma \ref{thm:low_rank}), 
the next result quantifies the gap between the eigenvalues of the original and the reduced operators in 
terms of the distance from ${\mathcal S}_\ast$ to the $J$ dimensional subspaces of ${\mathcal V}$. For this result, we define 
the distance between two finite dimensional subspaces $\widetilde{\mathcal S}, {\mathcal S}$ of $\ell^2({\mathbb N})$
of same dimension by
\[
	d\left(  \widetilde{\mathcal S}, {\mathcal S}  \right)
				\;\;	:=	\;\;
	\max_{\widetilde{v} \in \widetilde{\mathcal S}, \; \| \widetilde{v} \|_2 = 1} \;\;		\min_{v \in {\mathcal S}}	\;\;
			\| \widetilde{v} - v \|_2.
\] 
This distance corresponds to the sine of the largest angle between the subspaces $\widetilde{\mathcal S}$ and ${\mathcal S}$.
Results of similar nature can be found in the literature, see for instance \cite[Theorem 11.7.1]{Parlett1998}
and \cite[Proposition 4.5]{Saad2011} where the bounds are in terms of distances between one dimensional
subspaces.

\begin{theorem}[Accuracy of Reduced Problems]\label{thm:accuracy_rproblems}
Let ${\mathcal V}$ be a subspace of $\ell^2({\mathbb N})$ with dimension $J$ or larger.
\begin{enumerate}

\item[\bf (i)] 
For each $\omega$ in $\overline{\Omega}$, we have 
	\begin{equation}\label{eq:rate_of_conv}
		\lambda_J \left( \omega \right) 	\; - \; \lambda^{\mathcal V}_J \left( \omega \right) \;\; = \;\; O \left( \varepsilon^2 \right),
	\end{equation}
	where 
	\[
		\varepsilon \; 	:=	\; 
						\min
						\left\{
							d\left(  \widetilde{\mathcal S}, {\mathcal S}  \right)
								\;\;	|	\;\;
						\widetilde{\mathcal S} \text{ is a J dimensional subspace of } {\mathcal V}	
						\right\}
	\]
	and  ${\mathcal S}$ is the subspace spanned by the eigenvectors corresponding to the $J$ largest eigenvalues of ${\mathbf A}(\omega)$. 

\item[\bf (ii)] The equality
	\begin{equation}\label{eq:rate_of_conv2}
		\max_{\omega \in \Omega} \; \lambda_J \left( \omega \right)  \;		-		\;
			\max_{\omega \in \Omega} \; \lambda^{\mathcal V}_J \left( \omega \right)  	\;\;	=	\;\; 	O \left( \varepsilon_\ast^2 \right)
	\end{equation}
	holds. Here, $\varepsilon_\ast$ is given by
	\[
		\varepsilon_\ast \; 	:=	\; 
						\min
						\left\{
							d\left(  \widetilde{\mathcal S}, {\mathcal S}_\ast  \right)
								\;\;	|	\;\;
						\widetilde{\mathcal S} \text{ is a J dimensional subspace of } {\mathcal V}	
						\right\}
	\]
	for some subspace ${\mathcal S}_\ast$ spanned by the eigenvectors corresponding to the $J$ largest eigenvalues of ${\mathbf A}(\omega_\ast)$
	at some $\omega_\ast \in \arg \max_{\omega \in \Omega} \; \lambda_J \left( \omega \right) $.
\end{enumerate}
\end{theorem}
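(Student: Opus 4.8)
The plan is to treat part \textbf{(i)} as a quadratic eigenvalue perturbation estimate and to derive part \textbf{(ii)} from it together with the monotonicity of Lemma~\ref{lemma:monotonicity}. The whole argument rests on the maximin characterization (\ref{eq:maximin}), in which the quadratic form uses the original operator $\mathbf{A}(\omega)$ restricted to $\mathcal{V}$, and on the fact that the subspace $\mathcal{S}$ spanned by the leading eigenvectors is invariant under $\mathbf{A}(\omega)$; this invariance is what forces the error to be $O(\varepsilon^2)$ rather than $O(\varepsilon)$.

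For part \textbf{(i)} I would fix $\omega \in \overline{\Omega}$ and let $\widetilde{\mathcal{S}} \subseteq \mathcal{V}$ be a $J$-dimensional subspace attaining (or approaching) the minimal distance, so that $d(\widetilde{\mathcal{S}}, \mathcal{S}) = \varepsilon$. Every unit vector $\widetilde{v} \in \widetilde{\mathcal{S}}$ splits orthogonally as $\widetilde{v} = v_\parallel + v_\perp$ with $v_\parallel \in \mathcal{S}$ and $v_\perp \in \mathcal{S}^\perp$, and the definition of $d(\cdot,\cdot)$ gives $\| v_\perp \|_2 \leq \varepsilon$ uniformly over such $\widetilde{v}$. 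Because $\mathbf{A}(\omega)$ is self-adjoint and $\mathcal{S}$ is spanned by its eigenvectors, both $\mathcal{S}$ and $\mathcal{S}^\perp$ are invariant, so the cross terms $\langle \mathbf{A}(\omega) v_\parallel, v_\perp \rangle$ and $\langle \mathbf{A}(\omega) v_\perp, v_\parallel \rangle$ vanish and
\[
	\langle \mathbf{A}(\omega) \widetilde{v}, \widetilde{v} \rangle
	= \langle \mathbf{A}(\omega) v_\parallel, v_\parallel \rangle + \langle \mathbf{A}(\omega) v_\perp, v_\perp \rangle .
\]
On $\mathcal{S}$ the smallest eigenvalue is $\lambda_J(\omega)$, giving $\langle \mathbf{A}(\omega) v_\parallel, v_\parallel \rangle \geq \lambda_J(\omega) \| v_\parallel \|_2^2 = \lambda_J(\omega)(1 - \| v_\perp \|_2^2)$, while on $\mathcal{S}^\perp$ one has the crude bound $\langle \mathbf{A}(\omega) v_\perp, v_\perp \rangle \geq -\| \mathbf{A}(\omega) \|_2 \| v_\perp \|_2^2$. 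Adding these yields
\[
	\lambda_J(\omega) - \langle \mathbf{A}(\omega) \widetilde{v}, \widetilde{v} \rangle
	\leq \left( \lambda_J(\omega) + \| \mathbf{A}(\omega) \|_2 \right) \| v_\perp \|_2^2
	\leq \left( \lambda_J(\omega) + \| \mathbf{A}(\omega) \|_2 \right) \varepsilon^2 ,
\]
a bound uniform in $\widetilde{v}$. Taking the minimum over unit $\widetilde{v} \in \widetilde{\mathcal{S}}$ and using $\widetilde{\mathcal{S}}$ as a feasible inner subspace in (\ref{eq:maximin}) gives $\lambda_J^{\mathcal{V}}(\omega) \geq \lambda_J(\omega) - ( \lambda_J(\omega) + \| \mathbf{A}(\omega) \|_2 ) \varepsilon^2$; combined with $\lambda_J^{\mathcal{V}}(\omega) \leq \lambda_J(\omega)$ from Lemma~\ref{lemma:monotonicity} this is exactly (\ref{eq:rate_of_conv}), with a constant depending on $\omega$ but not on $\mathcal{V}$.

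For part \textbf{(ii)} I would apply part \textbf{(i)} at a global maximizer $\omega_\ast$ with $\mathcal{S} = \mathcal{S}_\ast$, obtaining $\lambda_J^{\mathcal{V}}(\omega_\ast) \geq \lambda_J(\omega_\ast) - O(\varepsilon_\ast^2)$. Then
\[
	\max_{\omega \in \Omega} \lambda_J^{\mathcal{V}}(\omega)
	\geq \lambda_J^{\mathcal{V}}(\omega_\ast)
	\geq \lambda_J(\omega_\ast) - O(\varepsilon_\ast^2)
	= \max_{\omega \in \Omega} \lambda_J(\omega) - O(\varepsilon_\ast^2) ,
\]
while Lemma~\ref{lemma:monotonicity} gives $\lambda_J^{\mathcal{V}}(\omega) \leq \lambda_J(\omega)$ for every $\omega$, hence $\max_\omega \lambda_J^{\mathcal{V}}(\omega) \leq \max_\omega \lambda_J(\omega)$; together these two inequalities yield (\ref{eq:rate_of_conv2}). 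The main obstacle is the quadratic rate in part \textbf{(i)}: a naive Lipschitz or first-order argument would only deliver an $O(\varepsilon)$ bound, and the gain of one order comes entirely from the invariance of $\mathcal{S}$ under the self-adjoint $\mathbf{A}(\omega)$, which annihilates the first-order cross terms and leaves a purely quadratic dependence on $v_\perp$. The remaining points are routine: checking that the minimizing subspace $\widetilde{\mathcal{S}}$ exists (or passing to an infimizing sequence when $\mathcal{V}$ is infinite dimensional, which only requires $\varepsilon < 1$ so that the orthogonal projection of $\mathcal{S}$ onto $\mathcal{V}$ remains $J$-dimensional), and observing that $\lambda_J(\omega) + \| \mathbf{A}(\omega) \|_2$ is finite by compactness and independent of $\mathcal{V}$.
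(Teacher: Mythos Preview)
Your proof is correct and follows essentially the same route as the paper: both arguments pick a $J$-dimensional subspace of $\mathcal{V}$ realizing the distance $\varepsilon$, decompose each unit test vector into its $\mathcal{S}$- and $\mathcal{S}^\perp$-components, use the $\mathbf{A}(\omega)$-invariance of $\mathcal{S}$ to kill the cross terms, and then feed the resulting quadratic lower bound into the maximin characterization (\ref{eq:maximin}) together with Lemma~\ref{lemma:monotonicity}; part~\textbf{(ii)} is likewise obtained in both by specializing part~\textbf{(i)} at a maximizer $\omega_\ast$. Your version is slightly more explicit about the constant and about the existence of the minimizing subspace $\widetilde{\mathcal{S}}$, but the key mechanism (vanishing first-order cross terms from invariance) is identical.
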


\begin{proof}
\textbf{(i)}
Let $\widehat{\mathcal S}$ be a $J$ dimensional subspace of ${\mathcal V}$ such that $\varepsilon = d\left(  \widehat{\mathcal S}, {\mathcal S} \right)$.
Furthermore, for a given unit vector $\widehat{v} \in \widehat{\mathcal S}$, let us use the notations
\[
	v\left( \widehat{v} \right)	\;	:=	\;
	\arg \min	\;
			\left\{	\| \widehat{v} - v \|_2 \;\; | \;\; 	v \in {\mathcal S}	\right\}
			\;\;\;\;	 {\rm and}	\;\;\;\;
	\delta\left( \widehat{v} \right)	\;	:=	\;
		\widehat{v}	-	v\left( \widehat{v} \right),
\] 
where $\| \delta\left( \widehat{v} \right) \|_2 = O(\varepsilon)$. Observe that the inner minimization problem in the
definition of $d\left(  \widehat{\mathcal S}, {\mathcal S} \right)$ is a least-squares problem, 
so the minimizer $v\left( \widehat{v} \right)$ of  $\| \widehat{v} - v \|_2$ over $v \in {\mathcal S}$ defined above is unique and
$\delta\left(  \widehat{v} \right) \bot \; {\mathcal S}$. Additionally, $\| v(\widehat{v}) \|_2^2 = 1 - O\left( \varepsilon^2 \right)$
due to the properties $\| \widehat{v} \|_2 = \| v(\widehat{v}) + \delta(\widehat{v}) \|_2 = 1$ and $v(\widehat{v}) \; \bot \; \delta(\widehat{v})$.

Now the maximin characterization (\ref{eq:maximin}) for $\lambda_J^{\mathcal V}(\omega)$ implies
\begin{eqnarray*}
\lambda^{\mathcal V}_J(\omega) \;\;  \geq  \;\;	\min_{\widehat{v} \in \widehat{\mathcal S}, \; \| \widehat{v} \|_2 = 1} \;\; \langle {\mathbf A}(\omega) \widehat{v}, \widehat{v} \rangle
		 &=& 
	\min_{\widehat{v} \in \widehat{\mathcal S}, \; \| \widehat{v} \|_2 = 1} \;\; 
		\langle {\mathbf A}(\omega) \left[ v\left( \widehat{v} \right) + \delta\left( \widehat{v} \right) \right], \left[ v\left( \widehat{v} \right) + \delta\left( \widehat{v} \right) \right] \rangle \\
		&\geq&
		\min_{\widehat{v} \in \widehat{\mathcal S}, \; \| \widehat{v} \|_2 = 1} \;\; \langle {\mathbf A}(\omega) v(\widehat{v}), v(\widehat{v}) \rangle	\\
		& &	\hskip 4ex
+ \min_{\widehat{v} \in \widehat{\mathcal S}, \; \| \widehat{v} \|_2 = 1} \;\; 2\Re \left( \langle {\mathbf A}(\omega) v\left( \widehat{v} \right), \delta\left( \widehat{v} \right) \rangle \right)  \\
		& &	\hskip 4ex
	+ \min_{\widehat{v} \in \widehat{\mathcal S}, \; \| \widehat{v} \|_2 = 1} \;\; \langle {\mathbf A}(\omega) \delta\left( \widehat{v} \right), \delta\left( \widehat{v} \right) \rangle \\
		& \geq &	\lambda_J \left(  \omega  \right) \min_{\widehat{v} \in \widehat{\mathcal S}, \; \| \widehat{v} \|_2 = 1} \| v(\widehat{v}) \|_2^2	
										\;	-	\;	O(\varepsilon^2) \\
		&=& \lambda_J \left(  \omega  \right) \; - \; O(\varepsilon^2).
\end{eqnarray*}
Above, on the third line, we note that $\langle {\mathbf A}(\omega) v\left( \widehat{v} \right), \delta\left( \widehat{v} \right) \rangle = 0$ due to 
the fact ${\mathbf A}(\omega) v\left( \widehat{v} \right) \in {\mathcal S}$ and $\delta\left(  \widehat{v} \right) \bot \; {\mathcal S}$,
and on the second to the last line, we employ $\| v(\widehat{v}) \|_2^2 = 1 - O\left( \varepsilon^2 \right)$.
The desired equality \eqref{eq:rate_of_conv} follows from $\lambda^{\mathcal V}_J \left( \omega \right)  \leq  \lambda_J \left( \omega \right)$
due to Lemma \ref{lemma:monotonicity}.

\noindent
\textbf{(ii)} This is an immediate corollary of (i). In particular,
\[
		O(\varepsilon^2_\ast)	\;\; = \;\;			\lambda_J\left( \omega_\ast \right)	\; - \;	 \lambda^{\mathcal V}_J \left( \omega_\ast \right)	
							\;\; \geq \;\;			
	\max_{\omega \in \Omega} \; \lambda_J \left( \omega \right)  \;		-		\; \max_{\omega \in \Omega} \; \lambda^{\mathcal V}_J (\omega)
\]
combined with  
$
	\max_{\omega \in \Omega} \; \lambda^{\mathcal V}_J (\omega)	\; \leq \;	
	\max_{\omega \in \Omega} \; \lambda_J (\omega)
$ 
(due to Lemma \ref{lemma:monotonicity}) yield (\ref{eq:rate_of_conv2}).
\end{proof}

\subsection{The Greedy Procedure}
The basic greedy procedure solves the reduced eigenvalue optimization problem (\ref{eq:reduced_problems})
for a given subspace ${\mathcal V}$. Denoting a global optimizer of the reduced problem with $\omega_\ast$,
the subspace is expanded with the addition of the eigenvectors corresponding to $\lambda_1(\omega_\ast), \dots, \lambda_J(\omega_\ast)$,
then this is repeated with the expanded subspace. A formal description is given in Algorithm \ref{alg},
where ${\mathcal S}_k$ denotes the subspace at the $k$th step of the procedure, and 
$\lambda^{(k)}_J(\omega) := \lambda^{{\mathcal S}_k}_J(\omega)$. The reduced eigenvalue optimization
problems on line 5 are nonsmooth and nonconvex. The description assumes that these problems can be
solved globally. The algorithm in \cite{Mengi2014} works well in practice for this purpose when 
the number of parameters, $d$, is small.
These reduced problems are computationally cheap to solve, the main computational burden comes
from line 6 which requires the computation of eigenvectors of the full problem. In the finite dimensional
case, these large eigenvalue problems are typically solved by means of an iterative method, 
for instance by Lanczos' method.

\begin{algorithm}
 \begin{algorithmic}[1]
  \REQUIRE{ A parameter dependent compact self-adjoint operator ${\mathbf A}(\omega)$ of the form (\ref{eq:mat_func}), 
  and a compact subset $\;\; \Omega \subset \overline{\Omega} \;\;$ of ${\mathbb R}^d$.}
\STATE $\omega^{(1)} \gets$ a random point in $\Omega$.
\STATE $s^{(1)}_1, \dots, s^{(1)}_J \gets $ eigenvectors corresponding to $\lambda_1(\omega^{(1)}), \dots, \lambda_J(\omega^{(1)})$.
\STATE ${\mathcal S}_1	\;	\gets \; 
		{\rm span} \left\{  s^{(1)}_1, \dots, s^{(1)}_J  \right\}.
		$
	
\FOR{$k \; = \; 2, \; 3, \; \dots$}
	\STATE  
		$\omega^{(k)}$ \hskip -0.4ex $\gets$ \hskip -0.4ex 
		any $\omega_\ast \in \arg\min_{\omega \in \Omega} \lambda^{(k-1)}_J(\omega) \; $ for the minimization problem \textbf{(MN)}, or \\
			$\omega^{(k)}$ \hskip -0.4ex $\gets$ \hskip -0.4ex
			any $\omega_\ast \in \arg\max_{\omega \in \Omega} \lambda^{(k-1)}_J(\omega) \;$ for the maximization problem \textbf{(MX)}.
	\STATE $s^{(k)}_1, \dots, s^{(k)}_J  \gets $ eigenvectors corresponding to $\lambda_1(\omega^{(k)}), \dots, \lambda_J(\omega^{(k)})$.
	\STATE ${\mathcal S}_k \; \gets \; {\mathcal S}_{k-1} \oplus {\rm span} \left\{  s^{(k)}_1, \dots, s^{(k)}_J  \right\}$.
\ENDFOR
 \end{algorithmic}
\caption{The Greedy Subspace Procedure}
\label{alg}
\end{algorithm}

As numerical experiments demonstrate (see Section 5), the power of this greedy subspace procedure is that high accuracy 
is often reached after a small number of steps, for reduced problems of small size. This can mostly be
attributed to the following interpolatory properties between $\lambda^{(k)}_J(\omega)$ and $\lambda_J(\omega)$.

\begin{lemma}[Hermite Interpolation]\label{thm:first_der}
The following hold regarding Algorithm \ref{alg}:
\begin{enumerate}
	\item[\bf (i)] $\lambda_J\left( \omega^{(\ell)} \right)	\; = \;	\lambda^{(k)}_J\left( \omega^{(\ell)}	 \right)$
	for $\ell = 1,\dots,k$;
	\item[\bf (ii)] If $J > 1$, then $\lambda_{J-1}\left( \omega^{(\ell)} \right)	\; = \;	\lambda^{(k)}_{J-1}\left( \omega^{(\ell)}	 \right)$
	for $\ell = 1,\dots,k$;
	\item[\bf (iii)] If $\lambda_J \left( \omega^{(\ell)} \right)$ is simple, then
	$\lambda^{(k)}_J \left( \omega^{(\ell)} \right)$ is also simple for $\ell = 1,\; 2,\; \dots, k$;
	\item[\bf (iv)] If  $\lambda_J \left( \omega^{(\ell)} \right)$ is simple, then
	$\; \nabla \lambda_J \left( \omega^{(\ell)} \right)	\; = \;	\nabla \lambda^{(k)}_J \left( \omega^{(\ell)} \right) \;\;$
	for $\ell = 1,\; 2,\; \dots,k$.
\end{enumerate}
\end{lemma}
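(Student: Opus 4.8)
The plan is to exploit the nestedness of the subspaces built by Algorithm~\ref{alg}. Since line~8 sets ${\mathcal S}_k = {\mathcal S}_{k-1} \oplus {\rm span}\{s_1^{(k)}, \dots, s_J^{(k)}\}$, the family is increasing, so ${\mathcal S}_\ell \subseteq {\mathcal S}_k$ for every $\ell \le k$; in particular ${\rm span}\{s_1^{(\ell)}, \dots, s_J^{(\ell)}\} \subseteq {\mathcal S}_k$, where these are eigenvectors for the $J$ largest eigenvalues of ${\mathbf A}(\omega^{(\ell)})$. This is precisely the hypothesis of the Interpolatory Property (Lemma~\ref{lemma:interpolatary}) with ${\mathcal V} = {\mathcal S}_k$ and $\omega = \omega^{(\ell)}$, so part~\textbf{(i)} follows immediately. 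For part~\textbf{(ii)} I would observe that ${\rm span}\{s_1^{(\ell)}, \dots, s_{J-1}^{(\ell)}\} \subseteq {\mathcal S}_k$ as well, and re-invoke Lemma~\ref{lemma:interpolatary} with $J$ replaced by $J-1$; this is legitimate since $\dim {\mathcal S}_k \ge J > J-1$ and $\lambda^{(k)}_{J-1}(\omega^{(\ell)}) \ge \lambda^{(k)}_J(\omega^{(\ell)}) > 0$, so the maximin supremum is attained.

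For part~\textbf{(iii)} I would combine the interpolation equalities of \textbf{(i)}--\textbf{(ii)} with the simplicity hypothesis. Simplicity of $\lambda_J(\omega^{(\ell)})$ means $\lambda_{J-1}(\omega^{(\ell)}) > \lambda_J(\omega^{(\ell)}) > \lambda_{J+1}(\omega^{(\ell)})$, the left inequality being vacuous when $J=1$. The upper separation transfers to the reduced problem via \textbf{(i)}--\textbf{(ii)}, namely $\lambda^{(k)}_{J-1}(\omega^{(\ell)}) = \lambda_{J-1}(\omega^{(\ell)}) > \lambda_J(\omega^{(\ell)}) = \lambda^{(k)}_J(\omega^{(\ell)})$. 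For the lower separation, whenever $\dim {\mathcal S}_k > J$ so that $\lambda^{(k)}_{J+1}$ exists, the Monotonicity Lemma~\ref{lemma:monotonicity} applied at index $J+1$ gives $\lambda^{(k)}_{J+1}(\omega^{(\ell)}) \le \lambda_{J+1}(\omega^{(\ell)}) < \lambda_J(\omega^{(\ell)}) = \lambda^{(k)}_J(\omega^{(\ell)})$; when $\dim {\mathcal S}_k = J$ there is no $\lambda^{(k)}_{J+1}$ and the upper separation alone suffices. Hence $\lambda^{(k)}_J(\omega^{(\ell)})$ is isolated from its neighbours, i.e. simple.

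For part~\textbf{(iv)} I would apply the first-order perturbation formula for a simple eigenvalue of an analytic self-adjoint family. Let ${\mathbf V}$ be the coordinate operator as in (\ref{eq:change_coor}) for an orthonormal basis of ${\mathcal S}_k$, so the reduced data are ${\mathbf V}^\ast {\mathbf A}(\omega) {\mathbf V}$ and ${\mathbf V}^\ast {\mathbf A}_p {\mathbf V}$. Writing $u$ for the unit eigenvector of ${\mathbf A}(\omega^{(\ell)})$ at $\lambda_J(\omega^{(\ell)})$ (equal to $s_J^{(\ell)}$ up to an immaterial phase) and using $\partial_{\omega_i}{\mathbf A}(\omega) = \sum_{p=1}^\kappa \partial_{\omega_i} f_p(\omega) {\mathbf A}_p$, one has $\partial_{\omega_i}\lambda_J(\omega^{(\ell)}) = \sum_p \partial_{\omega_i} f_p(\omega^{(\ell)}) \langle {\mathbf A}_p u, u\rangle$, and the analogous formula holds for $\lambda^{(k)}_J$ in terms of the unit eigenvector $\alpha$ of ${\mathbf V}^\ast {\mathbf A}(\omega^{(\ell)}){\mathbf V}$, whose simplicity is supplied by part~\textbf{(iii)}. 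The crux is that both formulas yield the same numbers: by Lemma~\ref{lemma:interpolatary}\textbf{(ii)}, $\alpha = {\mathbf V}^\ast s_J^{(\ell)}$, and since $s_J^{(\ell)} \in {\mathcal S}_k$ we have ${\mathbf V}{\mathbf V}^\ast s_J^{(\ell)} = s_J^{(\ell)}$, whence ${\mathbf V}\alpha = s_J^{(\ell)} = u$ and $\|\alpha\|_2 = 1$. Therefore $\langle {\mathbf V}^\ast {\mathbf A}_p {\mathbf V}\alpha, \alpha\rangle = \langle {\mathbf A}_p {\mathbf V}\alpha, {\mathbf V}\alpha\rangle = \langle {\mathbf A}_p u, u\rangle$ for every $p$, and the two gradients coincide.

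The main obstacle is the justification of the perturbation formula underlying \textbf{(iv)}, not any of the algebra. For the finite-dimensional reduced operator it is the classical derivative formula for a simple eigenvalue, but for the full, infinite-dimensional ${\mathbf A}(\omega)$ one must appeal to analytic perturbation theory for compact self-adjoint operators to guarantee that an isolated simple eigenvalue is real-analytic in $\omega$ with the stated first derivative. Once differentiability is secured, the identity ${\mathbf V}\alpha = u$ reduces the equality of gradients to a one-line computation; isolating and citing the correct infinite-dimensional perturbation result is the only genuinely delicate point.
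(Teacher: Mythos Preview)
Your argument is correct. Parts \textbf{(i)}, \textbf{(ii)} and \textbf{(iv)} proceed exactly as in the paper: the inclusion ${\rm span}\{s_1^{(\ell)},\dots,s_J^{(\ell)}\}\subseteq{\mathcal S}_k$ feeds directly into Lemma~\ref{lemma:interpolatary}, and for \textbf{(iv)} both you and the paper identify the reduced eigenvector via Lemma~\ref{lemma:interpolatary}\textbf{(ii)}, invoke the Rellich/Lancaster derivative formula, and reduce the gradient identity to $\langle {\mathbf V}^\ast {\mathbf A}_p {\mathbf V}\alpha,\alpha\rangle=\langle {\mathbf A}_p {\mathbf V}\alpha,{\mathbf V}\alpha\rangle$.

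Part \textbf{(iii)} is where you diverge. The paper argues by contraposition: assuming $\lambda_J^{(k)}(\omega^{(\ell)})$ is a multiple eigenvalue of ${\mathbf S}_k^\ast{\mathbf A}(\omega^{(\ell)}){\mathbf S}_k$, it takes two orthogonal unit eigenvectors $\hat\alpha,\tilde\alpha$ in ${\mathbb C}^m$ and, using part~\textbf{(i)} together with the maximin characterization~(\ref{eq:maximin}), shows that the lifted vectors ${\mathbf S}_k\hat\alpha$ and ${\mathbf S}_k\tilde\alpha$ are orthogonal eigenvectors of the full operator ${\mathbf A}(\omega^{(\ell)})$ for the eigenvalue $\lambda_J(\omega^{(\ell)})$, contradicting its simplicity. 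Your route is instead a direct two-sided gap argument: the upper gap $\lambda_{J-1}^{(k)}>\lambda_J^{(k)}$ is inherited from the full problem via parts \textbf{(i)}--\textbf{(ii)}, and the lower gap $\lambda_J^{(k)}>\lambda_{J+1}^{(k)}$ comes from monotonicity (Lemma~\ref{lemma:monotonicity}) applied at index $J+1$. Your argument is shorter and more transparent, and it makes the role of part~\textbf{(ii)} explicit; the paper's argument, by contrast, does not rely on \textbf{(ii)} at all and instead extracts everything from the variational principle. Either approach is sound; just be aware that invoking Lemma~\ref{lemma:monotonicity} at index $J+1$ is a (trivial) extension of its statement, which is literally written only for the fixed index $J$.
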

\begin{proof} 
\textbf{(i-ii)} Lines 2, 3, 6 and 7 of Algorithm \ref{alg} imply that $s^{(\ell)}_1, \dots, s^{(\ell)}_J \in {\mathcal S}_k$
for $\ell = 1, \dots, k$. An application of part (i) of Lemma \ref{lemma:interpolatary} with ${\mathcal V} = {\mathcal S}_k$ yields
$\lambda_j\left( \omega^{(\ell)} \right)  =  \lambda^{{\mathcal S}_k}_j \left( \omega^{(\ell)} \right)  =	\lambda^{(k)}_j \left( \omega^{(\ell)} \right)$
for $\ell = 1, \dots, k$ and $j = J$, as well as $j = J-1$ if $J > 1$, as desired.

\noindent
\textbf{(iii)} Suppose $\lambda^{(k)}_J \left(   \omega^{(\ell)}   \right)$ is not simple for some $\ell \in \{ 1, 2, \dots, k \}$. In this case
there must exist two mutually orthogonal unit eigenvectors $\hat{\alpha}, \tilde{\alpha} \in {\mathbb C}^m$ corresponding to it. 
Let us denote by ${\mathbf S}_k$ the operator as in (\ref{eq:change_coor}) in terms of a basis $S_k$ for ${\mathcal S}_k$,
and ${\mathbf A}^{{\mathcal S}_k} \left( \omega^{(\ell)} \right) = {\mathbf S}_k^\ast {\mathbf A} \left( \omega^{(\ell)} \right) {\mathbf S}_k$.
It follows from part (i) that
\begin{eqnarray*}
	\lambda_J \left(  \omega^{(\ell)} \right)	\; = \;
	\lambda^{(k)}_J \left(  \omega^{(\ell)}  \right)	 = 	
	\left\langle {\mathbf A}	\left(	\omega^{(\ell)} \right)	{\mathbf S}_k \hat{\alpha}, {\mathbf S}_k \hat{\alpha} \right\rangle  = 
	\min_{\alpha \in \widehat{\mathcal S}, \| \alpha \|_2 = 1}	\; 	\left\langle   {\mathbf A} \left( \omega^{(\ell)} \right)  {\mathbf S}_k \alpha, {\mathbf S}_k \alpha \right\rangle	\\
	 =  \left\langle	{\mathbf A}	\left(	\omega^{(\ell)} \right) {\mathbf S}_k \tilde{\alpha},	{\mathbf S}_k \tilde{\alpha} \right\rangle  = 
	\min_{\alpha \in \widetilde{\mathcal S}, \| \alpha \|_2 = 1}	\; 	\left\langle   {\mathbf A} \left( \omega^{(\ell)} \right)  {\mathbf S}_k \alpha, {\mathbf S}_k \alpha \right\rangle
\end{eqnarray*}
for some $J$ dimensional subspaces $\widehat{\mathcal S}, \widetilde{\mathcal S}$ of $\ell^2({\mathbb N})$ such that
$\hat{\alpha} \in \widehat{\mathcal S}$, $\tilde{\alpha} \in \widetilde{\mathcal S}$.
This shows that ${\mathbf S}_k \hat{\alpha}, \: {\mathbf S}_k \tilde{\alpha} \in \ell^2({\mathbb N})$ are mutually orthogonal eigenvectors 
corresponding to $\lambda_J 	\left(	\omega^{(\ell)} \right)$, so $\lambda_J\left( \omega^{(\ell)} \right)$ is not simple either.

\noindent
\textbf{(iv)} It follows from part (iii) that $\lambda^{(k)}_J\left( \omega^{(\ell)} \right)$ is also simple, so
both $\lambda_J\left( \omega \right)$ and $\lambda^{(k)}_J \left( \omega \right)$ 
are differentiable at $\omega^{(\ell)}$, furthermore the associated 
unit eigenvectors can be chosen in a way so that they are also differentiable at $\omega^{(\ell)}$ 
(see \cite[pages 57-58, Theorem 1]{Rellich1969} for the differentiability of $\lambda_J(\omega)$ and the associated unit eigenvector,
and \cite[pages 33-34, Theorem 1]{Rellich1969} for the differentiability of $\lambda_J^{(k)}(\omega)$ and the associated unit eigenvector).
By Lemma \ref{lemma:interpolatary} part (ii), the eigenvector 
	$\alpha_J$ of $\; {\mathbf A}^{{\mathcal S}_k} \left(	  \omega^{(\ell)} 	\right) = {\mathbf S}_k^\ast {\mathbf A} \left(	  \omega^{(\ell)} 	\right) {\mathbf S}_k \;$ 
corresponding to the eigenvalue $\lambda^{(k)}_J \left(	\omega^{(\ell)}	\right) = \lambda^{{\mathcal S}_k}_J \left( \omega^{(\ell)} \right)$ 
satisfies  $\alpha_J = {\mathbf S}_k^\ast s^{(\ell)}_J$. Equivalently, we have $s^{(\ell)}_J = {\mathbf S}_k \alpha_J$ 
(since $s^{(\ell)}_J \in {\mathcal S}_k$). By employing the analytical formulas for the derivatives of eigenvalue 
functions (see \cite{Lancaster1964} for the finite dimensional matrix-valued case whose derivation exploits the Hermiticity
of the matrix-valued function; the generalization to the infinite dimensional case, leading to the 
formula $\partial \lambda_J(\omega)/\partial \omega_q = \langle \partial {\mathbf A} (\omega) / \partial \omega_q s_J, s_J \rangle$
where $s_J$ is a unit eigenvector corresponding to $\lambda_J(\omega)$, is straightforward by making use of the self-adjointness 
of ${\mathbf A}(\omega)$), for $q = 1, \dots, d$ we obtain
\begin{eqnarray*}
	\frac{\partial \lambda^{(k)}_J \left( \omega^{(\ell)}  \right) } {\partial \omega_q}
				\;\; &  =  & \;\; 
	\left\langle	\frac{\partial {\mathbf A}^{{\mathcal S}_k} \left( \omega^{(\ell)} \right) }{\partial \omega_q}	\alpha_J,  \alpha_J 	\right\rangle	\\
				\;\; &  =  & \;\;
	\left\langle 	\frac{\partial {\mathbf A} \left( \omega^{(\ell)} \right) }{\partial \omega_q}	{\mathbf S}_k \alpha_J,  {\mathbf S}_k \alpha_j	\right\rangle	\\
				\;\; & = & \;\;
	\left\langle		\frac{\partial {\mathbf A} \left( \omega^{(\ell)} \right) }{\partial \omega_q}	s^{(\ell)}_J,  s^{(\ell)}_J		\right\rangle
				\;\; = \;\;
	\frac{\partial \lambda_J \left(	\omega^{(\ell)} \right) } {\partial \omega_q}.
\end{eqnarray*}
This completes the proof.
\end{proof}

\subsection{The Extended Greedy Procedure}
To better exploit the Hermite interpolation properties of Lemma \ref{thm:first_der}, we extend the basic
greedy procedure of the previous subsection with the inclusion of additional eigenvectors in the
subspaces at points close to the optimizers of the reduced problems. The purpose here is to
achieve 
\[
	\lim_{k\rightarrow \infty} \:
	\left\| \nabla^2 \lambda_J ( \omega^{(k)} ) - \nabla^2 \lambda_J^{(k)} ( \omega^{(k)} ) \right\|_2 \; = \; 0
\]
in addition to $\lambda_J(\omega^{(k)}) = \lambda_J^{(k)} (\omega^{(k)})$ and 
$\nabla \lambda_J(\omega^{(k)}) = \nabla \lambda_J^{(k)} (\omega^{(k)})$. These properties enable us
to make an analogy with a quasi-Newton method for unconstrained smooth optimization, and come
up with a theoretical superlinear rate-of-convergence result in the next section.

In the extended procedure also the reduced eigenvalue optimization problem (\ref{eq:reduced_problems}) 
is solved for a subspace ${\mathcal V}$ already constructed. 
Denoting the optimizer of the reduced problem with $\omega_\ast$,
in addition to the eigenvectors corresponding to $\lambda_1(\omega_\ast), \dots, \lambda_{J}(\omega_\ast)$,
the eigenvectors corresponding to $\lambda_1(\omega_\ast + h e_{pq}), \dots, \lambda_J(\omega_\ast + h e_{pq})$
are added into the subspace ${\mathcal V}$ for $h$ decaying to zero if convergence occurs, for $p = 1, \dots, d$ and $q = p, \dots, d$.
Here $e_{p q} := (1/\sqrt{2}) ( e_p  + e_q  )$ for $p \neq q$ as well as $e_{pp} := e_p$. 
We provide a formal description of this extended subspace procedure in Algorithm \ref{alge} below.

\begin{algorithm}
 \begin{algorithmic}[1]
  \REQUIRE{ A parameter dependent compact self-adjoint operator ${\mathbf A}(\omega)$ of the form (\ref{eq:mat_func}), 
  and a compact subset $\;\; \Omega \subset \overline{\Omega} \;\;$ of ${\mathbb R}^d$.}
\STATE $\omega^{(1)} \gets$ a random point in $\Omega$.
\STATE $s^{(1)}_1, \dots, s^{(1)}_{J} \gets $ eigenvectors corresponding to $\lambda_1(\omega^{(1)}), \dots, \lambda_{J}(\omega^{(1)})$.
\STATE ${\mathcal S}_1	\;	\gets \; 
		{\rm span} \left\{  s^{(1)}_1, \dots, s^{(1)}_{J}  \right\}.
		$
	
\FOR{$k \; = \; 2, \; 3, \; \dots$}
	\STATE  
			$\omega^{(k)}$ \hskip -0.4ex $\gets$ \hskip -0.4ex any $\omega_\ast \in \arg\min_{\omega \in \Omega} \lambda^{(k-1)}_J(\omega) \;$ for the minimization problem \textbf{(MN)}, or \\
			$\omega^{(k)}$ \hskip -0.4ex $\gets$ \hskip -0.4ex any $\omega_\ast \in \arg\max_{\omega \in \Omega} \lambda^{(k-1)}_J(\omega) \;$ for the maximization problem \textbf{(MX)}.
	\STATE $s^{(k)}_1, \dots, s^{(k)}_{J}  \gets $ eigenvectors corresponding to $\lambda_1(\omega^{(k)}), \dots, \lambda_{J}(\omega^{(k)})$. 
	\STATE $h^{(k)} \gets \| \omega^{(k)} - \omega^{(k-1)} \|_2$	
	\FOR{$p \; = \; 1, \dots, d, \;\; q \; = \; p, \dots, d$}
	\STATE $s^{(k)}_{1,p q}, \dots, s^{(k)}_{J, p q}  \gets $ eigenvectors corresponding to \\
		\hskip 18ex									$\lambda_1(\omega^{(k)} + h^{(k)} e_{pq}), \dots, \lambda_J(\omega^{(k)} + h^{(k)} e_{pq})$.
	\ENDFOR
	\STATE ${\mathcal S}_k \; \gets \; {\mathcal S}_{k-1} \oplus {\rm span} \left\{  s^{(k)}_1, \dots, s^{(k)}_{J}  \right\} 
				\oplus \left\{ \bigoplus_{p = 1, q = p}^d {\rm span} \left\{  s^{(k)}_{1,p q}, \dots, s^{(k)}_{J, p q}  \right\} \right\}$.
\ENDFOR
 \end{algorithmic}
\caption{The Extended Greedy Subspace Procedure}
\label{alge}
\end{algorithm}

The reduced eigenvalue functions of the extended procedure possesses additional interpolatory properties
stated in the lemmas below. Their proofs are similar to the proofs for Lemma \ref{thm:first_der}, so we omit them. 
\begin{lemma}[Extended Hermite Interpolation]\label{thm:first_der_extended}
The assertions of Lemma \ref{thm:first_der} hold for Algorithm \ref{alge}. Additionally, we have
\begin{enumerate}
	\item[\bf (i)]
	$\lambda_J\left( \omega^{(\ell)} + h^{(\ell)} e_{pq} \right)	\; = \;	\lambda^{(k)}_J\left( \omega^{(\ell)} + h^{(\ell)} e_{pq}	 \right) \;\;$,
	\item[\bf (ii)] If $\lambda_J \left( \omega^{(\ell)} + h^{(\ell)} e_{pq} \right)$ is simple, then
	$\lambda^{(k)}_J \left( \omega^{(\ell)} + h^{(\ell)} e_{pq} \right)$ is also simple,
	\item[\bf (iii)] If  $\lambda_J \left( \omega^{(\ell)} + h^{(\ell)} e_{pq} \right)$ is simple,
	$\; \nabla \lambda_J  \left( \omega^{(\ell)} + h^{(\ell)} e_{pq} \right)	=	\nabla \lambda^{(k)}_J \left( \omega^{(\ell)} + h^{(\ell)} e_{pq} \right) \;\;$
\end{enumerate}
for every $\ell = 1,\dots,k$, $\: p = 1,\dots,d$ and $q = p,\dots,d$. 
\end{lemma}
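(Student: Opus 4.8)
The plan is to mirror the proof of Lemma \ref{thm:first_der} almost verbatim, exploiting the fact that the subspaces ${\mathcal S}_k$ built by Algorithm \ref{alge} are supersets of those built by Algorithm \ref{alg} and, in addition, contain the eigenvectors computed at the shifted points $\omega^{(\ell)} + h^{(\ell)} e_{pq}$. First I would record the relevant membership facts. The construction of ${\mathcal S}_k$ in Algorithm \ref{alge} accumulates $s^{(\ell)}_1, \dots, s^{(\ell)}_J \in {\mathcal S}_k$ across the outer iterations for every $\ell = 1, \dots, k$, exactly as in Algorithm \ref{alg}; consequently the proofs of all four assertions of Lemma \ref{thm:first_der} carry over unchanged, since those arguments invoke only the inclusion of the base-point eigenvectors in ${\mathcal S}_k$ together with Lemmas \ref{lemma:interpolatary} and \ref{lemma:monotonicity}. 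Enlarging the subspace cannot disturb these equalities: the interpolation identity at $\omega^{(\ell)}$ is pinned from above by $\lambda^{(k)}_J(\omega^{(\ell)}) \leq \lambda_J(\omega^{(\ell)})$ (Lemma \ref{lemma:monotonicity}) and from below by the attainment argument of Lemma \ref{lemma:interpolatary}(i), so it persists for any subspace that still contains $s^{(\ell)}_1, \dots, s^{(\ell)}_J$.

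For the three additional assertions, I would note that the subspace-update step of Algorithm \ref{alge} also accumulates, across the outer iterations, the eigenvectors $s^{(\ell)}_{1,pq}, \dots, s^{(\ell)}_{J,pq}$ associated with $\lambda_1(\omega^{(\ell)} + h^{(\ell)} e_{pq}), \dots, \lambda_J(\omega^{(\ell)} + h^{(\ell)} e_{pq})$, so that ${\rm span}\{ s^{(\ell)}_{1,pq}, \dots, s^{(\ell)}_{J,pq} \} \subseteq {\mathcal S}_k$ for all $\ell \leq k$, $p = 1, \dots, d$ and $q = p, \dots, d$. Assertion (i) then follows by a direct application of Lemma \ref{lemma:interpolatary}(i) with ${\mathcal V} = {\mathcal S}_k$ at the shifted point $\widetilde{\omega} := \omega^{(\ell)} + h^{(\ell)} e_{pq}$: since the eigenvectors for the $J$ largest eigenvalues of ${\mathbf A}(\widetilde{\omega})$ lie in ${\mathcal S}_k$, one obtains $\lambda_J(\widetilde{\omega}) = \lambda^{{\mathcal S}_k}_J(\widetilde{\omega}) = \lambda^{(k)}_J(\widetilde{\omega})$. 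Assertions (ii) and (iii) are precisely the shifted-point analogues of parts (iii) and (iv) of Lemma \ref{thm:first_der}, and I would prove them with the same two devices: for simplicity, the maximin characterization lifts a hypothetical pair of orthogonal reduced eigenvectors through ${\mathbf S}_k$ to a pair of orthogonal eigenvectors of ${\mathbf A}(\widetilde{\omega})$ at $\lambda_J(\widetilde{\omega})$, contradicting its simplicity; for the gradient, Lemma \ref{lemma:interpolatary}(ii) identifies the relevant reduced eigenvector as ${\mathbf S}_k^\ast s^{(\ell)}_{J,pq}$, whereupon the first-order eigenvalue perturbation formula yields $\nabla \lambda^{(k)}_J(\widetilde{\omega}) = \nabla \lambda_J(\widetilde{\omega})$.

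I expect the only genuine subtlety, rather than a true obstacle, to be the requirement that each shifted point $\widetilde{\omega} = \omega^{(\ell)} + h^{(\ell)} e_{pq}$ lie in $\overline{\Omega}$, since the spectral decomposition of ${\mathbf A}(\widetilde{\omega})$ and, in particular, the analyticity-based first-order perturbation formula underpinning assertion (iii) are only guaranteed on the open set $\overline{\Omega}$ on which the $f_\ell$ are defined and real-analytic. Because $\omega^{(\ell)} \in \Omega$ and $h^{(\ell)} = \| \omega^{(\ell)} - \omega^{(\ell-1)} \|_2 \to 0$ whenever the procedure converges, the shifts become arbitrarily small and the perturbed points eventually enter $\overline{\Omega}$, whose openness around the compact set $\Omega$ provides a positive margin; one may therefore restrict attention to the indices for which this holds. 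Apart from this bookkeeping, no new estimates are needed, which is exactly why the authors are content to omit the proof.
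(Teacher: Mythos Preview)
Your proposal is correct and matches the paper's approach exactly: the paper omits the proof entirely, stating only that it is similar to that of Lemma~\ref{thm:first_der}, and your write-up is precisely the natural unpacking of that remark---apply Lemma~\ref{lemma:interpolatary} at the shifted points $\omega^{(\ell)}+h^{(\ell)}e_{pq}$ using the fact that Algorithm~\ref{alge} places $s^{(\ell)}_{1,pq},\dots,s^{(\ell)}_{J,pq}$ into ${\mathcal S}_k$, and then rerun the simplicity and gradient arguments verbatim. Your observation about $\widetilde{\omega}\in\overline{\Omega}$ is a legitimate caveat that the paper does not explicitly address (nor does it note that $h^{(1)}$ is undefined, so the shifted assertions are really for $\ell\geq 2$), but neither point affects the substance of the argument.
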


The main motivation for the inclusion of additional eigenvectors in the subspaces is the deduction of theoretical
bounds on the proximity of the second derivatives, which we present next. This result is initially established
under the assumption that the third derivatives of the reduced eigenvalue functions $\omega\mapsto \lambda_J^{(k)}(\omega)$ 
are bounded uniformly with respect to $k$ provided $k$ is large enough. Subsequently, we show in Proposition \ref{prop:uniformb_3rdd}
that this assumption is always satisfied, hence can be dropped. 
\begin{lemma}\label{thm:sec_der_ext}
Suppose that the sequence $\{ \omega^{(k)} \}$ by Algorithm~\ref{alge} (or Algorithm~\ref{alg} when $d=1$ by
defining $h^{(k)} := | \omega^{(k)} - \omega^{(k-1)} |$) is convergent, that its limit
$\omega_\ast := \lim_{k\rightarrow \infty} \omega^{(k)}$ lies strictly in the interior of $\Omega$, and that 
$\lambda_J(\omega_\ast)$ is simple. Assume furthermore that in an open ball containing $\omega_*$, 
all third derivatives of functions $\omega\mapsto \lambda_J^{(k)}(\omega)$ are bounded 
uniformly with respect to $k\geq k_0$, with $k_0$ sufficiently large.
 Then the following assertions hold:
\begin{enumerate}
\item[\bf (i)]
There exists a constant $C > 0$ such that
\begin{equation}\label{eq:accuracy_sec_der}
	\left|
		\frac{\partial^2 \lambda_J \left( \omega^{(k)} \right)}{\partial \omega_p \partial \omega_q}  -  \frac{\partial^2 \lambda^{(k)}_J \left( \omega^{(k)} \right)}
			{\partial \omega_p \partial\omega_q}
	 \right|_2
	 	\; 	\leq		\;
	C h^{(k)}	\quad {\rm for} \; p,q = 1,\dots,d,
\end{equation}
in particular
\[
	\left\|
	\nabla^2 \lambda_J \left( \omega^{(k)} \right)
				-
	\nabla^2 \lambda^{(k)}_J \left( \omega^{(k)} \right)
	 \right\|_2
	 	\; 	\leq		\;
	d C h^{(k)},
\]
for all $k$ large enough;
\item[\bf (ii)] Additionally, if $\nabla^2 \lambda_J(\omega_\ast)$ is invertible, then
	\begin{equation}\label{eq:inv_Hessian}
		\left\|   \left[ \nabla^2 \lambda_J ( \omega^{(k)} )	\right]^{-1} 	-   
				\left[ \nabla^2 \lambda^{(k)}_J ( \omega^{(k)} )  \right]^{-1} 		\right\|_2
				\;	=	\;
			O( h^{(k)} )
	\end{equation}
for all $k$ large enough.
\end{enumerate}
\end{lemma}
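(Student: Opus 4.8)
The plan is to extract second-order information from the function-value and gradient interpolation already established, and to reduce assertion (i) to an entrywise estimate on the symmetric matrix $E_k := \nabla^2 \lambda_J(\omega^{(k)}) - \nabla^2 \lambda_J^{(k)}(\omega^{(k)})$. First I would record the interpolation data available at the $k$th step. Since $\lambda_J(\omega_\ast)$ is simple and $\omega^{(k)} \to \omega_\ast$ with $h^{(k)} \to 0$, for all $k$ large enough both $\omega^{(k)}$ and each perturbed point $\omega^{(k)} + h^{(k)} e_{pq}$ lie in a neighborhood of $\omega_\ast$ on which $\lambda_J$ stays simple; by the relevant parts of Lemma~\ref{thm:first_der} and Lemma~\ref{thm:first_der_extended} the reduced eigenvalue $\lambda_J^{(k)}$ is then simple (hence smooth) there too, and both the values and the gradients of $\lambda_J$ and $\lambda_J^{(k)}$ coincide at $\omega^{(k)}$ and at each $\omega^{(k)} + h^{(k)} e_{pq}$.

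Next, for fixed $p,q$ I would set $g(t) := \lambda_J(\omega^{(k)} + t\, e_{pq})$ and $g^{(k)}(t) := \lambda_J^{(k)}(\omega^{(k)} + t\, e_{pq})$. The interpolation gives $g(0) = g^{(k)}(0)$, the first-derivative match $g'(0) = g^{(k)\prime}(0)$ (from the gradient match at $\omega^{(k)}$), and $g(h^{(k)}) = g^{(k)}(h^{(k)})$. Taylor-expanding $g$ and $g^{(k)}$ about $t=0$, evaluating at $t=h^{(k)}$ and subtracting cancels the zeroth- and first-order terms and leaves $\tfrac{1}{2}(h^{(k)})^2\,\bigl( g''(0) - g^{(k)\prime\prime}(0) \bigr) = O\bigl((h^{(k)})^3\bigr)$; the cubic remainder of $g$ is controlled because $\lambda_J$ is real-analytic, hence has locally bounded third derivatives near the simple eigenvalue $\lambda_J(\omega_\ast)$, while that of $g^{(k)}$ is controlled \emph{uniformly in $k$} precisely by the standing third-derivative hypothesis. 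Dividing by $(h^{(k)})^2$ yields $e_{pq}^\top E_k\, e_{pq} = O(h^{(k)})$ with a $k$-independent constant. Taking $q = p$ (so $e_{pp}=e_p$) bounds the diagonal entries $(E_k)_{pp}$, and the off-diagonal entries are recovered from $e_{pq}^\top E_k e_{pq} = \tfrac{1}{2}\bigl( (E_k)_{pp} + (E_k)_{qq} + 2 (E_k)_{pq} \bigr)$ — this is exactly where the mixed directions $e_{pq}$ with $p \neq q$ enter. Hence $|(E_k)_{pq}| \le C h^{(k)}$ for all $p,q$, which is \eqref{eq:accuracy_sec_der}, and the operator-norm bound follows from $\|E_k\|_2 \le \|E_k\|_F \le d\,C\,h^{(k)}$, proving (i).

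For (ii) I would argue by standard perturbation of the matrix inverse. Writing $H_k := \nabla^2 \lambda_J(\omega^{(k)})$ and $\widetilde H_k := \nabla^2 \lambda_J^{(k)}(\omega^{(k)})$, smoothness of $\lambda_J$ near $\omega_\ast$ and $\omega^{(k)} \to \omega_\ast$ give $H_k \to \nabla^2 \lambda_J(\omega_\ast)$, which is invertible by assumption; so $H_k$ is invertible with $\|H_k^{-1}\|_2 \le M$ for all large $k$. From (i), $\|H_k - \widetilde H_k\|_2 \le d C h^{(k)} \to 0$, whence a Neumann-series bound gives invertibility of $\widetilde H_k$ with $\|\widetilde H_k^{-1}\|_2 \le 2M$ for large $k$. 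The identity $\widetilde H_k^{-1} - H_k^{-1} = \widetilde H_k^{-1} (H_k - \widetilde H_k) H_k^{-1}$ then yields $\|\widetilde H_k^{-1} - H_k^{-1}\|_2 \le 2M^2 d C\, h^{(k)} = O(h^{(k)})$, which is \eqref{eq:inv_Hessian}.

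The main obstacle is making the cubic Taylor remainders uniform in $k$: for the fixed function $\lambda_J$ this is automatic, but for the family $\{\lambda_J^{(k)}\}$ it is precisely the content of the standing hypothesis (later discharged in Proposition~\ref{prop:uniformb_3rdd}). The only other point requiring care is ensuring, via continuity of the eigenvalues and simplicity at $\omega_\ast$, that all interpolation conditions and the requisite differentiability of $\lambda_J^{(k)}$ hold simultaneously at $\omega^{(k)}$ and at the $d(d+1)/2$ perturbed points once $k$ is large.
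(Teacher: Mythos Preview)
Your proof of part (i) is correct and essentially identical to the paper's: both parametrize along the directions $e_{pq}$, use the three interpolation conditions (function value and gradient at $\omega^{(k)}$, function value at $\omega^{(k)}+h^{(k)}e_{pq}$) to kill the zeroth- and first-order Taylor terms, bound the cubic remainder uniformly via the standing third-derivative hypothesis, and then recover the off-diagonal entries of $E_k$ by polarization from the mixed directions. The paper additionally spells out, via a gap-plus-Lipschitz argument, that $\lambda_J^{(k)}$ stays simple on a whole ball around $\omega_\ast$ (not just at the interpolation points); you do not need this within Lemma~\ref{thm:sec_der_ext} since the uniform third-derivative hypothesis already presupposes smoothness, but be aware that the paper places that argument here so it can be quoted later in Proposition~\ref{prop:uniformb_3rdd}.

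For part (ii) you take a genuinely different and cleaner route. The paper uses the adjugate formula $[B^{-1}]_{ij}=(-1)^{i+j}\det(B_{ji})/\det(B)$ and argues entrywise that perturbing each matrix entry by $O(h^{(k)})$ perturbs the minors and determinant by $O(h^{(k)})$. Your argument via the resolvent identity $\widetilde H_k^{-1}-H_k^{-1}=\widetilde H_k^{-1}(H_k-\widetilde H_k)H_k^{-1}$, combined with a Neumann-series bound to control $\|\widetilde H_k^{-1}\|_2$, is more direct, dimension-independent, and gives the same $O(h^{(k)})$ conclusion with an explicit constant $2M^2 dC$. Both are correct; yours is the more standard perturbation-theoretic route.
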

\begin{proof}
\textbf{(i)} First we specify a ball centered at $\omega_\ast$ in which $\lambda_J(\omega)$ and
$\lambda_J^{(k)}(\omega)$ for all large $k$ are simple. The argument initially assumes $J > 1$.
Letting $\varepsilon := \min \{ \lambda_{J-1}(\omega_\ast) - \lambda_J(\omega_\ast), \lambda_J(\omega_\ast) - \lambda_{J+1}(\omega_\ast) \}$,
consider the ball ${\mathcal B}(\omega_\ast, \varepsilon/(8\gamma))$, where $\gamma$ is the Lipschitz constant as
in Lemma \ref{lemma:Lipschitz_continuity}. 
Without loss of generality, let us assume ${\mathcal B}(\omega_\ast, \varepsilon/(8\gamma)) \subseteq \Omega$
(i.e., otherwise choose $\varepsilon$ even smaller so that ${\mathcal B}(\omega_\ast, \varepsilon/(8\gamma)) \subseteq \Omega$).
Now, due to $\lambda_{J-1}(\omega_\ast) - \lambda_J(\omega_\ast) \geq \varepsilon$ as well as
$\lambda_J(\omega_\ast) - \lambda_{J+1}(\omega_\ast) \geq \varepsilon$, and
by Lemma \ref{lemma:Lipschitz_continuity}, we have 
\begin{equation}\label{eq:gap}
	\lambda_{J-1}(\omega) - \lambda_{J}(\omega)	\geq	3 \varepsilon / 4
		\;\;	{\rm and}		\;\;
	\lambda_J(\omega) - \lambda_{J+1}(\omega) 	\geq	3 \varepsilon / 4
	\quad	\forall \omega \in {\mathcal B}(\omega_\ast, \varepsilon/(8\gamma)). 
\end{equation}

Next choose $k$ large enough so that ${\mathcal B}(\omega^{(k)}, h^{(k)}) \subset {\mathcal B}(\omega_\ast, \varepsilon/(8\gamma))$. 
We will show that $\lambda^{(k)}_J(\omega)$ is also simple in ${\mathcal B}(\omega_\ast, \varepsilon/(8\gamma))$. In this respect we note that 
$\lambda_j(\omega^{(k)}) = \lambda_j^{(k)}(\omega^{(k)})$ for $j = J-1, J$
due to parts (i) and (ii) of Lemma \ref{thm:first_der},
so from (\ref{eq:gap}) we have
\[
	\begin{split}
	\lambda_{J-1}^{(k)}(\omega^{(k)}) - \lambda_{J}^{(k)}(\omega^{(k)})	\;\; \geq \;\;	3 \varepsilon / 4
		\quad	{\rm and}	\hskip 24ex	\\
	\lambda_J^{(k)}(\omega^{(k)}) - \lambda_{J+1}^{(k)}(\omega^{(k)}) 	\;\; \geq \;\;		\lambda_J^{(k)}(\omega^{(k)}) - \lambda_{J+1}(\omega^{(k)})	
				\;\; \geq \;\;	3 \varepsilon / 4,
	\end{split}
\]
where in the last line we also used $\lambda_{J+1}^{(k)}(\omega^{(k)}) \leq \lambda_{J+1}(\omega^{(k)})$
which holds due to monotonicity (Lemma \ref{lemma:monotonicity}).
Another application of Lemma \ref{lemma:Lipschitz_continuity} yields
\[
	\lambda_{J-1}^{(k)}(\omega) - \lambda_{J}^{(k)}(\omega)	\geq	 \varepsilon / 4
		\quad	{\rm and}		\quad
	\lambda_J^{(k)}(\omega) - \lambda_{J+1}^{(k)}(\omega) 	\geq	 \varepsilon / 4
	\quad	\forall \omega \in {\mathcal B}(\omega_\ast, \varepsilon/(8\gamma)).
\]
We remark that the argument above applies to the case $J = 1$ trivially by considering
only the gaps  $\lambda_J(\omega) - \lambda_{J+1}(\omega)$ as well as
$\lambda_J^{(k)}(\omega) - \lambda_{J+1}^{(k)}(\omega)$ and showing they remain
bounded away from zero for all $\omega$ inside ${\mathcal B}(\omega_\ast, \varepsilon/(8\gamma))$. 

We prove the desired bounds (\ref{eq:accuracy_sec_der}) first for the sequence $\{ \omega^{(k)} \}$
generated by Algorithm \ref{alge}.
The simplicity of the eigenvalue functions $\lambda_J(\omega)$ and $\lambda_J^{(k)}(\omega)$ 
on ${\mathcal B}(\omega_\ast, \varepsilon/(8\gamma))$ imply that,  for each $p, q$, the functions 
\begin{equation}\label{eq:linesearch_fun}
	\ell_{pq}(t) := \lambda_J(\omega^{(k)} + t h^{(k)} e_{pq})
	\quad	{\rm and}	\quad 
	\ell_{pq,k}(t) := \lambda^{(k)}_J(\omega^{(k)} + t h^{(k)} e_{pq})
\end{equation}
are analytic on $(0,1)$. By applying Taylor's theorem to $\ell_{pq}(t), \ell_{pq,k}(t)$
on the interval $(0,1)$, we obtain
\begin{align*}
	\ell_{pq}(1)
				\; & = \;
	\ell_{pq}(0)
			+
	\ell'_{pq}(0)
			+
	\ell^{''}_{pq}(0) / 2
			+
	\ell^{'''}_{pq}(\eta) / 6	\\
	\ell_{pq,k}(1)
				\; & = \;
	\ell_{pq,k}(0)
			+
	\ell'_{pq,k}(0)
			+
	\ell''_{pq,k}(0) / 2
			+
	\ell'''_{pq,k}(\eta^{(k)}) / 6
\end{align*}
for some $\eta, \eta^{(k)} \in (0,1)$. 
Now by employing $\ell_{pq}(0) = \ell_{pq,k}(0)$, $\ell'_{pq}(0) = \ell'_{pq,k}(0)$ due to parts (i), (iv) of Lemma \ref{thm:first_der},
and $\ell_{pq}(1) = \ell^{(k)}_{pq}(1)$ due to part (i) of Lemma \ref{thm:first_der_extended}, we deduce
\[
	\frac{ \ell''_{pq}(0)  -  \ell''_{pq,k}(0) }{2}
				\; = \;
	\frac{ \ell'''_{pq,k} (\eta^{(k)})  -  \ell^{'''}_{pq}(\eta) }{6}.
\]
In the last expression, 
\[
	\ell''_{pq}(0) = \left[ h^{(k)} \right]^2 e_{pq}^T \nabla^2 \lambda_J(\omega^{(k)}) e_{pq}
	\quad
	{\rm and}
	\quad
	\ell''_{pq,k}(0) = \left[ h^{(k)} \right]^2 e_{pq}^T \nabla^2 \lambda_J^{(k)}(\omega^{(k)}) e_{pq}
\]
as well as $\; \ell'''_{pq,k} (\eta^{(k)}) = O \left( \left[ h^{(k)} \right]^3 \right)$ and $\ell^{'''}_{pq}(\eta) = O\left( \left[ h^{(k)} \right]^3 \right)$,
so we have
\begin{equation}\label{eq:Hessian_identity}
	\frac
	{
	\left[ h^{(k)} \right]^2
	\left|
	e_{pq}^T
	\left[
	\nabla^2 \lambda_J \left( \omega^{(k)} \right)
				-
	\nabla^2 \lambda^{(k)}_J \left( \omega^{(k)} \right)
	\right]
	e_{pq}
	\right|
	}{2}
			\leq
	\frac{D}{6} \left[ h^{(k)} \right]^3
\end{equation}
for some constant $D$ independent of $k$.

Choosing $q = p$ in (\ref{eq:Hessian_identity}) yields
\[
	\left|
	\frac{\partial^2 \lambda_J \left( \omega^{(k)} \right)}{\partial \omega_p^2}  -  \frac{\partial^2 \lambda^{(k)}_J \left( \omega^{(k)} \right)}{\partial \omega_p^2}
	\right|
			\leq
	\frac{D}{3} h^{(k)}.
\]
Additionally, for $q \neq p$, rewriting (\ref{eq:Hessian_identity}) as
\[
	\left|
	\frac{1}{2}
	\left(
	\sum_{\ell = p, q}
		\frac{\partial^2 \lambda_J \left( \omega^{(k)} \right)}{\partial \omega_\ell^2}  -  \frac{\partial^2 \lambda^{(k)}_J \left( \omega^{(k)} \right)}{\partial \omega_\ell^2}
	\right)	
			+
	\left(
	\frac{\partial^2 \lambda_J \left( \omega^{(k)} \right)}{\partial \omega_p \partial \omega_q}  -  \frac{\partial^2 \lambda^{(k)}_J \left( \omega^{(k)} \right)}{\partial \omega_p \partial \omega_q}
	\right)
	\right|
			\leq
	\frac{D}{3} h^{(k)},
\]
we obtain
\[
	\left|
\frac{\partial^2 \lambda_J \left( \omega^{(k)} \right)}{\partial \omega_p \partial \omega_q}  -  \frac{\partial^2 \lambda^{(k)}_J \left( \omega^{(k)} \right)}{\partial \omega_p \partial\omega_q}
	\right|	
			\leq
	\frac{2D}{3} h^{(k)}
\]
leading us to (\ref{eq:accuracy_sec_der}). 
The proof above establishing the bounds (\ref{eq:accuracy_sec_der}) also applies
to the sequence $\{ \omega^{(k)} \}$ generated by Algorithm \ref{alg} when $d = 1$ by defining
$\widetilde{h}^{(k)} := \omega^{(k-1)} - \omega^{(k)}$ (so that $h^{(k)} = | \widetilde{h}^{(k)} |$)
and letting
\[
	\ell_{pq}(t) := \lambda_J(\omega^{(k)} + t \widetilde{h}^{(k)} e_{pq})
	\quad	{\rm and}	\quad 
	\ell_{pq,k}(t) := \lambda^{(k)}_J(\omega^{(k)} + t \widetilde{h}^{(k)} e_{pq})
\]
instead of (\ref{eq:linesearch_fun}).

\textbf{(ii)} 
From part (i), as well as by the
existence of $\nabla^2 \lambda_J(\omega^{(k)})$, $\nabla^2 \lambda_J^{(k)}(\omega^{(k)})$
for all $k$ large enough so that ${\mathcal B}(\omega^{(k)}, h^{(k)}) \subset {\mathcal B}(\omega_\ast, \varepsilon/(8\gamma))$
and by the continuity of $\nabla^2 \lambda_J(\omega)$ at $\omega = \omega_\ast$, we have
\[
	\lim_{k\rightarrow \infty} \nabla^2 \lambda_J(\omega^{(k)}) \;\; = \;\; \lim_{k\rightarrow \infty} \nabla^2 \lambda_J^{(k)}(\omega^{(k)}) \;\; = \;\; \nabla^2 \lambda_J(\omega_\ast).
\]
Exploiting this and the invertibility of $\nabla^2 \lambda_J(\omega_\ast)$, we deduce that 
$\nabla^2 \lambda_J(\omega^{(k)})$  and $\nabla^2 \lambda_J^{(k)}(\omega^{(k)})$ are invertible
for all $k$ large enough. 

For such a $k$, letting $A =  \nabla^2 \lambda_J ( \omega^{(k)} ), \widetilde{A} = \nabla^2 \lambda^{(k)}_J ( \omega^{(k)} )$
as well as $X = [ \nabla^2 \lambda_J ( \omega^{(k)} )	  ]^{-1}$, $\widetilde{X} = [ \nabla^2 \lambda^{(k)}_J ( \omega^{(k)} ) ]^{-1}$,
the classical adjugate formula (i.e., $[B^{-1}]_{ij} = \frac{(-1)^{i+j} {\rm det} (B_{ji})}{ {\rm det} (B)}$
for an invertible $B$, where $B_{ji}$ denotes the matrix obtained from $B$ by removing its $j$th row and the $i$th column)
yields
\[
	\widetilde{x}_{ij}
		=
	\frac{(-1)^{(i+j)}\det(\widetilde{A}_{ji})}{\det(\widetilde{A})}
	\quad {\rm for} \; i,j = 1,\dots,d
\]
By part (i), in particular equation (\ref{eq:accuracy_sec_der}),
we have $\det(\widetilde{A}_{ji}) = \det(A_{ji}) + O(h^{(k)})$ and $\det(\widetilde{A}) = \det(A) + O(h^{(k)})$, so we deduce
\[
	\widetilde{x}_{ij}
		=
	\frac{(-1)^{i+j} \det(A_{ji}) + O(h^{(k)})}{\det(A) + O(h^{(k)})}
		=
	\frac{(-1)^{i+j} \det(A_{ji})}{\det(A)}	+	O(h^{(k)})
		=
		x_{ij} + O(h^{(k)})
\]
leading to (\ref{eq:inv_Hessian}).
\end{proof}

\smallskip

\begin{proposition}\label{prop:uniformb_3rdd}
Suppose that the sequence $\{ \omega^{(k)} \}$ by Algorithm~\ref{alge} is convergent, its limit
$\omega_\ast := \lim_{k\rightarrow \infty} \omega^{(k)}$ lies strictly in the interior of $\Omega$ and
that $\lambda_J(\omega_\ast)$ is simple. Then for $k$ sufficiently large all third derivatives of functions 
$\omega\mapsto \lambda_J^{(k)}(\omega)$ exist in an open interval containing $\omega_*$,  and they 
can be bounded uniformly with respect to $k$.
\end{proposition}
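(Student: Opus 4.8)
The plan is to route every estimate through quantities that do not depend on the growing dimension $m_k = \dim \mathcal{S}_k$ of the reduced spaces. First I would fix the ball $\mathcal{B} := \mathcal{B}(\omega_\ast, \varepsilon/(8\gamma))$ and the integer $k_0$ produced in the first part of the proof of Lemma \ref{thm:sec_der_ext}. The argument there, showing that $\lambda_J^{(k)}(\omega)$ is simple on $\mathcal{B}$ for every $k \geq k_0$ with a uniform spectral gap
\[
	\left| \lambda_J^{(k)}(\omega) - \mu \right| \;\geq\; \varepsilon/4 \qquad \text{for every other eigenvalue } \mu \text{ of } {\mathbf A}^{(k)}(\omega),
\]
relies only on Lipschitz continuity (Lemma \ref{lemma:Lipschitz_continuity}), monotonicity (Lemma \ref{lemma:monotonicity}) and the interpolation identities (Lemma \ref{thm:first_der}); crucially it does \emph{not} use any bound on third derivatives, so it is available here without circularity. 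Since ${\mathbf A}^{(k)}(\omega) = {\mathbf S}_k^\ast {\mathbf A}(\omega) {\mathbf S}_k$ is a real-analytic Hermitian family (the $f_\ell$ are real-analytic and the ${\mathbf A}_\ell^{(k)} := {\mathbf S}_k^\ast {\mathbf A}_\ell {\mathbf S}_k$ are constant) and $\lambda_J^{(k)}(\omega)$ is an isolated simple eigenvalue throughout $\mathcal{B}$, analytic perturbation theory \cite[pages 33--34]{Rellich1969} guarantees that $\lambda_J^{(k)}(\omega)$, the associated unit eigenvector $u^{(k)}(\omega)$, and the eigenprojection $P^{(k)}(\omega)$ are real-analytic on $\mathcal{B}$. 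In particular the third derivatives exist, which settles the existence claim.

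For the uniform bound I would isolate three dimension-free estimates on the building blocks. Writing ${\mathbf A}^{(k)}(\omega) = \sum_{\ell=1}^{\kappa} f_\ell(\omega) {\mathbf A}_\ell^{(k)}$, every mixed partial derivative of order $|\beta| \leq 3$ satisfies
\[
	\left\| \partial^\beta {\mathbf A}^{(k)}(\omega) \right\|_2 \;\leq\; \sum_{\ell=1}^\kappa \left| \partial^\beta f_\ell(\omega) \right| \, \| {\mathbf A}_\ell \|_2 \;=:\; M_\beta,
\]
using $\| {\mathbf A}_\ell^{(k)} \|_2 \leq \| {\mathbf A}_\ell \|_2$ together with the fact that the real-analytic $f_\ell$ have derivatives bounded on the compact set $\overline{\mathcal{B}}$, so that each $M_\beta$ is finite and independent of $k$. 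Next, the reduced resolvent (group inverse) $S^{(k)}(\omega) := \big( {\mathbf A}^{(k)}(\omega) - \lambda_J^{(k)}(\omega) I \big)^{\#}$ is Hermitian, and its norm equals the reciprocal of the distance from $\lambda_J^{(k)}(\omega)$ to the rest of the spectrum, so the uniform gap yields $\| S^{(k)}(\omega) \|_2 \leq 4/\varepsilon$ for all $k \geq k_0$ and $\omega \in \mathcal{B}$. Finally $\| u^{(k)}(\omega) \|_2 = \| P^{(k)}(\omega) \|_2 = 1$.

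The remaining step expresses the derivatives of $\lambda_J^{(k)}$ purely through these bounded objects. Starting from $\partial_p \lambda_J^{(k)} = \langle \partial_p {\mathbf A}^{(k)} \, u^{(k)}, u^{(k)} \rangle$ and the first-order formula $\partial_p u^{(k)} = -S^{(k)} (\partial_p {\mathbf A}^{(k)}) u^{(k)}$ (obtained by differentiating $({\mathbf A}^{(k)} - \lambda_J^{(k)} I) u^{(k)} = 0$, applying $S^{(k)}$, and normalizing so that $\langle \partial_p u^{(k)}, u^{(k)} \rangle = 0$), I would differentiate twice more. Each differentiation acts on a factor of type $\partial^\beta {\mathbf A}^{(k)}$ (raising $|\beta|$ by one), on $u^{(k)}$ (producing $-S^{(k)}(\partial_r {\mathbf A}^{(k)}) u^{(k)}$), or on $S^{(k)}$ (producing $\partial_r S^{(k)}$, which upon differentiating $S^{(k)}({\mathbf A}^{(k)} - \lambda_J^{(k)} I) = I - P^{(k)}$ is again a finite sum of products of $S^{(k)}$, $P^{(k)}$ and $\partial_r {\mathbf A}^{(k)}$). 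Consequently $\partial_{pqr} \lambda_J^{(k)}(\omega)$ is a finite sum of terms $\langle B^{(k)} u^{(k)}, u^{(k)} \rangle$, where each $B^{(k)}$ is a product of operators drawn from $\{ S^{(k)}, P^{(k)}, \partial^\beta {\mathbf A}^{(k)} : |\beta| \leq 3 \}$, and where the number of terms and the length of each product depend only on the differentiation order and on $d$, not on $k$. Bounding each factor by $4/\varepsilon$, $1$, or $M_\beta$ and using $\| u^{(k)} \|_2 = 1$ then gives a bound on $|\partial_{pqr} \lambda_J^{(k)}(\omega)|$ uniform over $k \geq k_0$ and $\omega \in \mathcal{B}$, as required.

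The main obstacle is conceptual rather than computational: because the ${\mathbf A}^{(k)}(\omega)$ act on spaces $\mathbb{C}^{m_k}$ of growing dimension, any estimate that passes through a full trace of a resolvent product, or through the number of eigenvalues, would scale with $m_k$ and be useless. The essence of the argument is that every ingredient is blind to $m_k$ — the operator norms through $\| {\mathbf A}_\ell^{(k)} \|_2 \leq \| {\mathbf A}_\ell \|_2$, and the reduced resolvent norm through the uniform gap $\varepsilon/4$ rather than the individual eigenvalue spacings. Verifying that $S^{(k)}$ and its $\omega$-derivatives admit such dimension-free bounds, and that the combinatorial shape of the third-derivative expansion is genuinely $k$-independent, is where the care lies.
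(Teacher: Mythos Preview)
Your argument is correct and takes a genuinely different route from the paper. The paper extends the real-analytic family ${\mathbf A}^{{\mathcal S}_k}(\omega)$ to a complex-analytic family $\hat{\mathbf A}^{{\mathcal S}_k}$ in a complex disk around $\omega_\ast$, shows via a Stewart-type perturbation bound that $\hat\lambda_J^{(k)}$ stays simple (hence analytic) on a disk of radius $r_2$ independent of $k$, and then bounds the third derivative directly by the Cauchy integral formula
\[
\frac{d^3\hat\lambda_J^{(k)}}{d\omega^3}(\tilde\omega)=\frac{3!}{2\pi{\rm i}}\oint_{|\omega-\tilde\omega|=r_2/2}\frac{\hat\lambda_J^{(k)}(\omega)}{(\omega-\tilde\omega)^4}\,d\omega,
\]
estimating $|\hat\lambda_J^{(k)}|$ on the contour by $\sum_\ell \|{\mathbf A}_\ell\|_2\,|\hat f_\ell|$. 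You instead stay entirely in the real domain and use Rayleigh--Schr\"odinger--type expansions: differentiate the eigenvalue equation repeatedly and express $\partial_{pqr}\lambda_J^{(k)}$ as a finite sum of inner products built from $\partial^\beta{\mathbf A}^{(k)}$, the eigenprojection $P^{(k)}$, and the reduced resolvent $S^{(k)}$, each of which you bound dimension-independently (the crucial point being $\|S^{(k)}\|_2\leq 4/\varepsilon$ from the uniform gap). Both approaches hinge on the same two $k$-free ingredients---$\|{\mathbf A}_\ell^{(k)}\|_2\leq\|{\mathbf A}_\ell\|_2$ and the uniform spectral gap---but deploy them differently: the paper's Cauchy-estimate route is slicker once the complex extension is in hand and avoids any bookkeeping of perturbation terms, while your resolvent route is more elementary (no complex analysis, no Stewart theorem) and treats all mixed partials for $d\geq 1$ in one stroke rather than iterating as the paper does. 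One small point worth tightening in your write-up: your $M_\beta$ as displayed still depends on $\omega$; make explicit that you take $\sup_{\omega\in\overline{\mathcal B}}$, as you indicate in words immediately after.
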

\begin{proof}
For the sake of clarity we first consider the case when $d=1$.
Following from the assumptions of the proposition and from the elements spelled out in the proof of Lemma~\ref{thm:sec_der_ext}, there exists an open interval $\mathcal{I}:=(\omega_*-r_1,\ \omega_*+r_1)$, and numbers $k_1\in\mathbb{N}$ and $\ell \in\mathbb{R}, \ell>0$, such that for every $k\geq k_1$ and all $\omega\in\mathcal{I}$, the eigenvalue $\lambda_J^{(k)}(\omega)$ is simple, as well as
\begin{equation}\label{fr1}
\min\left\{ \left|\lambda_J^{(k)}(\omega)-\lambda_{J-1}^{(k)}(\omega)\right|,\
\left|\lambda_J^{(k)}(\omega)-\lambda_{J+1}^{(k)}(\omega)\right|\right\}
  \geq \ell.
\end{equation}

Since functions $f_{\ell}$ are assumed real analytic, there exist  analytic extensions on the complex plane at $\omega_*$, which we refer to by functions $\hat f_{\ell},\ \ell=1,\ldots \kappa$. We denote by  $\mathbf{\hat A}^{\mathcal{S}_k}(\omega)$ the corresponding extension of  $\mathbf{A}^{\mathcal{S}_k}(\omega)$. Following the ideas spelled out in the proof of Lemma~3 of \cite{Kressner2017}, we can express
\begin{equation}\label{fr2}
\begin{array}{ll}
\left\|\mathbf{\hat A}^{\mathcal{S}_k}(\omega)-\mathbf{\hat A}^{\mathcal{S}_k}(\omega_*)\right\|_2
&\leq \sum_{\ell=1}^{\kappa} \left\|\mathbf{S}_k^*\mathbf{A}_{\ell} \mathbf{S}_k\right\|_2 \left| \hat f_\ell({\omega})- \hat{f}_\ell(\omega_*)\right|
\\
&\leq \sum_{\ell=1}^{\kappa} \left\|\mathbf{A}_{\ell} \right\|_2 \left| \hat f_\ell({\omega})- \hat{f}_\ell(\omega_*)\right|.
\end{array}
\end{equation}
Note that if $\omega$ is non-real, the difference $\mathbf{\hat A}^{\mathcal{S}_k}(\omega)-\mathbf{\hat A}^{\mathcal{S}_k}(\omega_*)$ might be non-Hermitian. 
However, an important conclusion from the above inequalities is that 
$\| \mathbf{\hat A}^{\mathcal{S}_k}(\omega)-\mathbf{\hat A}^{\mathcal{S}_k}(\omega_*) \|_2$ can be bounded from above uniformly independent of $k$,
that is independent of the dimension of the subspace ${\mathcal S}_k$, for all $\omega$ inside a disk centered at $\omega_\ast$.
Furthermore, this upper bound can be chosen arbitrarily close to 0 by reducing the radius of the disk.

From (\ref{fr1}), (\ref{fr2}) and Theorem~5.1 of \cite{Stewart1990}, we conclude that there exists a number $r_2\in (0,\ r_1)$ independent of $k\geq k_1$ (i.e., the dimension of the subspace), such that the eigenvalue function $\hat \lambda_J^{(k)}$, obtained by replacing $\mathbf{ A}^{\mathcal{S}_k}$ with its analytic exension $\mathbf{\hat A}^{\mathcal{S}_k}$, remains simple on and inside the disk $\left\{\omega\in\mathbb{C}:\ |\omega-\omega_*|\leq r_2 \right\}$
for every $k\geq k_1$.
Hence $\hat \lambda_J^{(k)}$ is well-defined and analytic inside this disk 
for every $k\geq k_1$.
 For $\tilde\omega\in\mathbb{R}$ satisfying $|\tilde\omega-\omega_*|< r_2/2$, we have
\[
\frac{d^3\hat \lambda_J^{(k)}}{d\omega^3}(\tilde\omega)=\frac{3!}{2\pi {\rm i}} \oint_{|\omega-\tilde\omega|=r_2/2}
\frac{\hat\lambda_J^{(k)}(\omega)}{(\omega-\tilde\omega)^4}d\omega, 
\]
with ${\rm i}$ the imaginary unit. We can bound
\begin{equation}\label{fr3}
\begin{array}{lll}
\left|\frac{ d^3\hat\lambda_J^{(k)}  } {d\omega^3}(\tilde\omega)\right|
&\leq &\frac{96}{r_2^3} \ \max_{\omega\in\mathbb{C},|\omega-\tilde\omega|=r/2} \left|\hat\lambda_J^{(k)}(\omega)\right|
\\
&\leq & \frac{96}{r_2^3}\	
\sum_{\ell = 1}^\kappa \left\|{\mathbf A}^{{\mathcal S}_k}_\ell\right\|_2	 \left(\max_{\omega\in\mathbb{C},\ |\omega-\tilde\omega|=r_2/2}	\left|\hat f_\ell(\omega)\right|\right) \\
&\leq & \frac{96}{r_2^3}\	
\sum_{\ell = 1}^\kappa \left\|{\mathbf A}_\ell\right\|_2	 \left(\max_{\omega\in\mathbb{C},\ |\omega-\tilde\omega|=r_2/2}	\left|\hat f_\ell(\omega)\right|\right) ,
\end{array}
\end{equation}
hence, an upper bound is established that does not depend on $k\geq k_1$.

\medskip 
For the case $d>1$ only a slight adaptation is needed for the mixed derivatives. We sketch the main principles by means of the case when $d=2$. 
From the ideas behind (\ref{fr3}) a bound on $\hat\lambda_J^{(k)}$ induces a bound, uniform in $k$, on its partial derivative 
$\frac{\partial^2 \hat\lambda_J^{(k)}}{\partial \omega_1^2}$ in an open set containing $\omega_*$. Likewise the bound on the latter induces a bound 
on $\frac{\partial^3 \hat\lambda_J^{(k)}}{\partial \omega_1^2\partial\omega_2}$.
 \end{proof}

\section{Convergence Analysis}\label{sec:convergence}
In this section we analyze the convergence properties of the two subspace procedures introduced.
The first part concerns global convergence: it elaborates on whether the iterates of  
Algorithm \ref{alg} and Algorithm \ref{alge} necessarily converge as the subspace dimensions grow to infinity,
and if they do converge, where they converge to. The second part establishes a superlinear
rate-of-convergence result for the iterates of Algorithm \ref{alge}
(and Algorithm \ref{alg} when $d=1$).


\subsection{Global Convergence}\label{sec:global_convergence}
The subspace procedures, when applied to the minimization problem \textbf{(MN)}, converge globally. 
A formal statement of this global convergence property together with its proof are given in what follows. Note that the 
sequence $\{\omega^{(k)} \}$ generated by Algorithm \ref{alg} or Algorithm \ref{alge} belongs to the bounded set $\Omega$,
so it must have convergent subsequences.
\begin{theorem}\label{thm:global_conv}
The following hold for both Algorithm \ref{alg} and Algorithm \ref{alge}. 
\begin{enumerate}
	\item[\bf (i)] The limit of every convergent subsequence of the sequence $\left\{ \omega^{(k)} \right\}$ 
	for the minimization problem \textbf{(MN)} is a global minimizer of $\lambda_J(\omega)$ over $\Omega$.

	\item[\bf (ii)] 
	$
		\lim_{k\rightarrow \infty} \lambda_J^{(k)}(\omega^{(k+1)}) \;\; = \;\;  \lim_{k\rightarrow \infty} \: \min_{\omega \in \Omega} \lambda_J^{(k)}(\omega)
					\;\; = \;\; \min_{\omega \in \Omega} \lambda_J(\omega).
	$
\end{enumerate}
\end{theorem}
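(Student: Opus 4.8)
The plan is to exploit three facts established above: the uniform Lipschitz continuity of every reduced function $\lambda_J^{\mathcal V}$ with a common constant $\gamma$ (Lemma \ref{lemma:Lipschitz_continuity}), the monotonicity $\lambda_J^{{\mathcal V}_1} \le \lambda_J^{{\mathcal V}_2} \le \lambda_J$ for nested subspaces (Lemma \ref{lemma:monotonicity}), and the interpolation identity $\lambda_J(\omega^{(\ell)}) = \lambda_J^{(k)}(\omega^{(\ell)})$ valid for all $\ell \le k$ (part (i) of Lemma \ref{thm:first_der}, extended to Algorithm \ref{alge} by Lemma \ref{thm:first_der_extended}, so both algorithms are covered at once). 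Writing $m_k := \min_{\omega\in\Omega}\lambda_J^{(k)}(\omega) = \lambda_J^{(k)}(\omega^{(k+1)})$, monotonicity in the subspace shows that $\{m_k\}$ is non-decreasing and bounded above by $m_\ast := \min_{\omega\in\Omega}\lambda_J(\omega)$, hence convergent; the content of the theorem is that this limit equals $m_\ast$ and that accumulation points of $\{\omega^{(k)}\}$ are global minimizers. I would prove (i) first and then read off (ii).

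For (i), let $\bar\omega = \lim_j \omega^{(k_j)}$ be the limit of a convergent subsequence, which lies in $\Omega$ since $\Omega$ is closed. Fix any $\omega \in \Omega$ and any two indices $i < j$, so that $k_i \le k_j - 1$ and hence the eigenvectors attached to $\omega^{(k_i)}$ already belong to ${\mathcal S}_{k_j-1}$. The interpolation identity then gives $\lambda_J^{(k_j-1)}(\omega^{(k_i)}) = \lambda_J(\omega^{(k_i)})$, while the definition of $\omega^{(k_j)}$ as a global minimizer of $\lambda_J^{(k_j-1)}$ combined with monotonicity gives $\lambda_J^{(k_j-1)}(\omega^{(k_j)}) \le \lambda_J^{(k_j-1)}(\omega) \le \lambda_J(\omega)$. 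Applying the uniform Lipschitz bound between $\omega^{(k_i)}$ and $\omega^{(k_j)}$ and chaining these relations yields
\[
\lambda_J(\omega^{(k_i)}) - \gamma\,\|\omega^{(k_j)} - \omega^{(k_i)}\|_2 \;\le\; \lambda_J^{(k_j-1)}(\omega^{(k_j)}) \;\le\; \lambda_J(\omega).
\]
Letting $j\to\infty$ and then $i\to\infty$, the Lipschitz term vanishes because both $\omega^{(k_i)},\omega^{(k_j)}\to\bar\omega$, and continuity of $\lambda_J$ gives $\lambda_J(\bar\omega) \le \lambda_J(\omega)$. As $\omega$ was arbitrary, $\bar\omega$ is a global minimizer, which proves (i).

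For (ii), the two left-hand quantities coincide for every $k$ by the definition of $\omega^{(k+1)}$, so only $m_k \to m_\ast$ remains, and since $\{m_k\}$ already converges it suffices to identify the limit along one subsequence. Reusing the lower part of the chain with the minimizer's own value, $m_{k_j-1} = \lambda_J^{(k_j-1)}(\omega^{(k_j)}) \ge \lambda_J(\omega^{(k_i)}) - \gamma\,\|\omega^{(k_j)}-\omega^{(k_i)}\|_2$, and passing to the limit gives $\liminf_j m_{k_j-1} \ge \lambda_J(\bar\omega) = m_\ast$ by part (i); combined with $m_k \le m_\ast$ this forces $m_k \to m_\ast$. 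I expect the main obstacle to be the mechanism by which the reduced minima catch up to the true minimum, and the key realization is precisely the two-index argument: the subspace ${\mathcal S}_{k_j-1}$ already reproduces the \emph{exact} value of $\lambda_J$ at every earlier iterate $\omega^{(k_i)}$, so that once the iterates cluster, the uniform Lipschitz constant transfers this exactness from $\omega^{(k_i)}$ to the fresh minimizer $\omega^{(k_j)}$ and squeezes the gap to zero. Everything else (monotone convergence of $\{m_k\}$, closedness of $\Omega$, continuity of $\lambda_J$) is routine.
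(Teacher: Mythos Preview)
Your proof is correct and follows essentially the same route as the paper: both arguments combine the uniform Lipschitz bound (Lemma~\ref{lemma:Lipschitz_continuity}), monotonicity (Lemma~\ref{lemma:monotonicity}), and the interpolation identity (Lemma~\ref{thm:first_der}(i)) in a two-index squeeze along a convergent subsequence, then read off the monotone convergence of $m_k=\lambda_J^{(k)}(\omega^{(k+1)})$ to $m_\ast$. The only cosmetic difference is that the paper sandwiches $m_\ast$ between $\lambda_J^{(\ell_k)}(\omega^{(\ell_k)})$ and $\lambda_J^{(\ell_k)}(\omega^{(\ell_{k+1})})$ using consecutive subsequence indices, whereas you compare the reduced minimum directly against an arbitrary $\omega\in\Omega$ via general indices $i<j$; the mechanism is identical.
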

\begin{proof}
\textbf{(i)} Let $\{ \omega^{(\ell_k)} \}$ be a convergent subsequence of $\{ \omega^{(k)} \}$.
By the monotonicity property, that is by Lemma \ref{lemma:monotonicity}, we have
\begin{equation}\label{eq:ubound}
	\begin{split}
	\min_{\omega \in \Omega} \: \lambda_J(\omega)		\;\; & \geq \;\;		\min_{\omega \in \Omega} \: \lambda^{(\ell_{k+1}-1)}_J (\omega)	\\
										\;\; & = \;\;	\lambda^{(\ell_{k+1}-1)}_J ( \omega^{(\ell_{k+1})} )
										\;\; \geq	\;\;	\lambda^{(\ell_k)}_J ( \omega^{(\ell_{k+1})} ),
	\end{split}
\end{equation}
and, by the interpolatory property, that is part (i) of Lemma \ref{thm:first_der}, 
\begin{equation}\label{eq:lbound}
	\min_{\omega \in \Omega} \lambda_J(\omega)	\;\; \leq \;\;	\lambda_J(\omega^{(\ell_k)}) \;\; = \;\; \lambda^{(\ell_k)}_J (\omega^{(\ell_k)}).	\hskip 12ex
\end{equation}
But observe the Lipschitz continuity of $\lambda_J^{(\ell_k)}(\omega)$ (see Lemma \ref{lemma:Lipschitz_continuity}) implies
\[
	\lim_{k \rightarrow \infty}	\;
	 \left| \lambda^{(\ell_k)}_J ( \omega^{(\ell_{k+1})} )	-	\lambda^{(\ell_k)}_J ( \omega^{(\ell_k)} ) \right|
	 			\; = \;
	\gamma
	\lim_{k \rightarrow \infty}	\;
	 \| \omega^{(\ell_{k+1})}  -  \omega^{(\ell_k)}  \|_2 \; = \; 0.
\]
Consequently, taking the limits in (\ref{eq:ubound}) and (\ref{eq:lbound}) as $k \rightarrow \infty$ and employing
the interpolatory property (part (i) of Lemma \ref{thm:first_der}), we deduce
\begin{equation}\label{eq:all_converge}
	\lim_{k\rightarrow \infty} \lambda_J (\omega^{(\ell_k)})
			\;\; = \;\;
	\lim_{k\rightarrow \infty} \lambda^{(\ell_k)}_J (\omega^{(\ell_k)})
			\;\; = \;\;
	\lim_{k\rightarrow \infty} \lambda^{(\ell_k)}_J (\omega^{(\ell_{k+1})})
			\;\; = \;\;
	\min_{\omega \in \Omega} \lambda_J(\omega).	
\end{equation}
Finally, by the continuity of $\lambda_J(\omega)$, the sequence $\{ \omega^{(\ell_k)} \}$ must converge to a 
global minimizer of $\lambda_J(\omega)$.

\textbf{(ii)} Let $\lambda_\ast := \min_{\omega \in \Omega} \lambda_J(\omega)$. We first show that the sequence
$\{ \lambda_J^{(k)}(\omega^{(k+1)}) \}$ is convergent. In this respect observe that for every pair of positive
integers $p, k$ such that $p > k$, due to monotonicity (Lemma \ref{lemma:monotonicity}) we have
\[
	\begin{split}
	\lambda_\ast \;\; & \geq \;\; \min_{\omega \in \Omega} \lambda_J^{(p)} (\omega) \;\; = \;\;  \lambda_J^{(p)}(\omega^{(p+1)}) 	\\
				\;\; & \geq \;\; \lambda_J^{(k)}(\omega^{(p+1)}) \;\; \geq \;\; \min_{\omega \in \Omega} \lambda_J^{(k)} (\omega) \;\; = \;\; \lambda_J^{(k)}(\omega^{(k+1)}).
	\end{split}
\]
Hence the sequence $\{ \lambda_J^{(k)}(\omega^{(k+1)}) \}$ is monotonically increasing bounded above by $\lambda_\ast$
proving its convergence. 

Now let $\{ \omega^{(\ell_k)} \}$ be a convergent subsequence of $\{ \omega^{(k)} \}$ as in part (i), and let us consider the 
sequence $\{ \lambda_J^{(\ell_{k+1}-1)}(\omega^{(\ell_{k+1})}) \}$, which is a
subsequence of $\{ \lambda_J^{(k)}(\omega^{(k+1)}) \}$. We complete the proof by establishing the convergence of this 
subsequence to $\lambda_\ast$.
The monotonicity and the boundedness of $\{ \lambda_J^{(k)}(\omega^{(k+1)}) \}$ from above by $\lambda_\ast$
imply
\begin{equation}\label{eq:order_seq}
	 \lambda_J^{(\ell_k)}(\omega^{(\ell_{k+1})})	\;\;	\leq	\;\;		\lambda_J^{(\ell_{k+1}-1)}(\omega^{(\ell_{k+1})})	\;\;	\leq	\;\;		\lambda_\ast.
\end{equation}
Above, as shown in part (i) (in particular see (\ref{eq:all_converge})),
$\lim_{k\rightarrow \infty} \lambda_J^{(\ell_k)}(\omega^{(\ell_{k+1})}) = \lambda_\ast$, so we must also have
 $\lim_{k\rightarrow \infty} \lambda_J^{(\ell_{k+1}-1)}(\omega^{(\ell_{k+1})}) = \lambda_\ast$ as desired.
\end{proof}

As for the maximization problem \textbf{(MX)}, it does not seem possible to conclude with such a
convergence result to a global maximizer. This is because only a lower bound (but not an upper
bound) is available in terms of the reduced eigenvalue functions for the maximum value of $\lambda_J(\omega)$
over $\omega \in \Omega$. However, the sequence $\{ \lambda_J^{(k)}(\omega^{(k+1)}) \}$ is still convergent 
as shown next.
\begin{theorem}\label{thm:global_conv_max}
The sequence $\{ \lambda_J^{(k)}(\omega^{(k+1)}) \}$ generated by Algorithm \ref{alg} and Algorithm \ref{alge}
for the maximization problem \textbf{(MX)} converges. 
\end{theorem}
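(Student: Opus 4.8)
The plan is to show that the sequence $a_k := \lambda_J^{(k)}(\omega^{(k+1)})$ is monotonically increasing and bounded above, from which convergence follows immediately by the monotone convergence theorem. The first thing to notice is that, for the maximization problem \textbf{(MX)}, line 5 of both algorithms selects $\omega^{(k+1)}$ as a global maximizer of $\lambda_J^{(k)}(\omega)$ over $\Omega$, so that
\[
	a_k \;=\; \lambda_J^{(k)}(\omega^{(k+1)}) \;=\; \max_{\omega \in \Omega}\, \lambda_J^{(k)}(\omega).
\]
This reinterpretation of $a_k$ as the optimal value of the $k$th reduced maximization problem is what makes the argument work.

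Next I would exploit the nestedness of the subspaces produced by the procedures, namely ${\mathcal S}_{k} \subseteq {\mathcal S}_{k+1}$ (guaranteed by line 7 of Algorithm \ref{alg} and line 11 of Algorithm \ref{alge}), together with the monotonicity result of Lemma \ref{lemma:monotonicity}. Monotonicity gives, pointwise in $\omega$, the chain
\[
	\lambda_J^{(k)}(\omega) \;\leq\; \lambda_J^{(k+1)}(\omega) \;\leq\; \lambda_J(\omega)
	\quad \forall\, \omega \in \Omega.
\]
Taking the maximum over $\omega \in \Omega$ on the left inequality and using the characterization of $a_k$ above yields $a_k \leq a_{k+1}$, so the sequence is nondecreasing. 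Taking the maximum over $\omega \in \Omega$ on the right inequality yields $a_k \leq \max_{\omega \in \Omega} \lambda_J(\omega)$, and this upper bound is finite because $\lambda_J$ is continuous on the compact set $\Omega$ (alternatively, because $\lambda_J(\omega) \leq \| {\mathbf A}(\omega) \|_2 \leq \sum_{\ell=1}^\kappa | f_\ell(\omega) | \, \| {\mathbf A}_\ell \|_2$ is bounded on $\Omega$). Hence $\{ a_k \}$ is nondecreasing and bounded above, and therefore converges.

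I do not expect a genuine obstacle here; the entire content is the identification of $a_k$ with the reduced optimal value and the two applications of Lemma \ref{lemma:monotonicity}. The only point requiring a little care is the step where a pointwise inequality between the two reduced eigenvalue functions is promoted to an inequality between their suprema over $\Omega$, which is justified simply because the supremum is an order-preserving operation. It is worth contrasting this with the minimization case treated in Theorem \ref{thm:global_conv}: there the available bounds forced a more delicate subsequence argument to identify the \emph{limit}, whereas here we only claim convergence and not that the limit equals $\max_{\omega \in \Omega} \lambda_J(\omega)$, consistent with the earlier remark that global convergence to the true maximum need not hold for \textbf{(MX)}.
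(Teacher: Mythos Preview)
Your proof is correct and follows the same overall scheme as the paper --- show that $\{ \lambda_J^{(k)}(\omega^{(k+1)}) \}$ is nondecreasing and bounded above, then invoke monotone convergence --- but the way you obtain the monotonicity of the sequence is slightly different and in fact cleaner. The paper routes the inequality $\lambda_J^{(k-1)}(\omega^{(k)}) \leq \lambda_J^{(k)}(\omega^{(k+1)})$ through the interpolatory property (Lemma~\ref{thm:first_der}(i)), writing $\lambda_J^{(k-1)}(\omega^{(k)}) \leq \lambda_J(\omega^{(k)}) = \lambda_J^{(k)}(\omega^{(k)}) \leq \lambda_J^{(k)}(\omega^{(k+1)})$; you instead use the nestedness ${\mathcal S}_k \subseteq {\mathcal S}_{k+1}$ together with Lemma~\ref{lemma:monotonicity} to obtain the pointwise inequality $\lambda_J^{(k)}(\omega) \leq \lambda_J^{(k+1)}(\omega)$ and then pass to the maximum. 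Your route needs only Lemma~\ref{lemma:monotonicity} and avoids the interpolatory lemma altogether, which is a small economy; the paper's chain, on the other hand, yields the extra inequality $\lambda_J^{(k-1)}(\omega^{(k)}) \leq \lambda_J(\omega^{(k)}) \leq \lambda_J^{(k)}(\omega^{(k+1)})$, which is informative in its own right even though it is not needed for the bare convergence statement.
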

\begin{proof}
Letting $\lambda^\ast := \max_{\omega \in \Omega} \lambda_J(\omega)$, the sequence by Algorithm \ref{alg} and Algorithm \ref{alge} 
satisfies
\begin{equation}\label{eq:global_conv_max}
	\begin{split}
	\lambda_J^{(k-1)} (\omega^{(k)})	\;\; & \leq \;\;	\lambda_J (\omega^{(k)})		\;\; = \;\;	\lambda_J^{(k)}(\omega^{(k)})	\\
								\;\; & \leq \;\; 	\max_{\omega \in \Omega} \: \lambda_J^{(k)} (\omega)	\;\; = \;\;	\lambda_J^{(k)}(\omega^{(k+1)})	\;\; \leq \;\; \lambda^\ast
	\end{split}
\end{equation}
where the first and the last inequality follow from the monotonicity, and the equality in the first line 
is a consequence of the interpolatory property. These inequalities lead to the conclusion that the
sequence $\{ \lambda_J^{(k)} (\omega^{(k+1)}) \}$ is monotone increasing and bounded above 
by $\lambda^\ast$, hence convergent.
\end{proof}

\noindent
We observe in practice that the sequence $\{ \lambda_J^{(k)} (\omega^{(k+1)}) \}$ converges to a $\widetilde{\lambda}$ 
such that $\lambda_J(\widetilde{\omega}) = \widetilde{\lambda}$ for some $\widetilde{\omega}$ that is a local maximizer
of $\lambda_J(\omega)$, that is not necessarily a global maximizer.

We illustrate these convergence results for the minimization as well as for the maximization of the largest eigenvalue 
$\lambda_1(\omega)$ of
\begin{equation}\label{eq:numrad}
	A(\omega)
		\;\;	=	\;\;
	\frac{Ae^{i\omega} + A^\ast e^{-i\omega}}{2}
		\;\;	=	\;\;
	{\rm cos}(\omega)
	\left(
	\frac{A + A^\ast}{2}
	\right)
		+
	{\rm sin}(\omega)
	\left(
	\frac{i A - i A^\ast}{2}
	\right)
\end{equation}
over $\omega \in [0, 2\pi]$ and for a particular matrix $A$. The maximum of the largest eigenvalue of $A(\omega)$ over $\omega \in [0, 2\pi]$ 
corresponds to the numerical radius of $A$ \cite{Horn1991, He1997}, see
Section \ref{sec:numrad} for more on the numerical radius. Here we particularly choose $A$ as the 
$400\times 400$ matrix whose real part comes from  a five-point finite difference discretization of a Poisson equation, 
and whose complex part has random entries selected independently from a normal distribution with zero mean and standard 
deviation equal to 20. An application of the basic subspace procedure (Algorithm \ref{alg})  for the maximization of $\lambda_1(\omega)$ 
starting with $\omega^{(1)} = 2$ results in convergence to a local maximizer $\widetilde{\omega} \approx 2.353$, whereas initiating
the procedure with $\omega^{(1)} = 5.5$ leads to convergence to $\omega^\ast \approx 6.145$, the unique global maximizer of 
$\lambda_1(\omega)$ over $[0, 2\pi]$. This is depicted on the top row in Figure \ref{fig:convergence} on the left and
on the right, respectively. The situation is quite different when the subspace procedure is applied to minimize $\lambda_1(\omega)$.
It converges to the global minimizer $\omega_\ast \approx 3.959$ of $\lambda_1(\omega)$ regardless whether the procedure is initiated with
$\omega^{(1)} = 2$ or $\omega^{(1)} = 5.5$ as depicted at the bottom row of Figure \ref{fig:convergence}. Notice that the subspace
procedure for the maximization problem constructs reduced eigenvalue functions that capture $\lambda_1(\omega)$ well only locally
around the maximizers. In contrast for the minimization problem the reduced eigenvalue functions capture
$\lambda_1(\omega)$ globally, but their accuracy is higher around the minimizers.

\begin{figure}[h]
		\hskip -2ex
		\begin{tabular}{ll}
		\includegraphics[width=.49\textwidth]{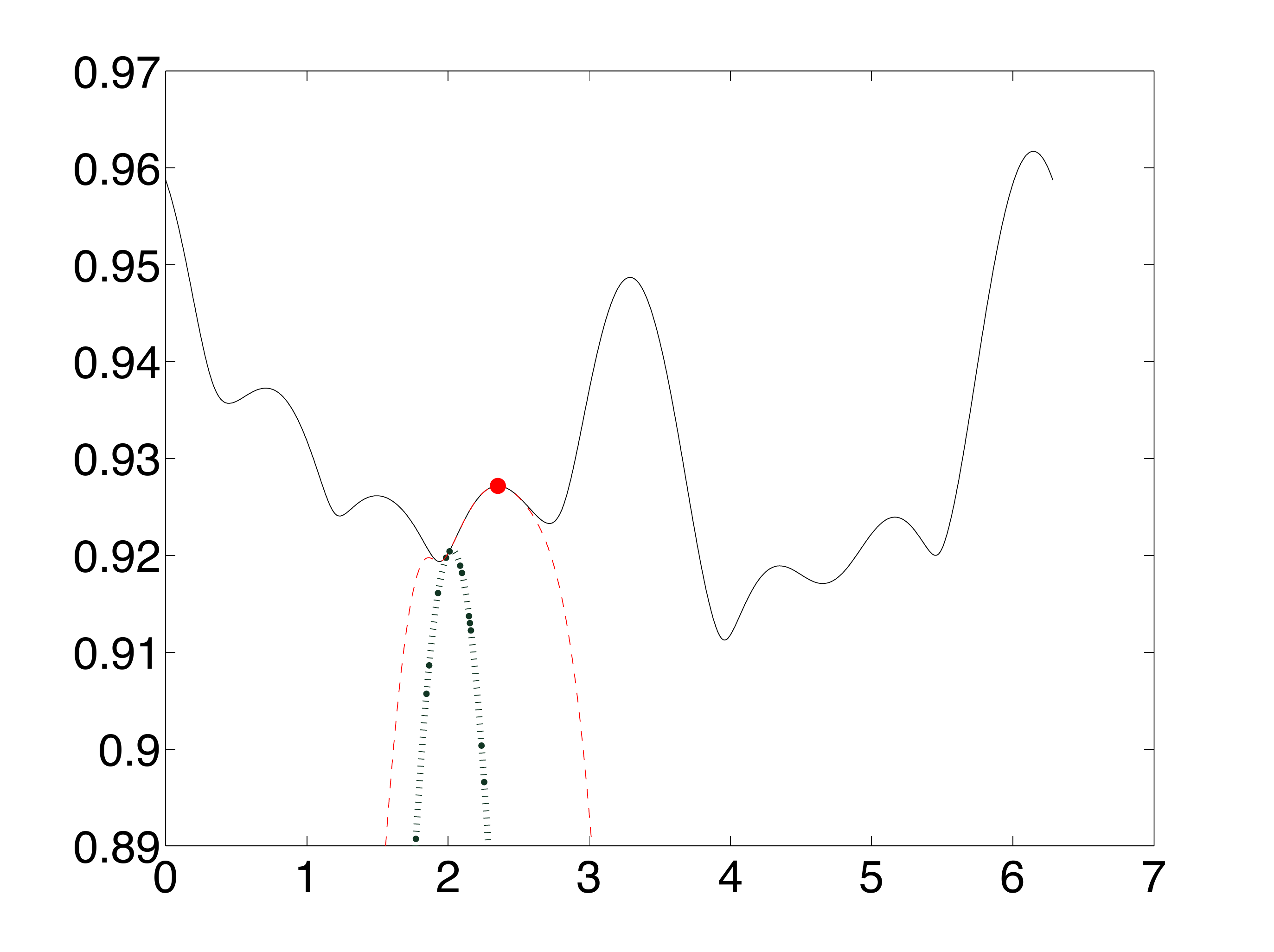} 	&
		\includegraphics[width=.49\textwidth]{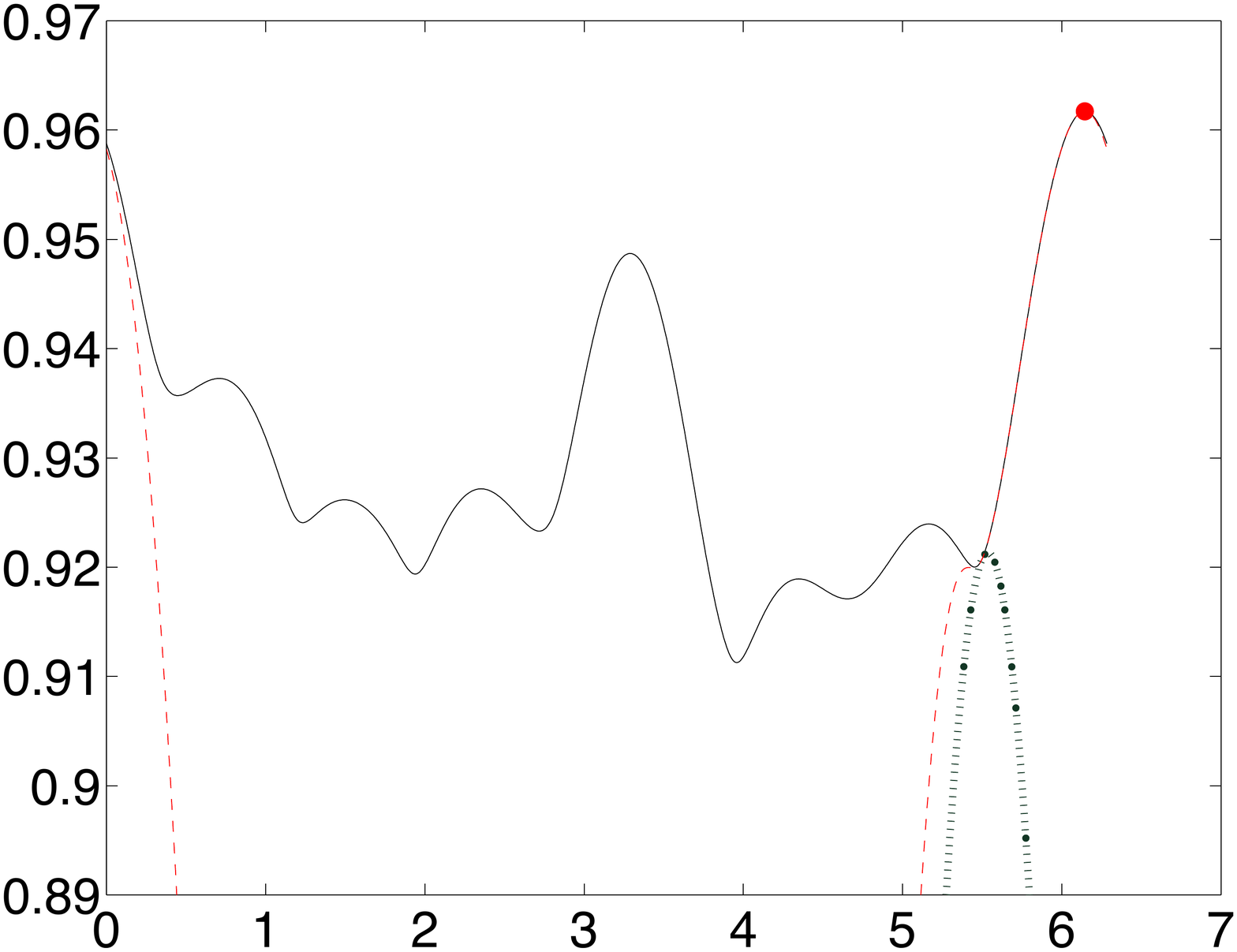} 	\\
		\includegraphics[width=.49\textwidth]{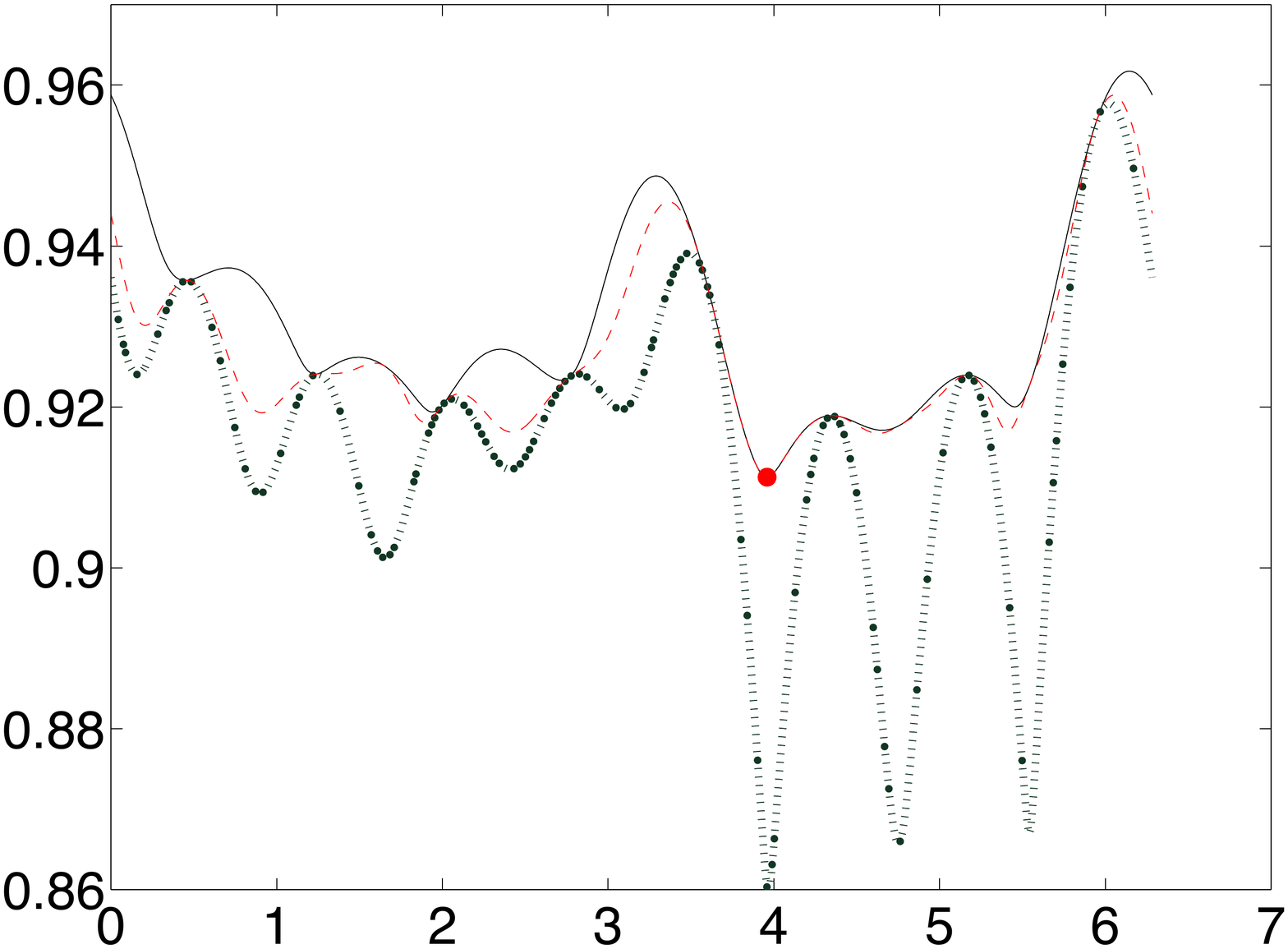} 	&
		\includegraphics[width=.49\textwidth]{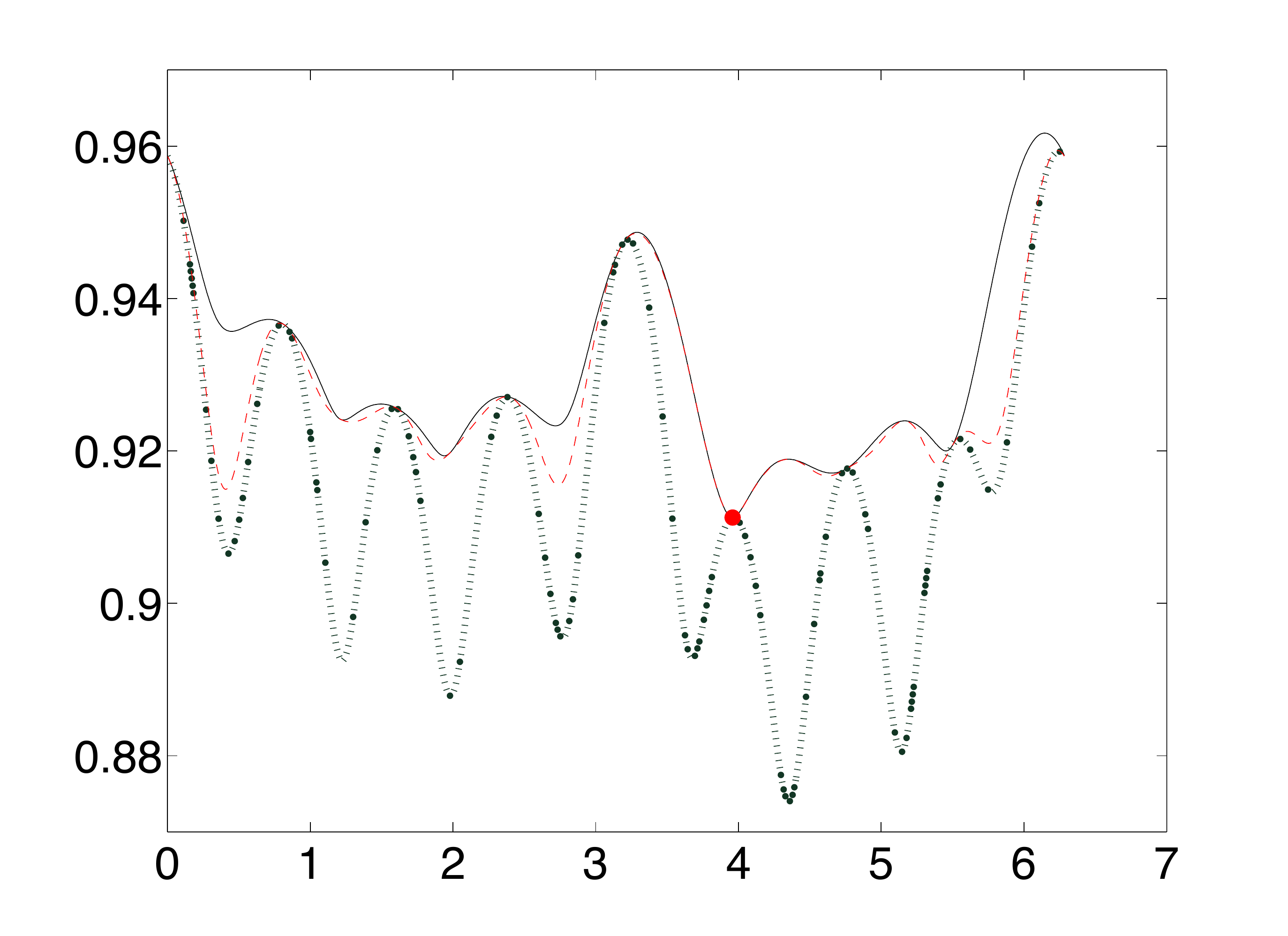}
		\end{tabular}
	  \caption{
	 Top and bottom rows illustrate Algorithm \ref{alg} to maximize and minimize, respectively, the largest eigenvalue $\lambda_1(\omega)$ (solid curves)
	 of $A(\omega)$ as in (\ref{eq:numrad}) over $\omega \in [0,2\pi]$ (the horizontal axis). For both the maximization and the minimization, Algorithm \ref{alg}
	 is initiated with two different initial points $\omega^{(1)} = 2$ on the left column and $\omega^{(1)} = 5.5$ on the right column.
	 The dotted curves on the top and at the bottom represent the reduced eigenvalue functions
	 with one and eight dimensional subspaces, respectively. The dashed curves correspond to the reduced eigenvalue
	 functions right before convergence, with five dimensional subspaces on the top and fifteen dimensional subspaces at the bottom. 
	 The dots represent the converged points, that is denoting the converged $\omega$ value with $\widehat{\omega}$ and
	 letting $\widehat{\lambda} := \lambda_1(\widehat{\omega})$ it marks $(\widehat{\omega}, \widehat{\lambda})$. 
	   	     }
	     \label{fig:convergence}
\end{figure}

\subsection{The Rate-of-Convergence}\label{sec:rate_of_convergence}

Next we are concerned with how quickly the iterates of the subspace procedures converge when they do converge
to a smooth stationary point of $\lambda_J(\omega)$. Note that by Theorem \ref{thm:global_conv}, for the minimization problem, 
if the sequence $\{ \omega^{(k)} \}$ by Algorithm \ref{alge} (or Algorithm \ref{alg}) converges to a point $\omega_\ast$ 
where $\lambda_J(\omega)$ is simple, then $\omega_\ast$ must be a smooth stationary point of $\lambda_J(\omega)$. 
The analysis below applies to Algorithm \ref{alge} (and Algorithm \ref{alg} when $d = 1$) in a unified way, both for the 
minimization problem and for the maximization problem.
\begin{theorem}[Superlinear Convergence]\label{thm:super_convergence}
Suppose that the sequence $\{ \omega^{(k)} \}$ by Algorithm \ref{alge} (or Algorithm \ref{alg} when $d = 1$)
converges to a point $\omega_\ast$ in the interior of $\Omega$ such that
\begin{center}
	\textbf{(i)} $\lambda_J(\omega_\ast)$ is simple, 
	\textbf{(ii)} $\nabla \lambda_J(\omega_\ast) = 0$, and
	\textbf{(iii)} $\nabla^2 \lambda_J(\omega_\ast)$ is invertible.
\end{center}
Then there exists a constant $\alpha > 0$ such that
\begin{equation}\label{eq:quad_conv}
\frac{ \left\| \omega^{(k+1)} - \omega_\ast \right\|_2 }
	{ \left\| \omega^{(k)} - \omega_\ast \right\|_2 \: \max \left\{ \left\| \omega^{(k)} - \omega_\ast \right\|_2,  \left\| \omega^{(k-1)} - \omega_\ast \right\|_2 \right\} }	\;\; \leq \;\;	\alpha
	\;\;\;\;  \forall k.
\end{equation}
\end{theorem}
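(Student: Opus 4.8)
The plan is to read the update $\omega^{(k)}\mapsto\omega^{(k+1)}$ as a quasi-Newton step for the stationarity equation $\nabla\lambda_J(\omega)=0$ and then to run the classical error recursion, with the approximate-Hessian error supplied by Lemma~\ref{thm:sec_der_ext} and all constants kept uniform via Proposition~\ref{prop:uniformb_3rdd}. Write $e_k:=\omega^{(k)}-\omega_\ast$. Since $\omega^{(k)}\to\omega_\ast$ and $\omega_\ast$ is interior, for all large $k$ the optimizer $\omega^{(k+1)}$ of $\lambda_J^{(k)}$ lies in the interior of $\Omega$ and, by the simplicity established in the proof of Lemma~\ref{thm:sec_der_ext}, $\lambda_J^{(k)}$ is smooth there; hence first-order optimality gives $\nabla\lambda_J^{(k)}(\omega^{(k+1)})=0$, for both the minimization and the maximization variants. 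Expanding $\nabla\lambda_J^{(k)}$ about $\omega^{(k)}$ and replacing $\nabla\lambda_J^{(k)}(\omega^{(k)})$ by $\nabla\lambda_J(\omega^{(k)})$ via the Hermite property (Lemma~\ref{thm:first_der}(iv)), I obtain
\[
0 \;=\; \nabla\lambda_J(\omega^{(k)}) + B_k\bigl(\omega^{(k+1)}-\omega^{(k)}\bigr) + r_k,
\qquad B_k:=\nabla^2\lambda_J^{(k)}(\omega^{(k)}),
\]
where, by Proposition~\ref{prop:uniformb_3rdd}, $\|r_k\|_2\le c\,\|\omega^{(k+1)}-\omega^{(k)}\|_2^2$ with $c$ independent of $k$.

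Next I would subtract $\omega_\ast$. Using $\nabla\lambda_J(\omega_\ast)=0$ and Taylor's theorem for the (fixed, smooth) function $\nabla\lambda_J$ about $\omega^{(k)}$ gives $\nabla\lambda_J(\omega^{(k)})=H_k e_k+O(\|e_k\|_2^2)$ with $H_k:=\nabla^2\lambda_J(\omega^{(k)})$, so that
\[
e_{k+1} \;=\; \bigl(I-B_k^{-1}H_k\bigr)e_k \;-\; B_k^{-1}\,O(\|e_k\|_2^2) \;-\; B_k^{-1}r_k .
\]
Here $I-B_k^{-1}H_k=B_k^{-1}(B_k-H_k)$. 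Lemma~\ref{thm:sec_der_ext}(i) yields $\|B_k-H_k\|_2=O(h^{(k)})$, while $B_k\to\nabla^2\lambda_J(\omega_\ast)$ together with assumption (iii) bounds $\|B_k^{-1}\|_2$ uniformly in $k$ (this is also what underlies Lemma~\ref{thm:sec_der_ext}(ii)). Hence the leading term is $O\!\bigl(h^{(k)}\|e_k\|_2\bigr)$ and the second is $O(\|e_k\|_2^2)$.

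To close the estimate I would first note the elementary bootstrap $\|\omega^{(k+1)}-\omega^{(k)}\|_2=O(\|e_k\|_2)$: the step equals $-B_k^{-1}\nabla\lambda_J(\omega^{(k)})-B_k^{-1}r_k$ with $\nabla\lambda_J(\omega^{(k)})=O(\|e_k\|_2)$ and $\|r_k\|_2$ quadratic in the step, so for large $k$ the step is $O(\|e_k\|_2)$ and therefore $\|r_k\|_2=O(\|e_k\|_2^2)$ as well. This leaves
\[
\|e_{k+1}\|_2 \;=\; O\!\bigl(h^{(k)}\|e_k\|_2\bigr) + O(\|e_k\|_2^2).
\]
Finally, writing $M_k:=\max\{\|e_k\|_2,\|e_{k-1}\|_2\}$, the bounds $h^{(k)}=\|\omega^{(k)}-\omega^{(k-1)}\|_2\le\|e_k\|_2+\|e_{k-1}\|_2\le 2M_k$ and $\|e_k\|_2^2\le\|e_k\|_2 M_k$ turn both terms into multiples of $\|e_k\|_2 M_k$, which is exactly the product in the denominator of (\ref{eq:quad_conv}). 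This gives the claimed bound for all large $k$ with a uniform constant; the finitely many small-$k$ ratios are absorbed by enlarging $\alpha$ (assuming $\omega^{(k)}\ne\omega_\ast$ for all $k$, otherwise the sequence is eventually stationary and the statement is trivial).

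The main obstacle is keeping every constant independent of $k$. Because $B_k$, $H_k$ and $r_k$ are all built from the $k$-dependent reduced function $\lambda_J^{(k)}$, the recursion collapses unless the third derivatives of $\lambda_J^{(k)}$ are bounded independently of the subspace dimension — precisely the content of Proposition~\ref{prop:uniformb_3rdd} — and unless $\|B_k^{-1}\|_2$ stays bounded. The second delicate point is bookkeeping the \emph{order} of the Hessian error: it is the estimate $\|B_k-H_k\|_2=O(h^{(k)})$ from Lemma~\ref{thm:sec_der_ext} (rather than a mere $o(1)$) that produces the factor $M_k=\max\{\|e_k\|_2,\|e_{k-1}\|_2\}$ and hence the two-step superlinear, rather than merely linear, rate.
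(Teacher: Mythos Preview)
Your argument is correct and is essentially the paper's proof, just organized slightly differently: the paper multiplies through by the inverse of the \emph{true} Hessian $H_k^{-1}$ and invokes Lemma~\ref{thm:sec_der_ext}(ii) on $\|B_k^{-1}-H_k^{-1}\|$, whereas you multiply by $B_k^{-1}$ and invoke Lemma~\ref{thm:sec_der_ext}(i) on $\|B_k-H_k\|$ --- equivalent via $B_k^{-1}-H_k^{-1}=B_k^{-1}(H_k-B_k)H_k^{-1}$. Likewise, the paper controls the $O([h^{(k+1)}]^2)$ remainder by the crude bound $[h^{(k+1)}]^2\le 2(\|e_k\|_2^2+\|e_{k+1}\|_2^2)$ and absorbs the $\|e_{k+1}\|_2^2$ term at the end, while your bootstrap $\|\omega^{(k+1)}-\omega^{(k)}\|_2=O(\|e_k\|_2)$ does the same absorption one step earlier; either way the same inequality $\|e_{k+1}\|_2=O(h^{(k)}\|e_k\|_2)+O(\|e_k\|_2^2)$ falls out.
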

\begin{proof}
The arguments in the first two paragraphs of the proof of Lemma \ref{eq:accuracy_sec_der} establish
the existence of a ball ${\mathcal B}(\omega_\ast,\eta)$ containing ${\mathcal B}(\omega^{(k)}, h^{(k)})$ for all $k$ large enough, say for $k \geq k'$,
such that $\lambda_J(\omega)$ as well as $\lambda_J^{(k)}(\omega)$ for $k \geq k'$ are simple for all $\omega \in {\mathcal B}(\omega_\ast,\eta)$. 
The Hessians $\nabla^2 \lambda_J(\omega)$ and $\nabla^2 \lambda_J^{(k)}(\omega)$ for $k \geq k'$
are Lipschitz continuous inside this ball. 

Additionally, following the arguments in the proof of part (ii) of Lemma \ref{eq:accuracy_sec_der},
the invertibility of $\nabla^2 \lambda_J(\omega_\ast)$ ensures that $\nabla^2 \lambda_J(\omega^{(k)})$ 
and $\nabla^2 \lambda_J^{(k)}(\omega^{(k)})$ are invertible (indeed 
$\|  [ \nabla^2 \lambda_J(\omega^{(k)}) ]^{-1} \|_2$ and $\| [ \nabla^2 \lambda_J^{(k)}(\omega^{(k)}) ]^{-1} \|_2$
are bounded from above by some constant) for all $k$ large enough, say for $k \geq k'' \geq k'$.

Now for $k \geq k''$ by the Taylor's theorem with integral remainder
\[
				0	\;\; = \;\;
	\nabla \lambda_J\left( \omega_\ast \right)
				\;\; = \;\;
	\nabla \lambda_J( \omega^{(k)} )
				\;\; + \;\; 	
	\int_0^1	\nabla^2 \lambda_J ( \omega^{(k)} + \alpha ( \omega_\ast - \omega^{(k)} ) ) \: ( \omega_{\ast} - \omega^{(k)} ) \; {\mathrm d}\alpha.
\]
Employing $\nabla \lambda_J \left( \omega^{(k)} \right) = \nabla \lambda^{(k)}_J \left( \omega^{(k)} \right)$ (Lemma \ref{thm:first_der}, part (iii))
in this equation and left-multiplying the both sides of the equation by the inverse of $\nabla^2 \lambda_J \left( \omega^{(k)} \right)$ yield
\begin{equation}\label{eq:cor_Taylor}
\begin{split}
				0	\;\; =  \;\;
	\left[ \nabla^2 \lambda_J ( \omega^{(k)} )	 \right]^{-1} \cdot \nabla \lambda^{(k)}_J ( \omega^{(k)} )
				\;\; + \;\; (\omega_\ast - \omega^{(k)}) \;\; + \;\; \hskip 21ex \\ 
	 \hskip -1ex  \left[ \nabla^2 \lambda_J ( \omega^{(k)} )	 \right]^{-1}
	 \int_0^1
		\left[
			\nabla^2 \lambda_J ( \omega^{(k)} + \alpha (\omega_\ast - \omega^{(k)}) )
						-
			\nabla^2 \lambda_J (  \omega^{(k)} )
		\right]
		( \omega_{\ast} - \omega^{(k)} ) \; {\mathrm d}\alpha.
\end{split}
\end{equation}
A second order Taylor expansion of $\nabla \lambda^{(k)}_J \left(  \omega  \right) $ about  $\omega^{(k)}$, noting $\nabla \lambda^{(k)}_J (  \omega^{(k + 1)}  ) = 0$,
implies
\[
	\left[ \nabla^2 \lambda^{(k)}_J ( \omega^{(k)} ) \right]^{-1} \nabla \lambda^{(k)} ( \omega^{(k)} )
				\;\; = \;\;
		- ( \omega^{(k + 1)} - \omega^{(k)}  ) \; + \; 
				O([h^{(k+1)}]^2).
\]
Using this equality in (\ref{eq:cor_Taylor}) leads us to
\begin{equation}\label{eq:cor_Taylor2}
	\begin{split}
	0
		\;\; = \;\; 
	 \omega_\ast - \omega^{(k+1)}  \;\; + \;\;
	 \left\{ \left[ \nabla^2 \lambda_J ( \omega^{(k)} )	\right]^{-1} 	-  
		\left[ \nabla^2 \lambda^{(k)}_J ( \omega^{(k)} )  \right]^{-1}	\right\}
		\nabla \lambda_J ( \omega^{(k)} ) \hskip 3ex \\ 
		 \;\; + \;\;   O([h^{(k+1)}]^2)   \;\; + \;\;  \hskip 35ex \\
	\hskip -1.5ex
	\left[ \nabla^2 \lambda_J ( \omega^{(k)} )	 \right]^{-1} 
	\int_0^1	
		\left[
			\nabla^2 \lambda_J ( \omega^{(k)} + \alpha (\omega_\ast - \omega^{(k)}) )
						-
			\nabla^2 \lambda_J (  \omega^{(k)} )
		\right]
	( \omega_{\ast} - \omega^{(k)} )  \: {\mathrm d}\alpha,
	\end{split}
\end{equation}
which, by taking the 2-norms and employing the triangle inequality, yields
\begin{equation}\label{eq:final_norms}
	\begin{split}
	\| \omega^{(k+1)} - \omega_\ast \|_2	 \;\;  \leq  \;\;
	\left\|  \left[ \nabla^2 \lambda_J ( \omega^{(k)} )	\right]^{-1} 	-   
		\left[ \nabla^2 \lambda^{(k)}_J ( \omega^{(k)} )  \right]^{-1} 	\right\|_2 
		\|  \nabla \lambda_J ( \omega^{(k)} )  \|_2	   \hskip 6ex \\
			+  \;\; O([h^{(k+1)}]^2)  \;\; +  \hskip 41ex \\
		 \left\| \left[ \nabla^2 \lambda_J ( \omega^{(k)} )	 \right]^{-1} \right\|_2 
	\int_0^1	
		\left\|
			\nabla^2 \lambda_J ( \omega^{(k)} + \alpha (\omega_\ast - \omega^{(k)}) )
						-
			\nabla^2 \lambda_J (  \omega^{(k)} )
		\right\|_2
	\| \omega^{(k)}  -  \omega_{\ast}  \|_2  \: {\mathrm d}\alpha.	
	\end{split}
\end{equation}

To conclude with the desired superlinear convergence result, we bound the terms on the right-hand side of the inequality in
(\ref{eq:final_norms}) from above in terms of $\| \omega^{(k+1)} - \omega_\ast \|_2$, $\| \omega^{(k)} - \omega_\ast \|_2$ and $\| \omega^{(k-1)} - \omega_\ast \|_2$.
To this end, first note that the terms in the third line of (\ref{eq:final_norms}) is $O(\| \omega^{(k)}  -  \omega_{\ast}  \|_2^2)$, 
this is because of the boundedness of  $\| [ \nabla^2 \lambda_J ( \omega^{(k)} )	 ]^{-1} \|_2$ and the Lipschitz continuity of
$\nabla^2 \lambda_J (  \omega )$ inside ${\mathcal B}(\omega_\ast,\eta)$. Secondly  
$\|  \nabla \lambda_J ( \omega^{(k)} )  \|_2 = O( \| \omega^{(k)} - \omega_\ast \| )$, as can be seen from a Taylor expansion of $\nabla \lambda_J(\omega)$ 
about $\omega^{(k)}$ and by exploiting $\nabla \lambda_J(\omega_\ast) = 0$. Finally, due to part (ii) of Lemma \ref{thm:sec_der_ext}, we have
	$
		\|    [ \nabla^2 \lambda_J ( \omega^{(k)} )	 ]^{-1} 	-   [ \nabla^2 \lambda^{(k)}_J ( \omega^{(k)} )  ]^{-1} 	\|_2
				=
			O( h^{(k)} ).
	$
Applying all these bounds to (\ref{eq:final_norms}) gives rise to
\[
	\| \omega^{(k+1)} - \omega_\ast \|_2	  \;\; \leq \;\; O(h^{(k)} \| \omega^{(k)} - \omega_\ast \|_2) + O([h^{(k+1)}]^2) + O(\| \omega^{(k)}  -  \omega_{\ast}  \|_2^2).
\]
The desired result (\ref{eq:quad_conv}) follows noting $h^{(k)} \leq 2 \max \{ \| \omega^{(k)} - \omega_\ast \|_2,  \| \omega^{(k-1)} - \omega_\ast \|_2 \}$
and $[h^{(k+1)}]^2 \leq 2 ( \| \omega^{(k)}  -  \omega_{\ast}  \|_2^2  +   \| \omega^{(k+1)}  -  \omega_{\ast}  \|_2^2 )$.
\end{proof}

\noindent
Regarding, specifically, the minimization of the largest eigenvalue when $d=1$, the order of the superlinear rate-of-convergence
for Algorithm \ref{alg} is shown to be at least $1 + \sqrt{2}$ in \cite[Theorem 1]{Kressner2017}.

It does not seem straightforward to extend the rate-of-convergence result above to Algorithm \ref{alg} when $d \geq 2$,
because relations such as the ones given by Lemma \ref{thm:sec_der_ext} between the second derivatives of 
$\lambda_J(\omega)$ and $\lambda_J^{(k)}(\omega)$ are not evident. We observe a superlinear
rate-of-convergence for Algorithm \ref{alg} in practice as well. Algorithm \ref{alge} requires a slightly
fewer iterations to reach a prescribed accuracy, but Algorithm \ref{alg} attains the prescribed accuracy
with subspaces of smaller dimension. These observations are illustrated 
in Table \ref{table:rate_conv} on the problem of minimizing the largest eigenvalue $\lambda_1(\omega)$ of
\begin{equation}\label{eq:affine}
	A(\omega)	\;\;	=	\;\;	A_0		+	\omega_1 A_1			+	\dots		+	\omega_d A_d	
\end{equation}
for given Hermitian matrices $A_0, A_1, \dots, A_d \in {\mathbb C}^{n\times n}$ for $d = 2,3$, where we 
restrict $\omega_1, \dots, \omega_d$ to the interval $[-60,60]$. Such eigenvalue optimization problems 
are convex \cite{Overton1988}, and arise from a classical structural design problem \cite{Cox1992, Lewis1996}.
Additionally, as discussed in Section \ref{sec:min_large_eig}, they are closely related to semidefinite programs.
In Table \ref{table:rate_conv} the minimal values $\lambda_1^{(k)}(\omega^{(k+1)})$ of the reduced eigenvalue functions 
$\lambda_1^{(k)}(\omega)$, as well as the error $\| \omega^{(k+1)} - \omega_\ast \|_2$ converge superlinearly with respect to $k$
both for $d = 2$ and for $d = 3$. In both cases the extended subspace procedure on the right column achieves 10 decimal digits
accuracy (in the sense that $\lambda_1(\omega_\ast) - \lambda_1^{(k)}(\omega^{(k+1)}) \leq 10^{-10}$) after 8 iterations
but with subspaces of dimension 32 and 56 for $d = 2$ and $d = 3$, respectively. On the other hand, the basic subspace procedure 
on the left column requires 11 and 15 iterations, which are also the dimensions of the subspaces constructed, 
for $d=2$ and $d = 3$, respectively,  to achieve the same accuracy. 
Observe also that the minimal values $\lambda_1^{(k)}(\omega^{(k+1)})$ seem
to converge at a rate even faster than the rate-of-decay of the errors $\| \omega^{(k+1)} - \omega_\ast \|_2$. 

\begin{center}
\begin{table}

\hskip -2ex
\begin{tabular}{cccc|cccc}
	\multicolumn{8}{c}{$d = 2$}	\\
	\hline
	\hline
	\multicolumn{4}{c}{{\sc Algorithm \ref{alg}}}			\hskip 20ex								&			\multicolumn{4}{c}{{\sc Algorithm \ref{alge}}}	\hskip 20ex \\
	$k$	&	$p$ & $\lambda_1^{(k)}(\omega^{(k+1)})$	&	$\| \omega^{(k+1)} - \omega_\ast \|_2$ 	& 	$k$	& $p$ &  $\lambda_1^{(k)}(\omega^{(k+1)})$		&	$\| \omega^{(k+1)} - \omega_\ast \|_2$	\\
	\hline
	6		&	6	&	27.8402784689			&	0.3245035160						&	3	&	12	&	7.0566934870			&	0.3112458616	\\
	7		&	7	&	27.9933586806			&	0.1134918678						&	4	&	16	&	24.9336629638			&	0.5250223403	\\
	8		&	8	&	28.4522934270			&	0.0247284535						&	5	&	20	&	27.6531948787			&	0.1830729388	\\
	9		&	9	&	28.5008552358			&	0.0007732659						&	6	&	24	&	28.4440816653			&	0.0142076774	\\
	10		&	10	&	28.5010522075			&	0.0000243843						&	7	&	28	&	28.5010327361			&	0.0001556439	\\
	11		&	11	&	28.5010523924			&	0.0000000694						&	8	&	32	&	28.5010523924			&	0.0000000344	\\
\end{tabular}

\vskip 2ex

\hskip -2ex
\begin{tabular}{cccc|cccc}
	\multicolumn{8}{c}{$d = 3$}	\\
	\hline
	\hline
	\multicolumn{4}{c}{{\sc Algorithm \ref{alg}}}		\hskip 20ex									&			\multicolumn{4}{c}{{\sc Algorithm \ref{alge}}} \hskip 20ex	\\
	$k$	&	$p$ 	&	$\lambda_1^{(k)}(\omega^{(k+1)})$	&	$\| \omega^{(k+1)} - \omega_\ast \|_2$ 	& 	$k$	&	$p$  &  $\lambda_1^{(k)}(\omega^{(k+1)})$		&	$\| \omega^{(k+1)} - \omega_\ast \|_2$	\\
	\hline
	10		&		10	&	28.1244577720			&	0.1137180915						&	3	&	21	&	22.9802867532				&	0.8996337159 \\
	11		&		11	&	28.1897386962			&	0.0482696055						&	4	&	28	&	26.1225009081				&	0.6602340340	\\
	12		&		12	&	28.2359716412			&	0.0051064698						&	5	&	35	&	27.0151485822				&	0.1737449902	\\
	13		&		13	&	28.2388852363			&	0.0005705897						&	6	&	42	&	28.0850898434				&	0.0374522435	\\
	14		&		14	&	28.2389043164			&	0.0000124871						&	7	&	49	&	28.2387293186				&	0.0002142688	\\
	15		&		15	&	28.2389043663			&	0.0000001102						&	8	&	56	&	28.2389043663				&	0.0000001112	\\
\end{tabular}


\caption{ \noindent The minimization of the largest eigenvalues of two particular matrix-valued functions of the form (\ref{eq:affine})
with two and three parameters using Algorithm \ref{alg} and Algorithm \ref{alge}. The minimal values $\lambda_1^{(k)}(\omega^{(k+1)})$ of the reduced 
eigenvalue functions, as well as the errors  $\| \omega^{(k+1)} - \omega_\ast \|$ of the minimizers are listed with respect to $k$ and 
$p := {\rm dim} \: {\mathcal S}_k$. (Top Row) $d = 2$, $A_2, A_1, A_0$ are $400\times 400$ such that $A_2$ is random symmetric, $A_1$ is random 
Hermitian, whereas $A_0$ is obtained by adding  $2A_2 - A_1$ to another random symmetric matrix; 
(Bottom Row) $d=3$, $A_3, A_2, A_1, A_0$ are $400\times 400$ such that $A_3, A_2, A_1$ are random symmetric, 
while $A_0$ is obtained from a random symmetric matrix by adding  $A_3 + 2A_2 - 3A_1$. }
\label{table:rate_conv}
\vskip -5ex
\end{table}
\end{center}

\vskip -3ex

In the one parameter case (i.e., $d=1$) Theorem \ref{thm:super_convergence} applies to Algorithm \ref{alg} as well
to establish its superlinear convergence. The convergence of Algorithm \ref{alg} on the example of 
Figure \ref{fig:convergence} (concerning the minimization or maximization of the largest eigenvalue of the matrix-valued 
function in (\ref{eq:numrad})) starting with $\omega^{(1)} = 5.5$ is depicted in Table \ref{table:rate_conv2}. For the maximization 
and minimization the iterates $\{ \omega^{(k)} \}$ converge to the global maximizer ($\approx 6.145$) and global 
minimizer ($\approx 3.959$)  of $\lambda_1(\omega)$ at a superlinear rate, which is realized earlier for the maximization 
problem. The optimal values of the reduced eigenvalue function $\lambda_1^{(k)}(\omega^{(k+1)})$ converge to the globally 
maximal value and minimal value of $\lambda_1(\omega)$ even at a faster rate. 

\begin{center}
\begin{table}

\hskip 15ex
\begin{tabular}{cccc}
\multicolumn{4}{c}{\sc Maximization}		\\
	\hline
	\hline
	$k$	&	$\lambda_1^{(k)}(\omega^{(k+1)})$	&	$| \omega^{(k+1)} - \omega_\ast |$ 	&	$\omega^{(k+1)}$	 \\
	\hline
	1	&	0.9213417656		&	0.6098999764		&	5.5354507094	\\
	2	&	0.9272678333		&	0.4809562556		&	5.6643944303	\\
	3	&	0.9452307484		&	0.2569647709		&	5.8883859150	\\
	4	&	0.9596256053		&	0.0659648186		&	6.0793858672	\\
	5	&	0.9617115893		&	0.0019013105		&	6.1434493753	\\
	6	&	0.9617265293		&	0.0000002797		&	6.1453504061	\\
\end{tabular}


\hskip 15ex
\begin{tabular}{cccc}
\multicolumn{4}{c}{\sc Minimization}		\\
	\hline
	\hline
	$k$	&	$\lambda_1^{(k)}(\omega^{(k+1)})$	&	$| \omega^{(k+1)} - \omega_\ast |$ 	&	$\omega^{(k+1)}$	 \\
	\hline
	10	&	0.8905833092		&	1.2006093051		&	5.1592214939 	\\
	11	&	0.8953986993		&	2.7480373651		&	1.2105748237	\\
	12	&	0.9039755162		&	0.2734960246		&	3.6851161642	\\
	13	&	0.9112584133		&	0.0063764350		&	3.9522357538	\\
	14	&	0.9112669429		&	0.0000006079		&	3.9586115809	\\
	15	&	0.9112669442		&	0.0000000000		&	3.9586121888	\\
\end{tabular}


\caption{ \noindent Iterates of Algorithm \ref{alg} on the example depicted on the right-hand column of 
Figure \ref{fig:convergence} involving the maximization and minimization of the largest eigenvalue of a matrix-valued 
function of the form (\ref{eq:numrad}) starting with $\omega^{(1)} = 5.5$. The optimal values $\lambda_1^{(k)}(\omega^{(k+1)})$ 
of the reduced eigenvalue functions, the errors $| \omega^{(k+1)} - \omega_\ast |$ of the optimizers, 
the optimizers $\omega^{(k+1)}$ are listed with respect to $k$.
}
\label{table:rate_conv2}
\vskip -5ex
\end{table}
\end{center}

\normalsize
\section{Variations and Extensions}\label{sec:var_ext}

\subsection{A Greedy Subspace Procedure without Past}\label{sec:subspace_nopast}
The rate-of-convergence analysis of the previous section and, in particular, the proof of Theorem \ref{thm:super_convergence},
for Algorithm \ref{alge} (for Algorithm \ref{alg} when $d=1$) makes use of the eigenvectors added into the subspace in the last iteration
(in the last two iterations) only. Hence Theorem \ref{thm:super_convergence} and its superlinear convergence assertion still hold for 
Algorithm \ref{alge} even if its line 11 is changed as
\[
	{\mathcal S}_k \; \gets \; {\rm span} \left\{  s^{(k)}_1, \dots, s^{(k)}_{J}  \right\} 
				\; \bigoplus \; \left\{ \bigoplus_{p = 1, q = p}^d {\rm span} \left\{  s^{(k)}_{1,p q}, \dots, s^{(k)}_{J, p q}  \right\} \right\},
\]
that is even if the previous subspace is completely discarded. We refer to this variant of Algorithm \ref{alge} as
\emph{Algorithm \ref{alge} without past}. Similarly for Algorithm \ref{alg} when $d = 1$ 
the superlinear convergence assertion of Theorem \ref{thm:super_convergence} still holds
if only the eigenvectors from the last two iterations are kept inside the subspace, that is
if line 7 of Algorithm \ref{alg} is replaced by
\[
	{\mathcal S}_k \; \gets \; {\rm span} \left\{  s^{(k-1)}_1, \dots, s^{(k-1)}_J  \right\} \oplus {\rm span} \left\{  s^{(k)}_1, \dots, s^{(k)}_J  \right\}.
\]
We refer to this variant of Algorithm \ref{alg} for the case $d=1$ as \emph{Algorithm \ref{alg} without past}.

The main issue with these subspace procedures without past is the convergence. In Section \ref{sec:global_convergence} the convergence of 
the sequence $\{ \lambda_J^{(k)}(\omega^{(k+1)}) \}$ is established both for the minimization problem and for the maximization problem
(by Theorem \ref{thm:global_conv} and Theorem \ref{thm:global_conv_max}, respectively).
When the eigenvectors from the past iterations are discarded, the convergence of $\{ \lambda_J^{(k)}(\omega^{(k+1)}) \}$
is not guaranteed anymore for the minimization problem, 
because the  monotonicity of $\lambda_J^{(k)}(\omega)$ with respect to $k$ is no longer true. On the other hand, all the equalities 
and inequalities in (\ref{eq:global_conv_max}) concerning the monotonicity and boundedness of $\{ \lambda_J^{(k)}(\omega^{(k+1)}) \}$
for the maximization problem can be verified to hold, so this sequence is still guaranteed to converge. 

The remarks of the previous paragraph are illustrated in Table \ref{table:nopast} 
which concerns the application of Algorithm \ref{alg} without past  to the example of Table \ref{table:rate_conv2}. 
For the maximization problem, the maximizers $\lambda_1^{(k)}(\omega^{(k+1)})$	 
of the reduced eigenvalue functions $\lambda_1^{(k)}(\omega)$ converge to the globally largest value of $\lambda_1(\omega)$ at a 
superlinear rate with respect to $k$, even though for every $k$ the subspace dimension is two. For the minimization problem,
the values in the table depict that the sequence $\{ \lambda_1^{(k)}(\omega^{(k+1)}) \}$ does not converge. 

The subspace procedures without past work effectively in practice for the maximization problem. 
On the other hand, for the minimization problem, it may fail to converge. In \cite{Kressner2017}
a convergent subspace framework making use of three dimensional subspaces is devised 
for the Crawford number computation, which involves the maximization of the smallest eigenvalue (equivalently the minimization 
of the largest eigenvalue) of a matrix-valued function depending on one variable. The three dimensional subspaces are formed of 
eigenvectors from the past iterations, but not necessarily the eigenvectors from the last three iterations. The approach is built on 
the concavity of  the smallest eigenvalue function involved and the knowledge of an interval containing the global maximizer. 
It does not seem easy to extend the ideas in \cite{Kressner2017} to the general nonconvex setting that involves the 
minimization of $\lambda_J(\omega)$.

\begin{center}
\begin{table}

\hskip 23ex
\begin{tabular}{cc||cc}
	\multicolumn{2}{c}{\sc Maximization}								&			\multicolumn{2}{c}{\sc Minimization}	 \\
	$k$	&	$\lambda_1^{(k)}(\omega^{(k+1)})$						&			$k$	&	$\lambda_1^{(k)}(\omega^{(k+1)})$	\\
	\hline
	1		&	0.9213417656									&			45				&	-0.3266668608	  \\
	2		&	0.9322945953									&			46				&	-0.5147292664	  \\
	3		&	0.9595070069									&			47				&	-0.3348522694	  \\
	4		&	0.9615567360									&			48				&	-0.4859510415  \\
	5		&	0.9617263748									&			49				&	-0.3259949226  \\
	6		&	0.9617265294									&			50				&	-0.5140279537 
\end{tabular}


\caption{ \noindent The optimal values $\lambda_1^{(k)}(\omega^{(k+1)})$
of the reduced eigenvalue functions by the variant of Algorithm \ref{alg} that keeps only the subspaces from 
the last two iterations are listed with respect to $k$ for the example of the right column of Figure \ref{fig:convergence}
(which involves the maximization and minimization of the largest eigenvalue of a matrix-valued 
function of the form (\ref{eq:numrad}) starting with $\omega^{(1)} = 5.5$).}
\label{table:nopast}
\vskip -3ex
\end{table}
\end{center}

\subsection{Subspace Procedures for Singular Value Optimization}\label{sec:singular_vals}
The ideas of the previous sections can be extended to maximize or minimize the $J$th largest
singular value $\sigma_J(\omega)$  of a compact operator 
\begin{equation}\label{eq:oper_param}
		{\mathbf B}(\omega)	:=	\sum_{\ell = 1}^\kappa		f_\ell(\omega)	{\mathbf B}_\ell
\end{equation}
over a compact subset $\Omega$ of ${\mathbf R}^d$. As before ${\mathbf B}(\omega)$ is defined over $\omega$ that belongs to an open subset 
$\overline{\Omega}$ containing the feasible region $\Omega$.
In representation (\ref{eq:oper_param}) of ${\mathbf B}(\omega)$ above $f_\ell : \overline{\Omega} \rightarrow {\mathbb R}$ is a real-analytic function and 
${\mathbf B}_\ell : \ell^2({\mathbb N}) \rightarrow \ell^2 ({\mathbb N})$ is a compact operator for $\ell = 1,\dots,\kappa$.  Once again, 
the infinite-dimensional problem is motivated by the finite dimensional case where the matrix-valued function
\begin{equation}\label{eq:mat_func2d}
		B(\omega)	:=	\sum_{\ell = 1}^\kappa		f_\ell(\omega)	B_\ell,
\end{equation}
for given $B_\ell \in {\mathbb C}^{p\times q} \; \ell = 1,\dots,\kappa$ with large dimensions,
takes the role of ${\mathbf B}(\omega)$.

Let ${\mathcal U}, {\mathcal V}$ be two subspaces of $\ell^2({\mathbb N})$ with equal dimension,
and let $U, \: V$ be orthonormal bases for these subspaces. 
The subspace procedures to optimize $\sigma_J(\omega)$ are based on the optimization of the $J$th largest 
singular value $\sigma_J^{{\mathcal U}, {\mathcal V}}(\omega)$ of an operator of the form
\begin{equation}\label{eq:oper_param_red}
	{\mathbf B}^{\mathcal U, \mathcal V}(\omega)	:=	{\mathbf U}^\ast {\mathbf B}(\omega) {\mathbf V}
\end{equation}
where ${\mathbf V}$ (${\mathbf U}$) represents the operator as in (\ref{eq:change_coor}) or (\ref{eq:change_coor2})
that maps the coordinates of a vector in ${\mathcal V}$ (${\mathcal U}$) relative to the basis $V$ ($U$) to itself. 
This operator can be written as
\[
	{\mathbf B}^{{\mathcal U}, {\mathcal V}}(\omega)	=	
					\sum_{\ell = 1}^\kappa	f_\ell(\omega)	{\mathbf B}^{{\mathcal U}, {\mathcal V}}_\ell
	\;\;\; {\rm where} \;\;\;
	{\mathbf B}^{{\mathcal U}, {\mathcal V}}_\ell := {\mathbf U}^\ast {\mathbf B}_\ell {\mathbf V},
\]
which helps to reduce computational costs.

The subspace projections and restrictions here for singular value optimization are closely related to the ones employed 
for eigenvalue optimization. Indeed $\sigma_J(\omega)$ and $\sigma_J^{{\mathcal U}, {\mathcal V}}(\omega)$ correspond to 
the $J$th largest eigenvalues of
\[
	\begin{split}
	\left[
		\begin{array}{cc}
			0					&	{\mathbf B}(\omega)	\\
			{\mathbf B}^\ast	(\omega)	&	0	\\
		\end{array}
	\right]
	\quad
	{\rm and}	\quad & \hskip 2.5ex
	\left[
		\begin{array}{cc}
			0											&	{\mathbf U}^\ast {\mathbf B}(\omega)	 {\mathbf V} \\
			{\mathbf V}^\ast {\mathbf B}^\ast(\omega) {\mathbf U}	&	0	\\
		\end{array}
	\right]	\\
		&	=  
	\left[
		\begin{array}{cc}
			{\mathbf U}^\ast	&	{\mathbf V}^\ast
		\end{array}
	\right]
	\left[
		\begin{array}{cc}
			0					&	{\mathbf B}(\omega)	\\
			{\mathbf B}^\ast	(\omega)	&	0	\\
		\end{array}
	\right]
	\left[	
		\begin{array}{c}
		{\mathbf U}	\\
		{\mathbf V}	\\
		\end{array}
	\right],
	\end{split}
\]
respectively. Hence the results in Section \ref{sec:eig_subspace}, specifically 
Lemma \ref{lemma:Lipschitz_continuity} - \ref{thm:low_rank} and Theorem \ref{thm:accuracy_rproblems},
extend to relate the singular values $\sigma_J(\omega)$ and $\sigma_J^{{\mathcal U}, {\mathcal V}}(\omega)$.
For instance, monotonicity amounts to the following: for four subspaces ${\mathcal U}_1, {\mathcal U}_2, {\mathcal V}_1, {\mathcal V}_2$
of $\ell^2({\mathbb N})$ such that 
$\; {\mathcal U}_1 \subseteq {\mathcal U}_2$ and
${\mathcal V}_1 \subseteq {\mathcal V}_2$, we have 
\[
	\sigma_J^{{\mathcal U}_1, {\mathcal V}_1}(\omega)		\quad	\leq	\quad		\sigma_J^{{\mathcal U}_2, {\mathcal V}_2}(\omega)	\quad	\leq	\quad	\sigma_J(\omega).	
\]

The extended greedy subspace procedure for singular value optimization forms ${\mathcal U}$
and ${\mathcal V}$ from the left singular vectors and right singular vectors
of ${\mathbf B}(\omega)$ at the optimizers of the reduced problems and at nearby points.
A precise description is given in Algorithm \ref{algse} below, where ${\mathcal U}_k$ and ${\mathcal V}_k$ denote
the left and the right subspace at step $k$ and $\sigma_J^{(k)}(\omega) := \sigma_J^{{\mathcal U}_k, {\mathcal V}_k}(\omega)$.
The basic greedy procedure is defined similarly by modifying lines 11 and 12 as
\begin{equation}\label{eq:algsvdb}
	{\mathcal U}_k \; \gets \; {\mathcal U}_{k-1} \oplus {\rm span} \left\{  u^{(k)}_1, \dots, u^{(k)}_{J}  \right\} 
				\;\;
				{\rm and}
				\;\;
	{\mathcal V}_k \; \gets \; {\mathcal V}_{k-1} \oplus {\rm span} \left\{  v^{(k)}_1, \dots, v^{(k)}_{J}  \right\}.
\end{equation}
In the description, by a consistent pair of left and right singular vectors $u$ and $v$ corresponding to a 
singular value $\sigma$ of ${\mathbf B}(\omega)$, we mean the vectors satisfying ${\mathbf B}(\omega) v = \sigma u$ and 
${\mathbf B}^\ast (\omega) u = \sigma v$ simultaneously.


\begin{algorithm}
 \begin{algorithmic}[1]
  \REQUIRE{ A parameter dependent compact  operator ${\mathbf B}(\omega)$ of the form (\ref{eq:mat_func2d}), 
  and a compact subset $\;\; \Omega \subset \overline{\Omega} \;\;$ of ${\mathbb R}^d$.}
\STATE $\omega^{(1)} \gets$ a random point in $\Omega$.
\STATE $u^{(1)}_1, \dots, u^{(1)}_{J}$  and $v^{(1)}_1, \dots, v^{(1)}_{J}$ $\gets $ consistent left and right singular vectors  
corresponding to $\sigma_1(\omega^{(1)}), \dots, \sigma_{J}(\omega^{(1)})$.
\STATE ${\mathcal U}_1	\;	\gets \; 
		{\rm span} \left\{  u^{(1)}_1, \dots, u^{(1)}_{J}  \right\}.
		$
		and
	${\mathcal V}_1	\;	\gets \; 
		{\rm span} \left\{  v^{(1)}_1, \dots, v^{(1)}_{J}  \right\}.
		$
	
\FOR{$k \; = \; 2, \; 3, \; \dots$}
	\STATE  
			$\omega^{(k)}$ \hskip -0.4ex 
			$\gets$ \hskip -0.4ex any $\omega_\ast \in \arg\min_{\omega \in \Omega} \sigma^{(k-1)}_J(\omega) \;\;$ for the minimization problem, or \\
			$\omega^{(k)}$ \hskip -0.4ex 
			$\gets$ \hskip -0.4ex any $\omega_\ast  \in \arg\max_{\omega \in \Omega} \sigma^{(k-1)}_J(\omega) \;\;$ for the maximization problem.
	\STATE $u^{(k)}_1, \dots, u^{(k)}_{J}$ and $v^{(1)}_1, \dots, v^{(1)}_{J}$ $  \gets $ consistent left and right
	singular vectors corresponding to $\sigma_1(\omega^{(k)}), \dots, \sigma_{J}(\omega^{(k)})$. 
	\STATE $h^{(k)} \gets \| \omega^{(k)} - \omega^{(k-1)} \|_2$	
	\FOR{$p \; = \; 1, \dots, d, \quad q \; = \; p, \dots, d$}
		\STATE $u^{(k)}_{1,p q}, \dots, u^{(k)}_{J, p q}$ and $v^{(k)}_{1,p q}, \dots, v^{(k)}_{J, p q}$  $\gets $ consistent left and right singular \\
		vectors corresponding to $\sigma_1(\omega^{(k)} + h^{(k)} e_{pq}), \dots, \sigma_J(\omega^{(k)} + h^{(k)} e_{pq})$.
	\ENDFOR
	\STATE ${\mathcal U}_k \; \gets \; {\mathcal U}_{k-1} \oplus {\rm span} \left\{  u^{(k)}_1, \dots, u^{(k)}_{J}  \right\} 
				\oplus \left\{ \bigoplus_{p = 1, q = p}^d {\rm span} \left\{  u^{(k)}_{1,p q}, \dots, u^{(k)}_{J, p q}  \right\} \right\}$.
	\STATE ${\mathcal V}_k \; \gets \; {\mathcal V}_{k-1} \oplus {\rm span} \left\{  v^{(k)}_1, \dots, v^{(k)}_{J}  \right\} 
				\oplus \left\{ \bigoplus_{p = 1, q = p}^d {\rm span} \left\{  v^{(k)}_{1,p q}, \dots, v^{(k)}_{J, p q}  \right\} \right\}$.
\ENDFOR
 \end{algorithmic}
\caption{\small The Extended Greedy Subspace Procedure for Singular Value Optimization \normalsize}
\label{algse}
\end{algorithm}

Observe that the reduced singular value function $\sigma_J^{(k)}(\omega)$
is the same as $\lambda_J^{(k)}(\omega)$ formed by Algorithm \ref{alge} when it is applied to
\begin{equation}\label{eq:sval_matfun}
	{\mathbf A}(\omega)
		:=
	\left[
		\begin{array}{cc}
			0						&	{\mathbf B}(\omega)		\\
			{\mathbf B}^\ast(\omega)		&	0
		\end{array}
	\right],
\end{equation}
so Algorithm \ref{algse} applied to ${\mathbf B}(\omega)$ and Algorithm \ref{alge} applied to ${\mathbf A}(\omega)$
lead the same sequence $\{ \omega^{(k)} \}$. Similarly, the basic greedy subspace procedure 
for singular value optimization is equivalent to Algorithm \ref{alg} operating on ${\mathbf A}(\omega)$.
All of the convergence results deduced in Section \ref{sec:convergence} for eigenvalue optimization,
in particular
\begin{enumerate}
	\item global convergence for the minimization problem (Theorem \ref{thm:global_conv}), 
	\item convergence of the sequence of maximal values of the reduced problems for the maximization problem (Theorem \ref{thm:global_conv_max}),
	\item superlinear rate-of-convergence for smooth optimizers (Theorem \ref{thm:super_convergence}), 
\end{enumerate} 
carry over to this singular value optimization setting. Some of these results in Section \ref{sec:convergence} 
are proven assuming the simplicity of $\lambda_J(\omega_\ast)$ at a particular $\omega_\ast$, which translates into
a simplicity and a positivity assumption on $\sigma_J(\omega_\ast)$ in the singular value setting.
Regarding issue 2. above, we observe in practice the convergence of $\{ \sigma_J^{(k)} (\omega^{(k+1)}) \}$
for the maximization problem to $\sigma_\ast$ such that $\sigma_J(\widetilde{\omega}) = \sigma_\ast$
for some local maximizer $\widetilde{\omega}$, that is not necessarily a global maximizer,
similar to what we observe in the eigenvalue optimization setting.

\subsection{Optimization of the $J$th Smallest Singular Value}\label{sec:small_singular_vals}
The minimum (maximum) of the $J$th smallest eigenvalue $\lambda_{-J}(\omega)$ of ${\mathbf A}(\omega)$ 
is equal to the negative of the maximum (minimum) of the $J$th largest eigenvalue of $-{\mathbf A}(\omega)$.
Hence Algorithm \ref{alg} and Algorithm \ref{alge} can be adapted to optimize $\lambda_{-J}(\omega)$.

The optimization of the $J$th smallest singular value has a different nature. In particular it cannot be converted
into an optimization problem involving the $J$th largest singular value. This is partly seen by the observation
that the $J$th smallest singular value $\sigma_{-J}(\omega)$ of an operator ${\mathbf B}(\omega)$ of the 
form (\ref{eq:oper_param}) corresponds to an eigenvalue of ${\mathbf A}(\omega)$ as in (\ref{eq:sval_matfun}),
right in the middle of its spectrum. For the restricted operator ${\mathbf B}^{{\mathcal U},{\mathcal V}}(\omega)$ defined 
as in (\ref{eq:oper_param_red}) and its $J$th smallest singular value $\sigma_{-J}^{{\mathcal U}, {\mathcal V}}(\omega)$,
monotonicity is lost as the subspaces ${\mathcal U}, {\mathcal V}$ expand. In particular
$\sigma_{-J}^{{\mathcal U},{\mathcal V}}(\omega) \geq \sigma_{-J}(\omega)$ does not necessarily hold;
the restriction of the domain of ${\mathbf B}(\omega)$ to ${\mathcal V}$ causes an increase
in the $J$th smallest singular value, whereas the projection of the range onto ${\mathcal U}$ causes
a decrease in the singular value. As a consequence of the loss of monotonicity, the interpolation 
properties do not hold anymore either.
	
A neater and theoretically more-sound approach is to employ only the restrictions from the right-hand side, that is a subspace procedure that operates on ${\mathbf B}^{\mathcal V}(\omega) := {\mathbf B}(\omega) {\mathbf V}$ and its $J$th smallest
singular value. The resulting subspace procedures are equivalent to those for eigenvalue optimization
applied to ${\mathbf B}^\ast(\omega) {\mathbf B}(\omega)$, so monotonicity and interpolation properties 
as well as all the theoretical convergence properties are regained.

\subsection{A Comparison with the Cutting-Plane Methods}\label{sec:cutting_plane}
The cutting plane method was introduced by Kelley to solve convex minimization problems \cite{Kelley1960}.
For unconstrained minimization problems, the approach under-estimates convex functions with piece-wise linear 
functions globally, solves the resulting linear program, and refines the under-estimator with the addition of the 
new linear approximation about the optimizer of the linear program \cite[Section 9.3.2]{Bonnans2006}.

Let us consider again the minimization of the largest eigenvalue $\lambda_1(\omega)$ of an affine
and Hermitian matrix-valued function
\[
	A(\omega)	\; 	=	\;	A_0 		+ 	\omega_1	A_j	+	\dots		+	\omega_d A_d
\]
for given Hermitian matrices $A_0, \dots, A_d \in {\mathbb C}^{n\times n}$. Recall that the largest eigenvalue $\lambda_1(\omega)$
is convex \cite{Overton1988}, so the cutting-plane method is applicable for its minimization as outlined in Algorithm \ref{algcut}. 
In this description $\widehat{\lambda}^{(k)}_1(\omega)$ represents the piece-wise linear under-estimator for $\lambda_1(\omega)$
and is defined by
\begin{eqnarray*}
	\widehat{\lambda}^{(k)}_1(\omega) & := &	\max_{\ell = 1,\dots,k}  \;\;
							\lambda_1\left( \widehat{\omega}^{(\ell)} \right) +  
							\nabla \lambda_1 \left( \widehat{\omega}^{(\ell)} \right)^T \left( \omega - \widehat{\omega}^{(\ell)} \right)	\\
					&	=	&
							\max_{\ell = 1,\dots,k}	\;\;	\left( \widehat{v}^{(\ell)} \right)^\ast A(\omega) \: \widehat{v}^{(\ell)},						
\end{eqnarray*}				
where $\widehat{v}^{(\ell)}$ denotes a unit eigenvector corresponding to the largest eigenvalue of $A\left( \widehat{\omega}^{(\ell)} \right)$.
Furthermore, in line 4 of the outline, $\widehat{\omega}^{(k)}$ denotes the unique minimizer of the convex function 
$\widehat{\lambda}^{(k-1)}_1(\omega)$.

\begin{algorithm}
 \begin{algorithmic}[1]
  \REQUIRE{ A matrix-valued function $A : {\mathbb R}^d \rightarrow {\mathbb C}^{n\times n}$ that is affine and Hermitian}
\STATE $\widehat{\omega}^{(1)} \gets$ a random vector in ${\mathbb R}^d$.
\STATE $\widehat{v}^{(1)} \; \gets \;$ a unit eigenvector corresponding to largest eigenvalue of $A\left( \widehat{\omega}^{(1)} \right)$.
\FOR{$k \; = \; 2, \; 3, \; \dots$}
	\STATE  $\widehat{\omega}^{(k)} \gets \arg\min_{\omega \in \Omega} \widehat{\lambda}^{(k-1)}_1(\omega) \;\;$.		
	\STATE $\widehat{v}^{(k)} \; \gets \;$ a unit eigenvector corresponding to largest eigenvalue of $A\left( \widehat{\omega}^{(k)} \right)$. 
\ENDFOR
 \end{algorithmic}
\caption{The Cutting-Plane Method to Minimize the Largest Eigenvalue}
\label{algcut}
\end{algorithm}


The basic greedy subspace procedure Algorithm \ref{alg} for the minimization of the largest eigenvalue $\lambda_1(\omega)$ 
in this finite dimensional matrix-valued setting minimizes  $\lambda^{(k)}_1(\omega)$, which, by the monotonicity property, 
under-estimates $\lambda_1(\omega)$. The sequences $\{ \omega^{(k)} \}$ and $\{ \widehat{\omega}^{(k)} \}$
generated by Algorithm \ref{alg} and Algorithm \ref{algcut} are not the same in general, but, for simplicity,
let us suppose $\omega^{(\ell)} = \widehat{\omega}^{(\ell)}$ for $\ell = 1,\dots,k$ and for some $k$. In this case,
for each $\omega \in \Omega$, we have
 \begin{eqnarray*}
 	\lambda_1(\omega) \;\;  \geq \;\;  \lambda^{(k)}_1(\omega) & \;\; = &  \max_{\alpha \in {\mathbb C}^m, \; \| \alpha \|_2 = 1} \;\; \alpha^\ast V_k^\ast A(\omega) V_k \alpha \\
											& \;\;\geq & \;\; \max_{\ell = 1,\dots,k}	\;\; \left( \widehat{v}^{(\ell)} \right)^\ast A(\omega) \: \widehat{v}^{(\ell)}
												\;\; = \;\; \widehat{\lambda}^{(k)}_1(\omega)
 \end{eqnarray*}
 where the columns of the matrix $V_k$ form an orthonormal basis for the subspace ${\rm span} \left\{ \widehat{v}^{(1)}, \dots, \widehat{v}^{(k)} \right\}$.
This illustrates that, for this special case concerning the minimization of the largest eigenvalue of an affine matrix-valued function, 
 the under-estimators used by the basic greedy subspace procedure are more accurate than those used by the cutting-plane method.

The better accuracy of the subspace procedure is apparent in Figure \ref{fig:support} for the 
matrix-valued function $A(\omega) = A_0 + \omega A_1$,
where $A_0, A_1$ are $10\times 10$ random symmetric matrices. In this figure, the function $\widehat{\lambda}_1^{(2)}(\omega)$ used by the cutting plane 
method is the maximum of two linear approximations about $\widehat{\omega}^{(1)} = 0$ and $\widehat{\omega}^{(2)} = 0.2$, 
while a two dimensional subspace is used by the subspace 
procedure with $\omega^{(1)} = 0$ and $\omega^{(2)} = 0.2$.
 
 \begin{figure}[h]
 		\hskip 4ex
		\begin{tabular}{c}
		\includegraphics[width=.85\textwidth]{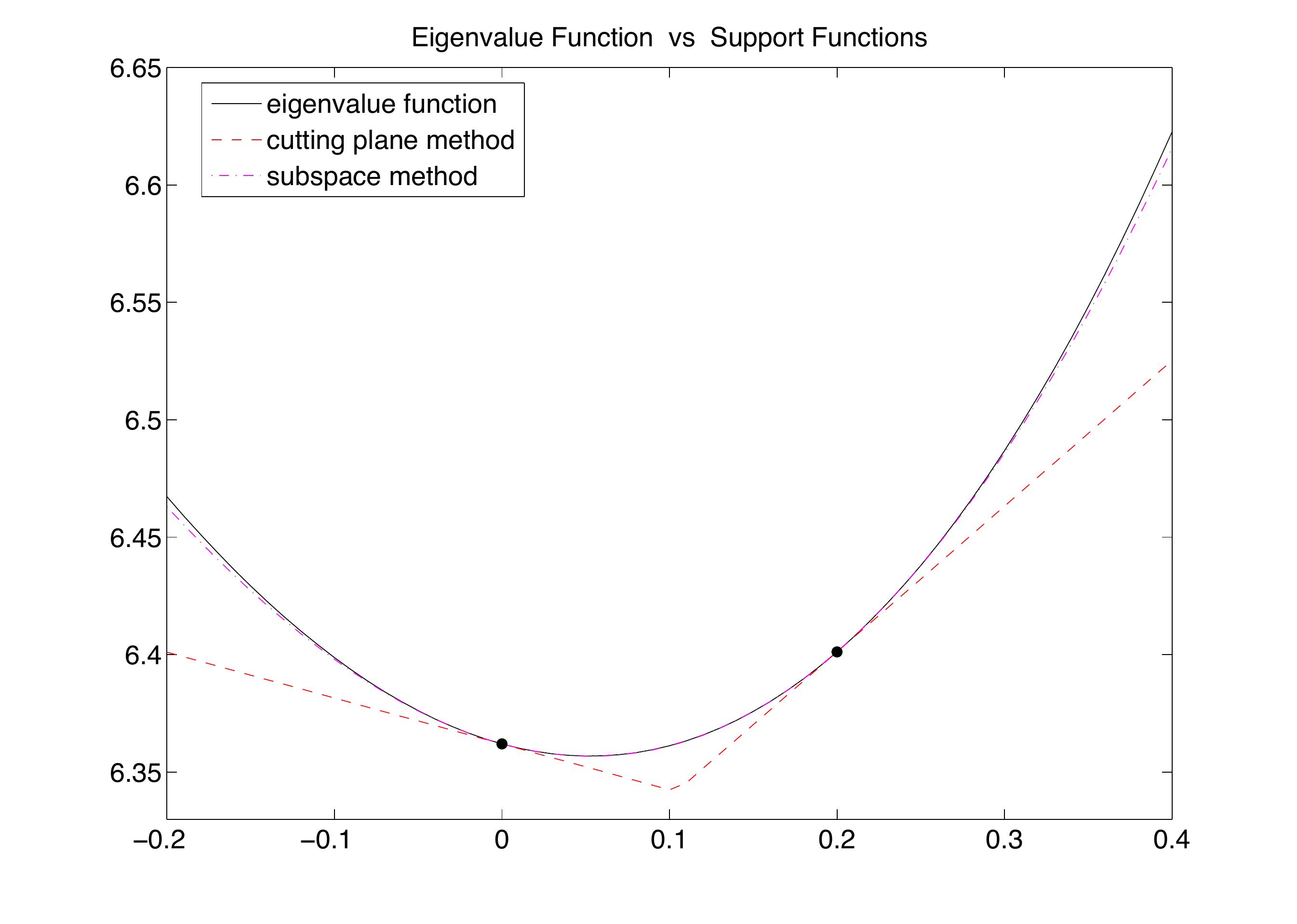} 
		\end{tabular}
	    \caption{ 
	    A comparison of the approximations by the cutting plane method and the subspace procedure:
	    (Solid Curve) The largest eigenvalue of $A(\omega) = A_0 + \omega A_1$ for randomly selected 
	    $10\times 10$ symmetric $A_0, A_1$ with respect to $\omega \in [-0.2,0.4]$;
	    (Dashed Curve) The function $\widehat{\lambda}_1^{(2)}(\omega)$ used by the cutting plane method with $\widehat{\omega}^{(1)} = 0$ and $\widehat{\omega}^{(2)} = 0.2$;
	    (Dashed-Dotted Curve) The function $\lambda_1^{(2)}(\omega)$ used by the subspace procedure with $\omega^{(1)} = 0$ and $\omega^{(2)} = 0.2$;
	    Additionally, the black dots represent $(0, \lambda_1(0))$ and $(0.2, \lambda_1(0.2))$.
	   	     }
	     \label{fig:support} 
\end{figure}


\section{Numerical Experiments}\label{sec:applications}
The next section specifies the Matlab software accompanying this work, and some important implementation
details of this software.  The rest of the section is devoted to three applications of large-scale eigenvalue and 
singular value optimization, namely, the numerical radius, the distance to instability from a matrix or a pencil, 
and the minimization of the largest eigenvalue of an affine matrix-valued function, which is closely related 
to semi-definite programming. On these examples we illustrate the power of the subspace procedures, introduced 
and analyzed in the previous sections, in practice.

\subsection{Software and Implementation Details}
MATLAB implementations accompanying this work are made publicly 
available on the internet\footnote{\texttt{http://home.ku.edu.tr/}$\sim$\texttt{emengi/software/leigopt}}.
The current version includes generic routines for the following purposes (for a prescribed
positive integer $J$):
\begin{itemize}
	\item minimization of the $J$th largest eigenvalue (based on Algorithm \ref{alg});
	\item maximization of the $J$th largest eigenvalue (based on Algorithm \ref{alg} without past if $d = 1$, or
	Algorithm \ref{alge} without past if $d > 1$);
	\item minimization of the $J$th smallest singular value (based on an adaptation of Algorithm \ref{alg} 
	without past if $d = 1$, or Algorithm \ref{alge} without past if $d > 1$, for singular value optimization). 
\end{itemize}

The Matlab routines above terminate if one of the conditions
\[
	\begin{split}
	\textbf{(1)} \; \left|	\lambda_J^{(k)}(\omega^{(k+1)})  -  \lambda_J^{(k-1)}(\omega^{(k)})	\right|	\;  \leq  	\texttt{tol}
	\;\;\: \left( {\rm or} \;\; \left|	\sigma_J^{(k)}(\omega^{(k+1)})  -  \sigma_J^{(k-1)}(\omega^{(k)}) 	\right|	\;  \leq  	\texttt{tol} \right),
				\\
	\textbf{(2)} \; \ \# \text{ subspace iterations } \; > \; \sqrt{n},	\hskip 50.8ex
	\end{split}
\] 
hold for a prescribed tolerance \texttt{tol},
where $n$ is the size of the matrices $A_\ell$ in (\ref{eq:mat_func2}) for eigenvalue optimization,
or the maximum of the dimensions $p$ and $q$ of the matrices $B_\ell$ in (\ref{eq:mat_func2d}) 
for singular value optimization. For all of the experiments in this section $\texttt{tol} = 10^{-12}$ is used 
unless otherwise specified. The second condition is never used in practice for the examples in this
section, because the other condition is fulfilled after a few subspace iterations.

The reduced eigenvalue optimization and singular value optimization problems are solved 
by means of the the MATLAB package \texttt{eigopt} \cite[Section 10]{Mengi2014}. These 
routines keep a lower bound and an upper bound for the optimal value of the reduced problem, 
and terminate when they differ by less than a prescribed tolerance. We set this tolerance
equal to $0.1 \: \texttt{tol}$. Additionally, \texttt{eigopt} requires a global lower (upper) bound $\gamma$
for the minimization (maximization) problem on the second derivatives of the eigenvalue functions 
or the singular value functions, particular choices for the three applications in this section are 
specified below. Finally, \texttt{eigopt} performs the optimization on a box, which must be supplied 
by the user. Once again, particular box choices for the applications in this section are specified below.

Large-scale eigenvalue and singular value problems are solved iteratively by means
of \texttt{eigs} and \texttt{svds} in Matlab. If the optimal values of the reduced eigenvalue or singular
value functions at two consecutive iterations are close enough (i.e., if they differ by an amount
less than $10^{-2}$), then we provide the shift $\lambda_{\rm cur} + 5 | \lambda_{\rm cur} - \lambda_{\rm pre} |$ 
($\sigma_{\rm cur} - 5 | \sigma_{\rm cur} - \sigma_{\rm pre} |$ ) - with 
$\lambda_{\rm cur}, \lambda_{\rm pre}$ ($\sigma_{\rm cur}, \sigma_{\rm pre}$)
denoting the optimal values of the current, previous reduced $J$th largest eigenvalue functions
($J$th smallest singular value functions) - to \texttt{eigs} (\texttt{svds}) 
and require it to compute the $J$ eigenvalues (singular values) closest to this shift.
Otherwise, \texttt{eigs} and \texttt{svds} are called without shifts.

\subsection{Numerical Radius}\label{sec:numrad}
The numerical radius $r(A)$ of a matrix $A \in {\mathbb C}^{n\times n}$, as also indicated at the end of Section \ref{sec:global_convergence},
is the modulus of the outermost point in the field of values \cite{Horn1991, He1997} formally defined by
\[
	r(A) 	\;\; := \;\;	\{ | z^\ast A z | \; | \; z \in {\mathbb C}^n, \; \| z \|_2 = 1 \}.
\]
This quantity is used, for instance, to analyze the convergence of the classical iterative schemes for linear systems \cite{Axelsson1994, Eiermann1993}.
Recall from Section \ref{sec:global_convergence} that it has an eigenvalue optimization characterization, 
specifically 
\[
	r(A)	\;\; = \;\; \max_{\omega \in [0, 2\pi]} \lambda_1(\omega)	\quad\quad {\rm where} \;\;		
			A(\omega)
			\;	=	\;
			\frac{Ae^{i\omega} + A^\ast e^{-i\omega}}{2}.
\]	

We apply Algorithm \ref{alg} without past as discussed in Section \ref{sec:subspace_nopast} for the computation
of the numerical radius of two family of matrices. The box and $\gamma$ supplied to \texttt{eigopt} are $[0, 2\pi]$ and
$2 \| A \|_2$, respectively. The latter is not guaranteed to be an upper bound on the second derivatives of the
eigenvalue function, but it works well in practice in our experience.

\noindent
{\bf Example 1:} The Grcar matrix is a Toeplitz matrix with 1s on the main, first, second, third superdiagonals, -1s on the first subdiagonal,
which exhibits ill-conditioned eigenvalues. It is used as a test matrix in the previous works \cite{He1997, Uhlig2009} concerning the estimation
of the numerical radius. Table \ref{table:numrad_Grcar} lists the computed values of the numerical radius by the subspace procedure
and run-times in seconds for the Grcar matrices of sizes varying between $320 - 20480$.  As the matrix sizes increase, the solution of large-scale 
eigenvalue problems take nearly all of the computation time. In contrast to this, the time spent to solve reduced eigenvalue optimization problems 
is very small and it is more or less constant as the sizes of the matrices increase. The reported values of the numerical radius for
the Grcar matrices of sizes 320 and 640 match with the results reported in \cite{Uhlig2009} up to 12 decimal digits. 

\noindent
{\bf Example 2:} This example concerns the gear matrix which is another Toeplitz matrix with 1s on the superdiagonal, subdiagonal, 1 at position $(1,n)$
and -1 at $(n,1)$. This matrix also has ill-conditioned, real eigenvalues lying in the interval $(-2,2)$. The computed values of the numerical
radius by the subspace procedure for the gear matrix of size $n$ with $n$ varying in $320 - 20480$, as well as the run-times, are given in 
Table \ref{table:numrad_Gearmat}. Once again, the computation time for increasing $n$ is dominated by large-scale eigenvalue computations.
The results reported for the gear matrices of sizes 320 and 640 match again with those reported in \cite{Uhlig2009} up to prescribed accuracy.

\begin{center}
\begin{table}
\hskip 2.5ex
\begin{tabular}{|c||ccccc|}
	\hline
	$n$		&	$\#$ iter		&	$r(A)$					&	total time		&	eigval comp			&	reduced prob	 \\
	\hline
	\hline
	320		&	11				&	3.240793870067		&	8.3 			&	0.8				&	7.4	\\
	640		&	12				&	3.241243679341		&	9.1			&	1.4				&	7.7	\\
	1280		&	13				&	3.241357030535		&	9.7			&	2.5				&	7.2	\\
	2560		&	15				&	3.241385481170		&	10.9			&	4.6				&	6.3	\\
	5120		&	16				&	3.241392607964		&	14.4			&	8.8				&	5.4	\\
	10240	&	18				&	3.241394391431		&	30.8			&	26.6				&	3.9	\\
	20480	&	19				&	3.241394837519		&	87.4			&	82.8				&	4.0	\\
	\hline
\end{tabular}


\caption{ \noindent The computed value of the numerical radius by Algorithm \ref{alg}
without past (i.e, the subspaces ${\mathcal S}_k$ are spanned by the eigenvectors at the last two iterations, hence
two dimensional), the number of subspace iterations and the run-times in seconds are listed for the Grcar matrix of size $n$ for various $n$. 
Specifically, the number of subspace iterations (2nd column) to reach the prescribed accuracy, the numerical radius (3rd column),
the total run-time (4th column), the time spent for large-scale eigenvalue computations (5th column) 
and the time spent for the solution of the 
reduced eigenvalue optimization problems (6th column) are reported w.r.t. the sizes of the matrices.
}
\label{table:numrad_Grcar}
\vskip -3ex
\end{table}
\end{center}

\begin{center}
\begin{table}
\hskip 2.5ex
\begin{tabular}{|c||ccccc|}
	\hline
	$n$		&	$\#$ iter		&	$r(A)$				&	total time				&	eigval comp		&	reduced prob	 \\
	\hline
	\hline
	320		&	5				&	1.999904217490		&	 5.2			&	0.4			&	4.8	\\
	640		&	5				&	1.999975979457		&	5.5			&	0.8			&	4.6	\\
	1280		&	6				&	1.999993985476		&	6.2			&	1.2			&	5.0	\\
	2560		&	5				&	1.999998495194		&	5.8			&	1.9			&	3.9	\\
	5120		&	5				&	1.999999623651		&	6.3			&	3.0			&	3.3	\\
	10240	&	5				&	1.999999905895		&	7.7			&	5.6			&	2.1	\\
	20480	&	5				&	1.999999976471		&	16.4			&	14.6			&	1.6	\\
	\hline
\end{tabular}


\caption{ \noindent This table lists the computed value of the numerical radius by 
Algorithm \ref{alg} without past (i.e, the subspaces ${\mathcal S}_k$ are spanned by the eigenvectors at the last two iterations, hence
two dimensional) for the gear matrix of size $n$ for various $n$, as well as the number of subspace iterations, and run-times in seconds. 
For the meaning of each column see the caption of Table \ref{table:numrad_Grcar}.
}
\label{table:numrad_Gearmat} 
\vskip -5ex
\end{table}
\end{center}

\subsection{Distance to Instability}\label{sec:distinstab}
\vskip -4ex
The distance to instability from a square matrix $A \in {\mathbb C}^{n\times n}$ with all eigenvalues
on the open left half of the complex plane, defined by
\[
	{\mathcal D}(A) \;\; := \;\;
		\min
		\{
			\| \Delta \|_2	\; | \;
			\exists \omega \in {\mathbb R},	\;\;	\det (A + \Delta - \omega {\rm i} I)	=	0			
		\},
\]
is suggested in \cite{VanLoan1985} as a measure of robust stability for the autonomous system
$x'(t) = Ax(t)$. An application of the Eckart-Young theorem \cite[Theorem 2.5.3]{Golub1996} yields the characterization
\[
	{\mathcal D}(A) \;\; = \;\;	\min_{\omega \in {\mathbb R}}	\;	\sigma_{-1} ( \omega )	
\]
where $\sigma_{-1}(\omega)$ denotes the smallest singular value of $B(\omega) = A - \omega {\rm i} I$.

Here also, we adopt Algorithm \ref{alg} without past (see Section \ref{sec:subspace_nopast}), 
noting that at step $k+1$ the two dimensional right subspace is the span of the right singular vectors 
corresponding to $\sigma_{-1}(\omega^{(k)})$ and $\sigma_{-1}(\omega^{(k-1)})$.
We illustrate numerical results on two family of sparse matrices. We set $\gamma$,
the global lower bound for the second derivatives of the singular value function for \texttt{eigopt},
equal to $-2 \| A \|_2$, which is a heuristic that works well in practice. 
The boxes supplied to \texttt{eigopt} are $[-60,60]$ and $[150, 160]$
for the first and second family, respectively. These boxes indeed contain the global minimizers.

\noindent
\textbf{Example 3: } Tolosa matrices arise from the stability analysis of an airplane. They are used as test
examples in previous works \cite{He1998, Freitag2011} concerning the computation of the distance 
to instability. These are stable matrices with all eigenvalues lying in the left half of the complex plane,
but they are nearly unstable (see Figure 4 of \cite{Freitag2011} for the spectrum of the $340\times 340$
Tolosa matrix), indeed their perturbations at a distance 0.002 have eigenvalues on the right half plane.
We run the subspace procedure on the Tolosa matrices of size 340, 1090, 2000, 4000, which are all
available through the matrix market \cite{Boisvert}. According to Table \ref{table:distinstab_Tolosa}
only four iterations suffice to reach prescribed accuracy. The time required for large-scale singular
value computations increases with respect to the size of the matrices. However the majority of the time 
is consumed for the solution of the reduced problems, but this is only because the matrices are relatively small.
The computed value of ${\mathcal D}(A)$ is the same in each case and match with the result
reported in \cite{Freitag2011} up to prescribed accuracy. \

\begin{center}
\begin{table}
\hskip 2.5ex
\begin{tabular}{|c||ccccc|}
	\hline
	$n$		&	$\#$ iter		&	${\mathcal D}(A)$		&	total time			&	singval comp			&	reduced prob	 \\
	\hline
	\hline
	340		&	4			&	0.001999796888		&	9.1 			&	1.0				&	8.0	\\
	1090		&	4			&	0.001999796888		&	9.7			&	1.4				&	8.2	\\
	2000		&	4			&	0.001999796888		&	9.9			&	1.8				&	8.1	\\
	4000		&	4			&	0.001999796888		&	10.3			&	2.7				&	7.5	\\
	\hline
\end{tabular}


\caption{ \noindent The run-times in seconds, number of iterations (2nd column) and computed values of ${\mathcal D}(A)$ (3rd column) 
are listed for Algorithm \ref{alg} without past applied to compute the distance to instability from the Tolosa matrices of size 340, 1090, 2000, 4000.
The subspaces ${\mathcal S}_k$ are two dimensional and spanned by the right singular vectors computed at the last two 
iterations. The total run-times, the time for large-scale singular value computations and the time for the reduced optimization problems
in seconds are provided in the 4th, 5th and 6th column, respectively. 
}
\label{table:distinstab_Tolosa}
\vskip -5ex
\end{table}
\end{center}

\noindent
\textbf{Example 4: } This example is taken from \cite{He1998}. A finite difference discretization of an Orr-Sommerfeld 
operator with step-size $h$ for planar Poiseuille flow leads to an $n\times n$ generalized eigenvalue problem $B_n v = \lambda L_n v$
or an $n\times n$ standard eigenvalue problem  $L_n^{-1} B_n v = \lambda v$ where  $n = 2/h - 1$,
\[
	\begin{split}
	L_n	\;	& =	\;	\frac{1}{h^2} {\rm tridiag} (1, -(2 +  h^2), 1)	\\		
	B_n	\;	& =	\;	\frac{1}{100} L_n^2	-	{\rm i} (U_n L_n + 2I),
	\end{split}
\]
and $U_n = {\rm diag} (1 - x_1^2, \dots, 1 - x_n^2)$ with $x_k = 1 + k\cdot h$ for $k = 1,\dots, n$. Matrix
$A_n = L_n^{-1} B_n$ is stable with eigenvalues on the left half plane, yet nearly unstable.

We apply the subspace procedure for the computation of the distance to instability for the Orr-Sommerfeld matrix 
$A_n$ of size $n = 400, 1000, 2000, 4000, 8000, 16000$. Note that $A_n$ is not sparse, yet 
applications of Arnoldi's method for large-scale smallest
singular value computations on $A_n - \omega {\rm i} I$ require the solutions of the linear systems of the form
$(A_n - \omega {\rm i} I) v_{k+1} = v_k$ for $v_{k+1}$ for a given $v_k$. We equivalently solve
the sparse linear system 
\begin{equation}\label{eq:OS_linsys}
	(B_n -\omega {\rm i}  L_n) v_{k+1} = L_n v_k
\end{equation}
in practice.  In Table \ref{table:distinstab_OrrSommerfeld}
we again observe that the total computation time is dominated by large-scale singular value computations as $n$ increases,
whereas the contribution of the time for the solution of the reduced problems to the total running time is very little for large $n$.
The computed values of ${\mathcal D}(A)$ in the table are listed only to six decimal digits, because for large $n$ \texttt{eigs}
could not compute singular values beyond 7-8 decimal digits in a reliable fashion. We attribute this to the fact that the norm
of $B_n$ and $L_n$ increase considerably as $n$ increases, so it is not possible to solve the linear system (\ref{eq:OS_linsys})
with high accuracy, for instance $\| B_{16000} \|_2 \approx 6.467 \cdot 10^{13}$. For smaller $n$ the computed solutions
are accurate up to 12 decimal digits, for instance the computed value of the distance to instability for $A_{400}$
by the subspace procedure is 0.001978172281 which differ from the result reported in  \cite{Freitag2011}
by an amount less than $10^{-12}$.

\begin{center}
\begin{table}
\hskip 5ex
\begin{tabular}{|c||ccccc|}
	\hline
	$n$		&	$\#$ iter		&	${\mathcal D}(A)$		&	total time			&	singval comp	&	reduced prob	 \\
	\hline
	\hline
	400		&	9			&	0.001978		&	5.0			&	0.9				&	4.1	\\
	1000		&	8			&	0.001978		&	5.3			&	1.7				&	3.4	\\
	2000		&	7			&	0.001978		&	5.9			&	3.0				&	2.8	\\
	4000		&	8			&	0.001978		&	7.9			&	5.5				&	2.3	\\
	8000		&	8			&	0.001979		&	15.2			&	11.7				&	3.4	\\
	16000	&	7			&	0.001938		&	30.8			&	27.7				&	2.9	\\
	\hline
\end{tabular}


\caption{ \noindent The number of iterations, computed values of the distance to instability and run-times are given for 
Algorithm \ref{alg} without past applied to estimate the distance to instability from the Orr-Sommerfeld matrices. 
Once again the subspaces are always two dimensional and spanned by the right singular vectors 
computed at the last two iterations. For the meaning of each column we refer to the caption of Table \ref{table:distinstab_Tolosa}.
}
\label{table:distinstab_OrrSommerfeld}
\vskip -5ex
\end{table}
\end{center}

\subsection{Minimization of the Largest Eigenvalue}\label{sec:min_large_eig} 
\vskip -4ex
A problem that drew substantial interest late 1980s and early 1990s \cite{Overton1988, Fan1995}
concerns the minimization of the largest eigenvalue of $\lambda_1(\omega)$ of
\begin{equation}\label{eq:affine_matrix}
	A(\omega)
				:=
	A_0	+ \omega_1 A_1 + \dots	+ \omega_d A_d
\end{equation}
for given symmetric matrices $A_0, \dots, A_d \in {\mathbb R}^{n\times n}$. This problem is already discussed
in Section \ref{sec:rate_of_convergence} in the more general case, when $A_0, \dots, A_d$ are complex and Hermitian.
Numerical results over there on random matrices indicate that the sequences generated by both the basic subspace 
procedure and the extended one converge at least superlinearly. An important application is in the context of semidefinite 
programming: under mild assumptions the dual of a semidefinite program can be expressed as an unconstrained 
minimization problem with the objective function $\lambda_1(\omega) + b^T \omega$ for some $b \in {\mathbb R}^d$ 
and with $\lambda_1(\omega)$ denoting the largest eigenvalue of a matrix-valued
function $A(\omega)$ of the form (\ref{eq:affine_matrix}). 

We apply the subspace procedures Algorithm \ref{alg} and Algorithm \ref{alge} for a particular notoriously difficult 
family of matrix-valued functions depending on two parameters. In this example the subspaces from the previous iterations 
are kept fully.  Note that the Matlab software is based on Algorithm \ref{alg}, additionally we apply 
Algorithm \ref{alge} for comparison purposes.  As for the parameters for \texttt{eigopt}, since the largest eigenvalue 
function is convex, $\gamma$  (the global lower bound on the second derivatives of the eigenvalue function) in 
theory can be chosen zero, instead we set $\gamma = -10^{-6}$ for numerical reliability. We have specified the box 
containing the minimizer as $[-10,10]\times [-10, 10]$.

\noindent
\textbf{Example 5:} In \cite{Overton1988}, for $A_0 \in {\mathbb R}^{n\times n}$ with its $(k,j)$ entry equal to
\[
	 \begin{cases}
                  \min \{ k , j\} & \text{if $|k - j | > 1$} \\
                  \min \{ k , j\} + 0.1 & \text{if $|k-j| = 1$}	\\
                  0	&	k = j
          \end{cases} \;\; ,
\]
the spectral radius (i.e., the absolute value of the eigenvalue furthest away from the origin) 
of $C_n(\omega) = A_0 - \sum_{j=1}^d \omega_j e_j e_j^T$ is minimized. It is observed
in that paper on the $n= 10$ case that at the optimal $\omega$, the eigenvalue with the
largest modulus has multiplicity three. This problem would correspond to a semidefinite
program relaxation of a max-cut problem, if $A_0$ had been a Laplace matrix of a graph \cite[Section 7]{Helmberg2000}.

Here we minimize the spectral radius of 
\begin{equation}\label{eq:minlargesteig}
	\widetilde{C}_n(\omega) = \frac{1}{100 n} A_0 - \omega_1 I_u - \omega_2 I_l
\end{equation}
for $n = 250, 500, 1000, 2000$, where $I_u = {\rm diag} (I_{n/2}, 0_{n/2})$ and $I_l = {\rm diag} (0_{n/2}, I_{n/2})$.
The scaling in front of $A_0$ is to make sure that the unique minimizer of the problem is inside $[-10, 10]\times [-10,10]$.
This problem can equivalently be posed as the minimization of the largest eigenvalue of
$A(\omega) := {\rm diag} (\widetilde{C}_n(\omega), -\widetilde{C}_n(\omega))$, so fits within the 
problem class described by (\ref{eq:affine_matrix}).
Table \ref{table:specrad} lists the minimal spectral radius values computed by the basic and extended
subspace procedures along with number of subspace iterations and computation time.
The total computation times are again dominated by the solutions of large eigenvalue problems.
Furthermore, even though the basic subspace procedure usually requires more iterations to reach
the prescribed accuracy, overall it solves fewer large eigenvalue problems and takes less computation time
as compared to the extended subspace procedure.

In all cases the computed minimizer $\omega_\ast$ is such that the largest eigenvalue of $A(\omega_\ast)$ 
has algebraic multiplicity three, so $\lambda_1(\omega)$ is not differentiable at $\omega_\ast$.
For instance for $n = 500$, that is when the matrix-valued function $A(\omega)$ is of size 1000,
the five largest eigenvalues of $A(\omega_\ast)$ are listed in Table \ref{table:specrad_extra}
on the left. Even the gaps between the remaining 497 positive eigenvalues are very small, as indeed among 500 
positive eigenvalues 495 of them lie in an interval of length 0.017. The fact that most of the eigenvalues belong to a small interval  
causes poor convergence properties for \texttt{eigs}. On the other hand, it appears that the nonsmoothness
does not affect the superlinear convergence of the iterates $\{ \lambda_1^{(k)}(\omega^{(k+1)}) \}$, as depicted 
in Table \ref{table:specrad_extra} on the right. This quick convergence is also apparent from the number of 
subspace iterations in Table \ref{table:specrad}. Theorem \ref{thm:super_convergence} does not apply to
this nonsmooth case. It is an open problem to come up with a formal argument explaining the quick convergence
in this nonsmooth setting.

\begin{center}
\begin{table}
\hskip 1ex
\begin{tabular}{|c||cccccc|}
	\hline
	$n$		&	$\#$ iter		&	$p$		&	$\rho_\ast$		&	total time			&	eigval comp	&	reduced prob 	 \\
	\hline
	\hline
	250		&	7			&	$\:$7$\:$	&	0.509646245274		&	3.8			&	1.5	&	2.2			\\
	500		&	7			&	7	&	1.016261471669		&	4.8 			&	2.8	&	1.6		\\
	1000		&	8			&	8	&	3.584040976076		&	13.9			&	11.8	&	1.2		 \\
	2000		&	7			&	7	&	4.055903987776		&	68.7			&	65.8	&	0.7		\\
	\hline
\end{tabular}


\hskip 1ex
\begin{tabular}{|c||cccccc|}
	\hline
	$n$		&	$\#$ iter		&	$p$	&	$\rho_\ast$		&	total time			&	eigval comp	&	reduced prob 	 \\
	\hline
	\hline
	250		&	6			&	24	&	0.509646245274		&	4.7			&	2.8		&	1.6		\\
	500		&	6			&	24	&	1.016261471669		&	7.6 			&	5.4		&	1.1		\\
	1000		&	7			&	28	&	3.584040976076		&	22.1			&	17.8		&	1.0		 \\
	2000		&	7			&	28	&	4.055903987776		&	115.9		&	106.7	&	0.8		\\
	\hline
\end{tabular}


\caption{ \noindent The table lists the number of subspace iterations, the dimension of the subspace ${\mathcal S}_k$ at termination ($p$),
computational times in seconds and the computed minimal value ($\rho_\ast$) of the spectral radius when Algorithm \ref{alg} (on the top)
and Algorithm \ref{alge} (at the bottom) are applied to minimize the largest eigenvalue of
$A(\omega) = {\rm diag} (\widetilde{C}_n(\omega), -\widetilde{C}_n(\omega))$ for the matrix-valued function
$\widetilde{C}_n(\omega)$ defined as in (\ref{eq:minlargesteig}). Note that $A(\omega)$ whose largest eigenvalue
is minimized is of size $2n$. Also note that the subspaces from all of the previous iterations are kept in these example.
}
\label{table:specrad}
\vskip -3ex
\end{table}
\end{center}

\begin{center}
\begin{table}

\begin{tabular}{cc}

\hskip 14ex

\begin{tabular}{c}

Eigenvalues of $A(\omega_\ast)$ \\
\hline
1.016261471669	\\
1.016261471669	\\
1.016261471669	\\
1.016234975093	\\
1.016234803766	\\
\end{tabular}
\hskip 4ex
&
\hskip 4ex

\begin{tabular}{c}
					\\
\begin{tabular}{|c||c|}
\hline
$k$	&	$\lambda_1^{(k)}(\omega^{(k+1)})$		\\
\hline
3	&	1.016260414001 	\\
4	&	1.016261417096	\\
5	&	1.016261471669	\\
6	&	1.016261471669	\\
\hline
\end{tabular}	
\end{tabular}	

\end{tabular}

\caption{ \noindent (Left) The five largest eigenvalues of 
$A(\omega) = {\rm diag} (\widetilde{C}_{500}(\omega), -\widetilde{C}_{500}(\omega))$ 
at $\omega_\ast$ where $\widetilde{C}_n(\omega)$ defined as in (\ref{eq:minlargesteig}) and
$\omega_\ast$ is the minimizer of the largest eigenvalue of $A(\omega)$ for $n = 500$.
(Right) The last four iterates $\{ \lambda_1^{(k)}(\omega^{(k+1)}) \}$ of  
Algorithm \ref{alg} (which keeps all of the subspaces from the previous iterations)
when the large eigenvalue computations are performed directly
by calling \texttt{eig} in MATLAB.
}
\label{table:specrad_extra}
\vskip -5ex
\end{table}
\end{center}

\section{Concluding Remarks}\label{sec:conclusion} 
\vskip -3ex
We have proposed subspace procedures to cope with large-scale eigenvalue and singular value optimization
problems. To optimize the $J$th largest eigenvalue of a Hermitian and analytic matrix-valued function $A(\omega)$ 
over $\omega$ for a prescribed integer $J$, the subspace procedures operate on a small matrix-valued function that 
acts like $A(\omega)$ in a small subspace. The subspace is expanded with the addition of the eigenvectors of 
$A(\omega)$ at the optimizer of the eigenvalue function of the small matrix-valued function and, possibly, at nearby points.
A similar strategy is adopted to optimize the $J$th largest singular value of an analytic matrix-valued function
$B(\omega)$. In that context, it is advantageous to use two different subspace restrictions
on the input and the output to $B(\omega)$ so that the resulting small matrix-valued function 
acts like the original one only in these small input and output spaces. The subspaces are
expanded with the inclusion of the left and right singular vectors of $B(\omega)$ at the
optimizers of the small problems and at nearby points. The optimization of the $J$th
smallest singular value involves some subtlety, here it seems suitable to apply restrictions
only on the input to $B(\omega)$, which is equivalent to the frameworks for eigenvalue optimization 
applied to $B(\omega)^\ast B(\omega)$.  The preferred subspace procedures for particular cases 
are summarized in the table below.

\footnotesize
\begin{center}
\begin{tabular}{|p{2.7cm}||p{9.5cm}|}
\hline
\begin{center} \textbf{Problem}	\end{center}	&	\begin{center} \textbf{Preferred Subspace Procedure} \end{center}	 \\
\hline
\hline
\begin{center}  \textbf{(1)} $\; \min_{\omega \in \Omega} \lambda_J(\omega)$ \end{center}		&		\begin{center} Algorithm \ref{alg} \end{center}	\\
\hline
\begin{center} \textbf{(2)} $\; \max_{\omega \in \Omega} \lambda_J(\omega)$ \end{center}	&		\begin{center} Algorithm \ref{alge} (Algorithm \ref{alg}) without past if $d > 1$ (if $d=1$) \end{center} \\ 
\hline
\begin{center} \textbf{(3)} $\;\: \min_{\omega \in \Omega} \sigma_J(\omega)$	\end{center}	&		\begin{center} Algorithm \ref{algse}(b)  \end{center}		\\
\hline
\begin{center} \textbf{(4)} $\;\: \max_{\omega \in \Omega} \sigma_J(\omega)$  \end{center}		&		\begin{center} Algorithm \ref{algse} (Algorithm \ref{algse}(b)) without past if $d > 1$ (if $d=1$)  \end{center} \\ 
\hline
\begin{center} \textbf{(5)} $ \min_{\omega \in \Omega} \sigma_{-J}(\omega)$ \end{center}	&		\begin{center} Algorithm \ref{alge}(s) (Algorithm \ref{alg}(s)) without past if $d > 1$ (if $d = 1$) \end{center} \\ 
\hline
\begin{center} \textbf{(6)} $ \max_{\omega \in \Omega} \sigma_{-J}(\omega)$ \end{center}	&		\begin{center} Algorithm \ref{alg}(s) \end{center}	\\
\hline
\end{tabular}
\end{center}
\normalsize

In the table, Algorithm \ref{algse}(b) refers to the basic greedy procedure for singular value optimization that uses two-sided
projections, i.e., Algorithm \ref{algse} but with lines 11 and 12 replaced by (\ref{eq:algsvdb}).
Additionally, Algorithm \ref{alg}(s) and Algorithm \ref{alge}(s) refer to the adaptations of 
Algorithm \ref{alg} and Algorithm \ref{alge} for singular value optimization, which form the subspace from the right singular 
vectors rather than the eigenvectors. Note that the minimization and maximization of a $J$th smallest eigenvalue are not 
listed in the table, since they can be posed as the maximization and minimization of a $J$th largest eigenvalue, respectively.

We have performed convergence and rate-of-convergence analyses for these subspace procedures
by extending $A(\omega)$ and $B(\omega)$ to infinite dimension, so by replacing them with compact
operators ${\mathbf A}(\omega) : \ell^2({\mathbb N}) \rightarrow \ell^2({\mathbb N})$ and 
${\mathbf B}(\omega) : \ell^2({\mathbb N}) \rightarrow \ell^2({\mathbb N})$, former of which is also
self-adjoint. Most remarkably, Theorem \ref{thm:global_conv} establishes global convergence for
Algorithms \ref{alg}-\ref{alge} when the $J$th largest eigenvalue is minimized, and
Theorem \ref{thm:super_convergence} establishes a superlinear rate-of-convergence 
for Algorithm \ref{alg} when $d = 1$ and Algorithm \ref{alge} for minimizing and
maximizing the $J$th largest eigenvalue. The superlinear convergence result is established 
under the simplicity assumption on the $J$th largest eigenvalue at the optimizer, even though we observe 
superlinear convergence in numerical experiments (e.g., see Example 5 in Section \ref{sec:min_large_eig}) 
where this simplicity assumption is violated. The convergence results do also extend
to Algorithm \ref{algse} for the optimization
of the $J$th largest singular value. The convergence properties of the proposed subspace frameworks 
for the six problems in the table above are as follows: 
 \textbf{(1)$\&$(3)$\&$(6)} Proven global convergence at a proven (observed) superlinear rate if $d = 1$ ($d > 1$);
 \textbf{(2)$\&$(4)$\&$(5)} Proven convergence but necessarily to a global solution, observed local convergence 
 at a proven superlinear rate. 

Two additional problems where the subspace procedures and their convergence analyses developed here may be applicable are
large-scale sparse estimation problems and semidefinite programs. For instance, sparse estimation problems that can be 
cast as nuclear norm minimization problems \cite{Recht2010} seem worth exploring because of their connection with
singular value optimization. As noted in the text, the dual of a standard semidefinite program can often be cast as
an eigenvalue optimization problem involving the minimization of the largest eigenvalue. 
A systematic integration of the subspace frameworks proposed here for eigenvalue optimization into large-scale
semidefinite programs, motivated by their theoretical convergence properties, is a direction that is worth investigating.

\noindent
\textbf{Acknowledgement.} We are grateful to the two anonymous referees, Daniel Szyld and Daniel Kressner 
for valuable comments on this manuscript.

\bibliography{largescale_eigopt}

\end{document}